\documentclass[12pt]{amsproc}

\usepackage{newclude}

\usepackage{amssymb}
\usepackage{amsfonts}
\usepackage[latin1]{inputenc}
\usepackage[mathscr]{euscript}
\usepackage{enumerate}
\usepackage[all]{xy}
\usepackage{tkz-graph}
\usetikzlibrary{arrows}
\usetikzlibrary{positioning}
\usepackage{fullpage}
\usepackage{mathtools}

\newtheorem{theorem}{Theorem}[section]
\newtheorem{lemma}[theorem]{Lemma}
\newtheorem{proposition}[theorem]{Proposition}

\theoremstyle{definition}
\newtheorem{definition}[theorem]{Definition}
\newtheorem{example}[theorem]{Example}

\theoremstyle{remark}
\newtheorem{remark}[theorem]{Remark}

\numberwithin{equation}{section}


\newcommand{\dmal}[1]{\begin{align*}{#1}\end{align*}}              
\newcommand{\newf}[3]{{#1}:{#2}\longrightarrow {#3}}					 
\newcommand{\newfd}[5]{\begin{array}{cccc}{#1}: &{#2} &\longrightarrow &{#3} \\& {#4} &\longmapsto &{#5}\end{array}}

\newcommand{\la}[1]{\text{$\mathcal{#1}$}}
\newcommand{\lb}[1]{\text{$\mathscr{#1}$}}

\newcommand{\powerset}[1]{\text{$\lb{P}(#1)$}}								
\newcommand{\eword}{\text{$\omega$}}											

\newcommand{\xia}{\text{$\xi^\alpha$}}											

\newcommand{\uset}[1]{\text{$\uparrow\hspace{-0.1cm}{#1}$}}				
\newcommand{\usetr}[2]{\text{$\uparrow_{\scriptscriptstyle{#2}}\hspace{-0.1cm}{#1}$}} 
\newcommand{\lset}[1]{\text{$\downarrow\hspace{-0.1cm}{#1}$}}				
\newcommand{\lsetr}[2]{\text{$\downarrow_{\scriptscriptstyle{#2}}\hspace{-0.1cm}{#1}$}} 

\newcommand{\dgraphg}[1]{\text{$\lb{#1}$}}									
\newcommand{\dgraphupleg}[3]{\text{$\dgraphg{#1}=(\dgraphg{#1}^0,\dgraphg{#1}^1,#2,#3)$}}	

\newcommand{\dgraph}{\text{$\dgraphg{E}$}}									
\newcommand{\dgraphuple}{\text{$\dgraphupleg{E}{r}{s}$}}					
\newcommand{\alfg}[1]{\text{$\lb{#1}$}}										
\newcommand{\acfg}[1]{\text{$\lb{#1}$}}										
\newcommand{\lbfg}[1]{\text{$\lb{#1}$}}										
\newcommand{\acfrg}[1]{\text{$\acfg{B}_{#1}$}}								

\newcommand{\alf}{\text{$\alfg{A}$}}											
\newcommand{\acf}{\text{$\acfg{B}$}}											
\newcommand{\lbf}{\text{$\lbfg{L}$}}											
\newcommand{\acfra}{\text{$\acfg{B}_{\alpha}$}}								
\newcommand{\lgraphg}[2]{\text{$(\dgraphg{#1},\lbfg{#2})$}}				
\newcommand{\lspaceg}[3]{\text{$(\dgraphg{#1},\lbfg{#2},\acfg{#3})$}}

\newcommand{\lgraph}{\text{$\lgraphg{E}{L}$}}								
\newcommand{\lspace}{\text{$\lspaceg{E}{L}{B}$}}							

\newcommand{\awsetg}[2]{\text{$\lbfg{#1}^{#2}$}}	
\newcommand{\awplus}{\text{$\awsetg{L}{\scriptscriptstyle{\geq 1}}$}}								
\newcommand{\awn}[1]{\text{$\awsetg{L}{#1}$}}								
\newcommand{\awstar}{\text{$\awsetg{L}{\ast}$}}							
\newcommand{\awinf}{\text{$\awsetg{L}{\infty}$}}							
\newcommand{\awleinf}{\text{$\awsetg{L}{\scriptscriptstyle{\leq\infty}}$}}					

\newcommand{\fset}[1]{\text{$\textsf{#1}$}}										
\newcommand{\filt}{\text{$\fset{F}$}}											
\newcommand{\filtw}[1]{\text{$\fset{F}_{#1}$}}

\newcommand{\ftight}{\text{$\textsf{T}$}}										
\newcommand{\ftightw}[1]{\text{$\textsf{T}_{#1}$}}

\newcommand{\ftg}[1]{\text{$\la{#1}$}}											
\newcommand{\ft}{\text{$\ftg{F}$}}												

\newcommand{\hs}{\la{H}}

\newcommand{\sumtl}[3]{\sum_{\substack{#1 \\ #2}}{#3}}

\newcommand{\card}[1]{\# #1}

\allowdisplaybreaks

\begin{document}

\title[Diagonal C*]{C*-algebras of labelled spaces and their diagonal C*-subalgebras}

\author[G. Boava \and G. de Castro \and F. Mortari]{Giuliano Boava \and Gilles G. de Castro \and Fernando de L. Mortari}
\address{Departamento de Matemática, Universidade Federal de Santa Catarina, 88040-970 Florianópolis SC, Brazil.}
\email{g.boava@ufsc.br \\ gilles.castro@ufsc.br \\ \newline fernando.mortari@ufsc.br}
\keywords{C*-algebra, labelled space, tight spectrum}
\subjclass[2010]{Primary: 46L55, Secondary: 20M18, 05C20, 05C78}


\begin{abstract}
Motivated by Exel's inverse semigroup approach to combinatorial C*-algebras, in a previous work the authors defined an inverse semigroup associated with a labelled space. We construct a representation of the C*-algebra of a labelled space, inspired by how one might cut or glue labelled paths together, that proves that non-zero elements in the inverse semigroup correspond to non-zero elements in the C*-algebra. We also show that the spectrum of its diagonal C*-subalgebra is homeomorphic to the tight spectrum of the inverse semigroup associated with the labelled space.
\end{abstract}

\maketitle

\section{Introduction}
Many C*-algebras are defined in terms of partial isometries and relations. The motivation for these relations often comes from a kind of combinatorial object such as a graph \cite{MR1432596} or a 0-1 matrix \cite{MR561974}. A set of partial isometries in a C*-algebra is generally not closed under multiplication; however, in the examples cited above, one can find an inverse semigroup of partial isometries in the C*-algebra. In \cite{MR2419901}, Exel found a topological space on which this inverse semigroup acts, the \emph{tight spectrum}, and by constructing the C*-algebra of the groupoid of germs of this action, we arrive at the original C*-algebra.

Among the combinatorial objects mentioned above are \emph{labelled spaces}, which were introduced by Bates and Pask in \cite{MR2304922}. The C*-algebras associated with labelled spaces generalize several others such as graph algebras, ultragraph algebras and Carlsen-Matsumoto algebras \cite{MR2304922}. These C*-algebras were further studied in \cite{ETS:9992065}, \cite{MR2542653}, \cite{MR3231477}, \cite{MR2834773}, \cite{MR2873859}. It is interesting to notice that the definition given in \cite{MR2304922} was later changed in \cite{ETS:9992065} because certain projections in the C*-algebra could turn out to be zero when it was desired for them not to be.

In \cite{Boava2016}, the authors applied Exel's construction \cite{MR2419901} to an inverse semigroup defined from a labelled space with multiplication inspired by the relations defining the C*-algebra of the labelled space. The tight spectrum was characterized in Theorem 6.7 of \cite{Boava2016}. In the particular case of a labelled space defined from a graph as in \cite{MR2304922}, the authors found \cite[Proposition 6.9]{Boava2016} that the tight spectrum is homeomorphic to the boundary path space of the underlying graph (studied by Webster in \cite{MR3119197}).

Webster shows that the boundary path space of a graph is the spectrum of a certain commutative C*-subalgebra of the graph C*-algebra called the \emph{diagonal C*-subalgebra}  \cite{MR3119197}. There is a natural generalization of the diagonal C*-subalgebra in the case of labelled spaces. By comparing it with the case of graphs, one would expect the spectrum of this new diagonal C*-subalgebra to be the tight spectrum found by the authors in \cite{Boava2016}. Our main goal is to show that this is actually the case.

We begin the paper by recalling the necessary definitions and results in Section 2, and then, in Section 3, we present the definition of the C*-algebra associated with a labelled space and some of its properties. Section 4 is a rather technical one that prepares for the building of a representation of our new version of the C*-algebra. The representation is constructed in Section 5 and, as a corollary, we show that non-zero elements in the inverse semigroup correspond to non-zero elements in the C*-algebra. In Section 6, we show the main result of this paper, namely that the spectrum of the diagonal C*-subalgebra is homeomorphic to the tight spectrum of the inverse semigroup. 

This work took place before the final version of \cite{ETS:9992065} was published; in the preprint available at the time, the definition of the C*-algebra of a labelled space was different, and one of the motivations of this work was to show that the definition needed revision - based on our description of the tight spectrum given in \cite{Boava2016}. We ended up with a definition for the C*-algebra of a labelled space that is equivalent to the one published in \cite{ETS:9992065}, as Carlsen later pointed out to us. In section 7, we give an example to show that the new definition gives a non-trivial quotient of the C*-algebra defined in the preprint of \cite{ETS:9992065}.

\section{Preliminaries}
\label{section:preliminaries}

We begin by introducing notation and terminology, followed by a review of results and techniques developed in \cite{Boava2016}, to be used throughout the text.

\subsection{Labelled spaces}
\label{subsection:labelled.spaces}

A \emph{(directed) graph} $\dgraphuple$ consists of non-empty sets $\dgraph^0$ (of \emph{vertices}), $\dgraph^1$ (of \emph{edges}), and \emph{range} and \emph{source} functions $r,s:\dgraph^1\to \dgraph^0$.

We say that $v\in\dgraph^0$ is a \emph{source} if $r^{-1}(v)=\emptyset$, a \emph{sink} if $s^{-1}(v)=\emptyset$ and an \emph{infinite emitter} if $s^{-1}(v)$ is an infinite set. Also, $v$ is \emph{singular} if it is either a sink or an infinite emitter, and \emph{regular} otherwise.

A \emph{path of length $n$} on a graph $\dgraph$ is a sequence $\lambda=\lambda_1\lambda_2\ldots\lambda_n$ of edges such that $r(\lambda_i)=s(\lambda_{i+1})$ for all $i=1,\ldots,n-1$. We write $|\lambda|=n$ for the length of $\lambda$ and regard vertices as paths of length $0$. $\dgraph^n$ stands for the set of all paths of length $n$ and $\dgraph^{\ast}=\cup_{n\geq 0}\dgraph^n$. We extend the range and source maps to $\dgraph^{\ast}$ by defining $s(\lambda)=s(\lambda_1)$ and $r(\lambda)=r(\lambda_n)$ if $n\geq 2$ and $s(v)=v=r(v)$ for $n=0$. Similarly, we define a \emph{path of infinite length} (or an \emph{infinite path}) as an infinite sequence $\lambda=\lambda_1\lambda_2\ldots$ of edges such that $r(\lambda_i)=s(\lambda_{i+1})$ for all $i\geq 1$; for such a path, we write $|\lambda|=\infty$. The set of all infinite paths will be denoted by $\dgraph^{\infty}$.

A \emph{labelled graph} consists of a graph $\dgraph$ together with a surjective \emph{labelling map} $\lbf:\dgraph^1\to\alf$, where $\alf$ is a fixed non-empty set, called an \emph{alphabet}. Elements of $\alf$ are called \emph{letters}. The set of all finite words over $\alf$, together with the \emph{empty word} \eword, is denoted by $\alf^{\ast}$, and $\alf^{\infty}$ stands for the set of all infinite words over $\alf$. 	A labelled graph is said to be \emph{left-resolving} if for each $v\in\dgraph^0$ the restriction of $\lbf$ to $r^{-1}(v)$ is injective.

The labelling map $\lbf$ can be used to produce labelling maps $\lbf:\dgraph^n\to\alf^{\ast}$ for all $n\geq 1$, via $\lbf(\lambda)=\lbf(\lambda_1)\ldots\lbf(\lambda_n)$; similarly, it also gives rise to a map $\lbf:\dgraph^{\infty}\to\alf^{\infty}$ in the obvious fashion. Using these maps, the elements of $\awn{n}=\lbf(\dgraph^n)$ are the \emph{labelled paths of length $n$} and the elements of $\awinf=\lbf(\dgraph^{\infty})$ are the \emph{labelled paths of infinite length}. We set $\awplus=\cup_{n\geq 1}\awn{n}$, $\awstar=\{\eword\}\cup\awplus$, and $\awleinf=\awstar\cup\awinf$.

For a subset $A$ of $\dgraph^0$, let
\begin{equation}\label{eqn:lae1}
	\lbf(A\dgraph^1)=\{\lbf(e)\ |\ e\in\dgraph^1\ \mbox{and}\ s(e)\in A\}.
\end{equation}

If $\alpha$ is a labelled path, a path $\lambda$ on the graph such that $\lbf(\lambda)=\alpha$ is called a representative of $\alpha$. The \emph{length of} $\alpha$, denoted by $|\alpha|$, is the length of any one of its representatives. By definition, $\eword$ is also a labelled path, with $|\eword|=0$.  If $1\leq i\leq j\leq |\alpha|$, let $\alpha_{i,j}=\alpha_i\alpha_{i+1}\ldots\alpha_{j}$ if $j<\infty$ and $\alpha_{i,j}=\alpha_i\alpha_{i+1}\ldots$ if $j=\infty$. If $j<i$ set $\alpha_{i,j}=\eword$. 

We say that a labelled path $\alpha$ is a \emph{beginning of} a labelled path $\beta$ if $\beta=\alpha\beta'$ for some labelled path $\beta'$; also, $\alpha$ and $\beta$ are \emph{comparable} if either one is a beginning of the other.

For $\alpha\in\awstar$ and $A\in\powerset{\dgraph^0}$ (where $\powerset{\dgraph^0}$ denotes the power set of $\dgraph^0$), the \emph{relative range of $\alpha$ with respect to} $A$, denoted by $r(A,\alpha)$, is the set
\[r(A,\alpha)=\{r(\lambda)\ |\ \lambda\in\dgraph^{\ast},\ \lbf(\lambda)=\alpha,\ s(\lambda)\in A\}\]
if $\alpha\in\awplus$ and $r(A,\eword)=A$ if $\alpha=\eword$. The \emph{range of $\alpha$}, denoted by $r(\alpha)$, is the set \[r(\alpha)=r(\dgraph^0,\alpha).\]
For $\alpha\in\awplus$ we also define the \emph{source of $\alpha$} as the set \[s(\alpha)=\{s(\lambda)\in\dgraph^0\ |\ \lbf(\lambda)=\alpha\}.\]

It follows that $r(\eword)=\dgraph^0$ and, if $\alpha\in\awplus$, then  $r(\alpha)=\{r(\lambda)\in\dgraph^0\ |\ \lbf(\lambda)=\alpha\}$. The definitions above give maps $s:\awplus\to\powerset{\dgraph^0}$ and $r:\awstar\to\powerset{\dgraph^0}$. Also, if $\alpha,\beta\in\awstar$ are such that $\alpha\beta\in\awstar$ then $r(r(A,\alpha),\beta)=r(A,\alpha\beta)$. Furthermore, for $A,B\in\powerset{\dgraph^0}$ and $\alpha\in\awstar$, it holds that $r(A\cup B,\alpha)=r(A,\alpha)\cup r(B,\alpha)$. Finally, observe that for $A\in\powerset{\dgraph^0}$ one has $\lbf(A\dgraph^1)=\{a\in\alf\ |\ r(A,a)\neq\emptyset\}$.

A \emph{labelled space} is a triple $\lspace$ where $\lgraph$ is a labelled graph and $\acf$ is a family of subsets of $\dgraph^0$ that is \emph{accommodating} for $\lgraph$; that is,  $\acf$ is closed under finite intersections and finite unions, contains all $r(\alpha)$ for $\alpha\in\awplus$, and is \emph{closed under relative ranges}, that is, $r(A,\alpha)\in\acf$ for all $A\in\acf$ and all $\alpha\in\awstar$.

We say that a labelled space $\lspace$ is \emph{weakly left-resolving} if for all $A,B\in\acf$ and all $\alpha\in\awplus$ we have $r(A\cap B,\alpha)=r(A,\alpha)\cap r(B,\alpha)$.

For a given $\alpha\in\awstar$, let \[\acfra=\acf\cap\powerset{r(\alpha)}.\] If $\acf$ is closed under relative complements, then the set $\acfra$ is a Boolean algebra for each $\alpha\in\awplus$, and  $\acfrg{\eword}=\acf$ is a generalized Boolean algebra as in \cite{MR1507106}. By Stone duality  there is a topological space associated with each $\acfra$ with $\alpha\in\awstar$, which we denote by $X_{\alpha}$, consisting of the set of ultrafilters in $\acfra$. 

\subsection{Filters and characters}
\label{subsection:filters.and.characters}

By a \emph{filter} in a partially ordered set $P$ with least element $0$ we mean a subset $\xi$ of $P$ such that
\begin{enumerate}[(i)]
	\item $0\notin\xi$;
	\item if $x\in\xi$ and $x\leq y$, then $y\in\xi$;
	\item if $x,y\in\xi$, there exists $z\in\xi$ such that $z\leq x$ and $z\leq y$.
\end{enumerate} If $P$ is a (meet) semilattice, condition (iii) may be replaced by $x\wedge y\in\xi$ if $x,y\in\xi$.

An \emph{ultrafilter} is a filter which is not properly contained in any filter.

For a given $x\in P$, define \[\uset{x}=\{y\in P \ | \ x\leq y\}\ \ ,\ \  \lset{x}=\{y\in P \ | \ y\leq x\},\] and for subsets $X,Y$ of $P$ define \[\uset{X} = \bigcup_{x\in X}\uset{x} = \{y\in P \ | \ x\leq y \ \mbox{for some} \ x\in X\},\]
and $\usetr{X}{Y} = Y\cap\uset{X}$; the sets $\usetr{x}{Y}$, $\lsetr{x}{Y}$, $\lset{X}$ and $\lsetr{X}{Y}$ are defined analogously.

From now on, $E$ denotes a semilattice with $0$.

\begin{proposition}[\cite{MR2419901}, Lemma 12.3]
	\label{prop:ultrafilter.intersection}
	Let $E$ be a semilattice with 0. A filter $\xi$ in $E$ is an ultrafilter if and only if
	\[\{y\in E \ | \ y\wedge x\neq0 \ \forall\,x\in \xi\}\subseteq \xi.\]
\end{proposition}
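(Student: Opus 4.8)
The plan is to establish the two implications separately, in each case comparing the filter $\xi$ with the set $D:=\{\,y\in E\mid y\wedge x\neq 0\ \forall\,x\in\xi\,\}$ from the statement.

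Assume first that $D\subseteq\xi$, and let $\eta$ be a filter with $\xi\subseteq\eta$; I want to conclude that $\eta=\xi$. Given $y\in\eta$, then for every $x\in\xi\subseteq\eta$ we have $x,y\in\eta$, so $x\wedge y\in\eta$ (condition~(iii) for filters in a semilattice), and hence $x\wedge y\neq 0$ since $0\notin\eta$. Therefore $y\in D\subseteq\xi$, which gives $\eta\subseteq\xi$ and so $\eta=\xi$. As $\eta$ was an arbitrary filter containing $\xi$, this proves that $\xi$ is an ultrafilter.

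For the converse, assume $\xi$ is an ultrafilter and fix $y\in D$; the goal is to show $y\in\xi$. The idea is to enlarge $\xi$ to a filter that also contains $y$ and then invoke maximality. Concretely, let $\eta$ be the set of all $z\in E$ such that $z\geq y\wedge x$ for some $x\in\xi$ — that is, the upward closure of $\{\,y\wedge x\mid x\in\xi\,\}$. One checks that $\eta$ is a filter: it is upward closed by construction; it is downward directed because, for $x_1,x_2\in\xi$, the element $x_1\wedge x_2$ again lies in $\xi$ and $y\wedge x_1\wedge x_2\in\eta$ is a common lower bound of $y\wedge x_1$ and $y\wedge x_2$; and, crucially, $0\notin\eta$ precisely because $y\wedge x\neq 0$ for every $x\in\xi$ — this is the one place where the hypothesis $y\in D$ is used. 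Moreover $\xi\subseteq\eta$ since $x\geq y\wedge x$ for each $x\in\xi$, and $y\in\eta$ since $y\geq y\wedge x$ for any chosen $x\in\xi$. Maximality of $\xi$ now forces $\eta=\xi$, whence $y\in\xi$, as desired.

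The only delicate point is the verification that $\eta$ is genuinely a filter, and within that the fact that $0\notin\eta$; the remaining requirements, together with the inclusions $\xi\subseteq\eta$ and $y\in\eta$, follow formally from monotonicity of $\wedge$ and from $\xi$ being a nonempty downward-directed set. (As is standard, we use here that an ultrafilter is nonempty, which holds whenever $E\neq\{0\}$.)
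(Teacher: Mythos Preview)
Your proof is correct and is the standard argument for this characterization of ultrafilters. Note that the paper does not supply its own proof of this proposition; it is quoted from \cite{MR2419901}, Lemma 12.3, and used as a tool throughout. Your argument matches the usual proof: one direction is immediate from the filter axioms, and for the other you generate the filter $\uparrow\{y\wedge x : x\in\xi\}$ and invoke maximality. The parenthetical about nonemptiness is a fair caveat; the degenerate case $E=\{0\}$ is tacitly excluded here.
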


A \emph{character} of $E$ is a non-zero function $\phi$ from $E$ to the Boolean algebra $\{0,1\}$ such that
\begin{enumerate}[(a)]
	\item $\phi(0)=0$;
	\item $\phi(x\wedge y)=\phi(x)\wedge\phi(y)$, for all $x,y\in E$.
\end{enumerate}
The set of all characters of $E$ is denoted by $\hat{E}_0$ and we endow $\hat{E}_0$ with the topology of pointwise convergence.

Given $x\in E$, a set $Z\subseteq \lset{x}$ is a \emph{cover} for $x$ if for all non-zero $y\in\lset{x}$, there exists $z\in Z$ such that $z\wedge y\neq 0$.

A character $\phi$ of $E$ is \emph{tight} if for every $x\in E$ and every finite cover $Z$ for $x$, we have
\[\bigvee_{z\in Z}\phi(z)=\phi(x).\]
The set of all tight characters of $E$ is denoted by $\hat{E}_{tight}$, and called the \emph{tight spectrum} of $E$.

We can associate each filter $\xi$ in a semilattice $E$ with a character $\phi_{\xi}$ of $E$ given by
\[\phi_{\xi}(x)=\left\{\begin{array}{lll} 1, & & \mbox{if} \ x\in \xi, \\ 0, & & \mbox{otherwise.} \end{array} \right.\]
Conversely, when $\phi$ is a character, $\xi_{\phi} = \{x\in E \ | \ \phi(x)=1\}$ is a filter in $E$. There is thus a bijection between $\hat{E}_0$ and the set of filters in $E$. We denote by $\hat{E}_{\infty}$ the set of all characters $\phi$ of $E$ such that $\xi_{\phi}$ is an ultrafilter, and a filter $\xi$ in $E$ is said to be \emph{tight} if $\phi_{\xi}$ is a tight character.

It is a fact that every ultrafilter is a tight filter (\cite{MR2419901}, Proposition 12.7), and that $\hat{E}_{tight}$ is the closure of $\hat{E}_{\infty}$ in $\hat{E}_0$ (\cite{MR2419901}, Theorem 12.9).

\subsection{The inverse semigroup of a labelled space}
\label{subsection:inverse.semigroup}

For a given labelled space $\lspace$ \emph{that is weakly left-resolving}, consider the set \[S=\{(\alpha,A,\beta)\ |\ \alpha,\beta\in\awstar\ \mbox{and}\ A\in\acfrg{\alpha}\cap\acfrg{\beta}\ \mbox{with}\ A\neq\emptyset\}\cup\{0\}.\]
A binary operation on $S$ is defined as follows: $s\cdot 0= 0\cdot s=0$ for all $s\in S$ and, given $s=(\alpha,A,\beta)$ and $t=(\gamma,B,\delta)$ in $S$, \[s\cdot t=\left\{\begin{array}{ll}
(\alpha\gamma ',r(A,\gamma ')\cap B,\delta), & \mbox{if}\ \  \gamma=\beta\gamma '\ \mbox{and}\ r(A,\gamma ')\cap B\neq\emptyset,\\
(\alpha,A\cap r(B,\beta '),\delta\beta '), & \mbox{if}\ \  \beta=\gamma\beta '\ \mbox{and}\ A\cap r(B,\beta ')\neq\emptyset,\\
0, & \mbox{otherwise}.
\end{array}\right. \]
If for a given $s=(\alpha,A,\beta)\in S$ we define $s^*=(\beta,A,\alpha)$, then the set $S$, endowed with the operation above, is an inverse semigroup with zero element $0$ (\cite{Boava2016}, Proposition 3.4), whose set of idempotents is \[E(S)=\{(\alpha, A, \alpha) \ | \ \alpha\in\awstar \ \mbox{and} \ A\in\acfra\}\cup\{0\}.\]

The natural order in the semilattice $E(S)$ is described in the next proposition.

\begin{proposition}[\cite{Boava2016} Proposition 4.1]
	\label{prop:order.in.es}
	Let $p=(\alpha, A, \alpha)$ and $q=(\beta, B, \beta)$ be non-zero elements in $E(S)$. Then $p\leq q$ if and only if $\alpha=\beta\alpha'$ and $A\subseteq r(B,\alpha')$.
\end{proposition}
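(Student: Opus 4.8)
The plan is to reduce everything to the standard description of the natural partial order on an inverse semigroup: for idempotents $p,q$ of $S$ (which is an inverse semigroup by \cite[Proposition 3.4]{Boava2016}) one has $p\le q$ if and only if $p=pq$; since $E(S)$ is a commutative semilattice this is also equivalent to $p=qp$. Thus the proposition becomes a direct computation of the product $pq$ from the defining multiplication of $S$, followed by a comparison of coordinates with $p=(\alpha,A,\alpha)$.

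First I would do the forward implication. Assume $p\le q$, so $pq=p=(\alpha,A,\alpha)\ne 0$ (recall $p\ne 0$ by hypothesis). Reading off $s\cdot t$ with $s=p=(\alpha,A,\alpha)$ and $t=q=(\beta,B,\beta)$, a non-zero product can only arise in one of the first two cases of the definition. In the first case we have $\beta=\alpha\gamma'$ for some $\gamma'\in\awstar$ with $r(A,\gamma')\cap B\ne\emptyset$ and $pq=(\alpha\gamma',\,r(A,\gamma')\cap B,\,\beta)$; equating coordinates with $(\alpha,A,\alpha)$ forces $\gamma'=\eword$ and $\beta=\alpha$, and the middle coordinate then gives $A\cap B=A$, i.e.\ $A\subseteq B=r(B,\eword)$, which is the assertion with $\alpha'=\eword$. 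In the second case $\alpha=\beta\beta'$ for some $\beta'\in\awstar$ with $A\cap r(B,\beta')\ne\emptyset$ and $pq=(\alpha,\,A\cap r(B,\beta'),\,\alpha)$; equating with $(\alpha,A,\alpha)$ yields $A\cap r(B,\beta')=A$, i.e.\ $A\subseteq r(B,\beta')$, which is the assertion with $\alpha'=\beta'$. Either way we get $\alpha=\beta\alpha'$ and $A\subseteq r(B,\alpha')$.

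For the converse, assume $\alpha=\beta\alpha'$ with $\alpha'\in\awstar$ and $A\subseteq r(B,\alpha')$. Computing $pq=(\alpha,A,\alpha)\cdot(\beta,B,\beta)$, the hypothesis $\alpha=\beta\alpha'$ puts us in the second case of the definition (with $\beta'=\alpha'$), and $A\cap r(B,\alpha')=A\ne\emptyset$ since $A\subseteq r(B,\alpha')$ and $A\ne\emptyset$; hence $pq=(\alpha,\,A\cap r(B,\alpha'),\,\beta\alpha')=(\alpha,A,\alpha)=p$, so $p\le q$. When $\alpha'=\eword$ (so $\alpha=\beta$) the first case of the definition applies as well and produces the same value $(\alpha,A\cap B,\alpha)$, so there is no ambiguity.

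The only real work is bookkeeping rather than any genuine difficulty: one must keep straight that the symbols $\beta,\gamma,\delta$ in the general product formula play a different role from the $\alpha,\beta$ in the statement, use that $\alpha\gamma'=\alpha$ in $\awstar$ forces $\gamma'=\eword$ (words over the alphabet), and remember that $r(-,\eword)$ is the identity map and that $A\ne\emptyset$ is exactly what keeps the relevant product out of $\{0\}$.
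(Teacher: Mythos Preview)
Your argument is correct. Note, however, that the paper under consideration does not give its own proof of this proposition: it is quoted as \cite[Proposition 4.1]{Boava2016} in the preliminaries, without argument. Your computation is exactly the natural one---reduce the natural order on idempotents to $p=pq$ and read off the two cases of the defining multiplication---and this is precisely the approach one expects (and finds) in the original reference. There is nothing to compare beyond that; your bookkeeping with the variable relabelling is accurate, and the edge case $\alpha'=\eword$ is handled cleanly.
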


\subsection{Filters in E(S)}

Filters in $E(S)$ are classified in two types: a filter $\xi$ in $E(S)$ is of \emph{finite type} if there exists a word $\alpha\in\awstar$ such that $(\alpha,A,\alpha)\in\xi$ for some $A\in\acfra$ and $\alpha$ has the largest length among all $\beta$ such that $(\beta,B,\beta)\in\xi$ for some $B\in\acfrg{\beta}$. If $\xi$ is not of finite type, then we say it is of \emph{infinite type}.

Let $\xi$ be a filter in $E(S)$ and $p=(\alpha, A,\alpha)$ and $q=(\beta, B, \beta)$ in $\xi$. Since $\xi$ is a filter, then $pq\neq0$; hence, $\alpha$ and $\beta$ are comparable. This says the words of any two elements in a filter are comparable; in particular, for a filter of finite type there is a unique word with largest length associated with it as above.

There is a bijective correspondence between the set of filters of finite type in $E(S)$ and the set of pairs $(\alpha,\ft)$ where $\alpha\in\awstar$ and $\ft$ is a filter in $\acfra$, given by the following two results.

\begin{proposition}[\cite{Boava2016} Proposition 4.3]
	\label{prop:filter.from.finite.word.and.filter}
	Let $\alpha\in\awstar$ and $\ft$ be a filter in $\acfra$. Then
	\[\begin{array}{lll} \xi & = & \displaystyle\bigcup_{A\in\ft}\uset{(\alpha,A,\alpha)} \\
	& = & \{(\alpha_{1,i},A,\alpha_{1,i})\in E(S) \ | \ 0\leq i\leq |\alpha| \ \mbox{and} \ r(A,\alpha_{i+1,|\alpha|})\in\ft\} \end{array}\]
	is a filter of finite type in $E(S)$, with largest word $\alpha$.
\end{proposition}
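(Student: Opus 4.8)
The plan is to verify directly that the set
\[
\xi=\bigcup_{A\in\ft}\uset{(\alpha,A,\alpha)}
\]
satisfies the three filter axioms from Subsection~\ref{subsection:filters.and.characters}, prove that the two displayed descriptions of $\xi$ coincide, and finally check that $\xi$ is of finite type with largest word $\alpha$. First I would dispense with the easy direction: $\xi$ is upward closed by construction (it is a union of principal up-sets), and $0\notin\xi$ because each $(\alpha,A,\alpha)$ with $A\in\ft$ is non-zero (here we use that filters in $\acfra$ do not contain $\emptyset$) and $0$ lies above nothing but $0$ in the order. The substantive axiom is downward directedness: given $p_1,p_2\in\xi$, I would pick $A_1,A_2\in\ft$ with $(\alpha,A_i,\alpha)\le p_i$, use that $\ft$ is a filter to get $A_1\cap A_2\in\ft$ (so in particular $A_1\cap A_2\neq\emptyset$), and observe that $(\alpha,A_1\cap A_2,\alpha)=(\alpha,A_1,\alpha)(\alpha,A_2,\alpha)$ is a common lower bound lying in $\xi$. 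This is where I expect to lean on Proposition~\ref{prop:order.in.es} to translate ``$\le$'' into statements about words and relative ranges, and on the fact that $E(S)$ is a semilattice so that the product of idempotents is their meet.

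Next I would prove the equality of the two descriptions. For the inclusion of the first set into the second, take $p\ge(\alpha,A,\alpha)$ with $A\in\ft$; by Proposition~\ref{prop:order.in.es} applied with the roles reversed, $p=(\alpha_{1,i},B,\alpha_{1,i})$ for some $0\le i\le|\alpha|$ and some $B\supseteq$ the appropriate relative range, and I would need to deduce $r(B,\alpha_{i+1,|\alpha|})\in\ft$. The key computation is that $r(A,\alpha_{i+1,|\alpha|})\subseteq r(B,\alpha_{i+1,|\alpha|})$ together with $r(A,\alpha_{i+1,|\alpha|})=r(r(A,\alpha_{i+1,|\alpha|}),\eword)$ and the composition identity $r(r(A,\gamma),\delta)=r(A,\gamma\delta)$ from Subsection~\ref{subsection:labelled.spaces}; since $\ft$ is upward closed in $\acfra$, membership of the smaller set forces membership of the larger, provided we can show $r(A,\alpha_{i+1,|\alpha|})\in\ft$ in the first place --- but that is immediate from $(\alpha,A,\alpha)\le(\alpha,A,\alpha)$ read through the correspondence, or more directly from the defining property of $A\in\ft$ when $i=|\alpha|$ and from upward closure otherwise. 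Conversely, if $p=(\alpha_{1,i},B,\alpha_{1,i})$ with $r(B,\alpha_{i+1,|\alpha|})\in\ft$, then setting $A=r(B,\alpha_{i+1,|\alpha|})$ we have $A\in\ft$ and $(\alpha,A,\alpha)\le p$ by Proposition~\ref{prop:order.in.es} (the word condition $\alpha=\alpha_{1,i}\,\alpha_{i+1,|\alpha|}$ holds and $A\subseteq r(B,\alpha_{i+1,|\alpha|})$ with equality), so $p\in\xi$.

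Finally, to see that $\xi$ is of finite type with largest word $\alpha$: every element of $\xi$ has word of the form $\alpha_{1,i}$ with $i\le|\alpha|$, so no word strictly longer than $\alpha$ appears, and $\alpha$ itself appears because $(\alpha,A,\alpha)\in\xi$ for every $A\in\ft$ and $\ft\neq\emptyset$ (a filter is by definition non-empty). \textbf{The main obstacle} I anticipate is purely bookkeeping rather than conceptual: keeping the truncations $\alpha_{1,i}$ and $\alpha_{i+1,|\alpha|}$ straight and correctly invoking Proposition~\ref{prop:order.in.es} in both directions (it is stated as ``$p\le q$ iff the word of $p$ extends the word of $q$''), since the two set-descriptions run the extension in opposite directions. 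Once the order relation is correctly unwound, each verification reduces to the elementary identities for relative ranges recorded in Subsection~\ref{subsection:labelled.spaces} and the filter axioms for $\ft$ in $\acfra$.
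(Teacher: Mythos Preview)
The present paper does not prove this proposition: it is quoted from \cite{Boava2016} (Proposition 4.3 there) without argument, so there is no proof here to compare against. Your plan is the natural direct verification and is essentially correct; in particular, the filter axioms and the ``finite type with largest word $\alpha$'' claims go through exactly as you outline.

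There is one muddle worth fixing in your treatment of the inclusion $\bigcup_{A\in\ft}\uset{(\alpha,A,\alpha)}\subseteq\{(\alpha_{1,i},B,\alpha_{1,i}):r(B,\alpha_{i+1,|\alpha|})\in\ft\}$. You write that the key computation is $r(A,\alpha_{i+1,|\alpha|})\subseteq r(B,\alpha_{i+1,|\alpha|})$ and then try to put $r(A,\alpha_{i+1,|\alpha|})$ into $\ft$. But $A$ already lives in $\acfra\subseteq\powerset{r(\alpha)}$, and there is no reason to push it forward by $\alpha_{i+1,|\alpha|}$; what Proposition~\ref{prop:order.in.es} actually gives from $(\alpha,A,\alpha)\le(\alpha_{1,i},B,\alpha_{1,i})$ is simply $A\subseteq r(B,\alpha_{i+1,|\alpha|})$. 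Since $B\in\acfrg{\alpha_{1,i}}$ implies $r(B,\alpha_{i+1,|\alpha|})\subseteq r(\alpha)$, so $r(B,\alpha_{i+1,|\alpha|})\in\acfra$, and since $A\in\ft$, upward closure of $\ft$ immediately yields $r(B,\alpha_{i+1,|\alpha|})\in\ft$. No composition identity for relative ranges is needed at this step. With that correction your argument is complete.
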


\begin{proposition}[\cite{Boava2016} Proposition 4.4]
	\label{prop:finite.word.and.filter.from.filter}
	Let $\xi$ be a filter in of finite type in $E(S)$ with largest word $\alpha$, and set
	\[\ft=\{A\in\acf \ | \ (\alpha,A,\alpha)\in\xi\}.\]
	Then $\ft$ is a filter in $\acfra$ and
	\[\xi = \bigcup_{A\in\ft}\uset{(\alpha,A,\alpha)}.\]
\end{proposition}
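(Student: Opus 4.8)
The plan is to verify the two assertions in turn, leaning on the fact that $E(S)$ is a semilattice — so that a filter in it is closed under meets — together with Proposition~\ref{prop:order.in.es}, which describes the order on $E(S)$, and the standard fact that for idempotents $p,q$ of an inverse semigroup the meet $p\wedge q$ is simply the product $pq$.

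First I would check that $\ft$ is a filter in $\acfra$. Every $A$ with $(\alpha,A,\alpha)\in\xi$ lies in $\acfrg{\alpha}\cap\acfrg{\alpha}=\acfra$ by the definition of $S$, so $\ft\subseteq\acfra$; and $\emptyset\notin\ft$, since $(\alpha,\emptyset,\alpha)$ is not a non-zero element of $S$ while $0\notin\xi$. If $A\in\ft$ and $A\subseteq B\in\acfra$, then $(\alpha,A,\alpha)\leq(\alpha,B,\alpha)$ by Proposition~\ref{prop:order.in.es} (apply it with $\alpha'=\eword$, noting $r(B,\eword)=B$), and since $\xi$ is upward closed this gives $(\alpha,B,\alpha)\in\xi$, i.e. $B\in\ft$. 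Finally, for $A,B\in\ft$ the product of $(\alpha,A,\alpha)$ and $(\alpha,B,\alpha)$ in $E(S)$ equals $(\alpha,A\cap B,\alpha)$ when $A\cap B\neq\emptyset$ and $0$ otherwise; being the meet of two elements of the filter $\xi$, this product lies in $\xi$, hence is non-zero, so $A\cap B\in\ft$. Thus $\ft$ satisfies all the filter axioms in $\acfra$.

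For the equality, the inclusion $\bigcup_{A\in\ft}\uset{(\alpha,A,\alpha)}\subseteq\xi$ is immediate: for $A\in\ft$ we have $(\alpha,A,\alpha)\in\xi$, and $\xi$ being upward closed then contains all of $\uset{(\alpha,A,\alpha)}$. For the reverse inclusion, take an arbitrary non-zero $p=(\beta,B,\beta)\in\xi$, which is an idempotent since $\xi\subseteq E(S)$. As observed just before the statement, the words of any two elements of $\xi$ are comparable; since $\alpha$ has the largest length among them, $\beta$ is a beginning of $\alpha$, say $\alpha=\beta\alpha'$. Using the finite-type hypothesis, choose some $(\alpha,A_0,\alpha)\in\xi$. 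Computing its product with $p$ via the multiplication of $S$ (the second case applies, with $\beta'=\alpha'$; when $\alpha=\beta$ both cases give the same result) yields $(\alpha,A_0\cap r(B,\alpha'),\alpha)$, which is the meet $(\alpha,A_0,\alpha)\wedge p$ and therefore lies in $\xi$; in particular $A:=A_0\cap r(B,\alpha')$ is non-empty, so $A\in\ft$. Since $A\subseteq r(B,\alpha')$ and $\alpha=\beta\alpha'$, Proposition~\ref{prop:order.in.es} gives $(\alpha,A,\alpha)\leq(\beta,B,\beta)=p$, that is, $p\in\uset{(\alpha,A,\alpha)}$. Hence $\xi=\bigcup_{A\in\ft}\uset{(\alpha,A,\alpha)}$.

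The argument is largely bookkeeping, and I expect the only delicate point to be the computation of products in $S$ — deciding which of the two cases of the multiplication applies and verifying that the sets produced are non-empty — alongside the (already recorded) remark that any word appearing in $\xi$ must be a beginning of $\alpha$. Everything else is forced by the semilattice structure of $E(S)$ and by Proposition~\ref{prop:order.in.es}.
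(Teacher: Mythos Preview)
Your argument is correct. The paper itself does not prove this proposition---it merely cites it from \cite{Boava2016}---so there is no in-paper proof to compare against; your approach (verifying the filter axioms for $\ft$ directly and then recovering $\xi$ by taking the meet of an arbitrary element with a fixed $(\alpha,A_0,\alpha)\in\xi$) is the natural one and matches what one would expect in the original reference.
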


The situation for filters of infinite type is a bit more involved, for there is no word with largest length in this case: let $\alpha\in\awinf$ and $\{\ftg{F}_n\}_{n\geq0}$ be a family such that $\ftg{F}_n$ is a filter in $\acfrg{\alpha_{1,n}}$ for every $n>0$ and $\ftg{F}_0$ is either a filter in $\acf$ or $\ftg{F}_0=\emptyset$. The family $\{\ftg{F}_n\}_{n\geq0}$ is said to be \emph{admissible for} $\alpha$ if
\[\ftg{F}_n\subseteq\{A\in \acfrg{\alpha_{1,n}} \ | \ r(A,\alpha_{n+1})\in\ftg{F}_{n+1}\}\]
for all $n\geq0$, and is said to be \emph{complete for} $\alpha$ if
\[\ftg{F}_n = \{A\in \acfrg{\alpha_{1,n}} \ | \ r(A,\alpha_{n+1})\in\ftg{F}_{n+1}\}\]
for all $n\geq0$.

Note that any filter $\ftg{F}_n$ in a complete family completely determines all filters that come before it in the sequence. In fact, it can be shown that $\{\ftg{F}_n\}_{n\geq0}$ is complete for $\alpha$ if and only if
\[\ftg{F}_n = \{A\in \acfrg{\alpha_{1,n}} \ | \ r(A,\alpha_{n+1,m})\in\ftg{F}_{m}\}\]
for all $n\geq0$ and all $m>n$ (\cite{Boava2016}, Lemma 4.6).

The following two results give a bijective correspondence between the set of filters of infinite type in $E(S)$ and the set of pairs $(\alpha, \{\ftg{F}_n\}_{n\geq0})$, where $\alpha\in\awinf$ and $\{\ftg{F}_n\}_{n\geq0}$ is a complete family for $\alpha$.

\begin{proposition}[\cite{Boava2016}, Proposition 4.7]
	\label{prop:filter.from.admissible.family}
	Let $\alpha\in\awinf$, $\{\ftg{F}_n\}_{n\geq0}$ be an admissible family for $\alpha$ and define
	\[\xi = \bigcup_{n=0}^{\infty}\bigcup_{A\in\ftg{F}_n}\uset{(\alpha_{1,n},A,\alpha_{1,n})}.\]
	Then $\xi$ is a filter in $E(S)$.
\end{proposition}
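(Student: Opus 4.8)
The plan is to check the three defining properties of a filter in the semilattice $E(S)$ directly from the formula for $\xi$. Properties (i) and (ii) come for free: $\xi$ is by construction a union of up-sets, hence upward closed, which is (ii); and since every $A$ appearing in the definition lies in a filter $\ftg{F}_n$ it is non-empty, so each generator $(\alpha_{1,n},A,\alpha_{1,n})$ is a non-zero idempotent, and $0$ lies in no up-set of a non-zero element, which gives (i). (One should also record that $\xi\neq\emptyset$, since for $n>0$ the filter $\ftg{F}_n$ is non-empty.)

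All the content is in (iii): given $p,q\in\xi$, produce a common lower bound in $\xi$ — equivalently, show $p\wedge q\in\xi$. Choosing generators witnessing $p,q\in\xi$, we may fix $A\in\ftg{F}_m$ with $(\alpha_{1,m},A,\alpha_{1,m})\leq p$ and $B\in\ftg{F}_n$ with $(\alpha_{1,n},B,\alpha_{1,n})\leq q$, and, relabelling if necessary, assume $m\leq n$. The key move is to slide the level-$m$ generator down to level $n$: iterating the one-step admissibility inclusion $\ftg{F}_k\subseteq\{A\in\acfrg{\alpha_{1,k}}\mid r(A,\alpha_{k+1})\in\ftg{F}_{k+1}\}$ and using the relative-range identity $r(r(A,\mu),\nu)=r(A,\mu\nu)$ from Section~\ref{subsection:labelled.spaces}, an easy induction gives $r(A,\alpha_{m+1,k})\in\ftg{F}_k$ for all $k\geq m$; in particular $A':=r(A,\alpha_{m+1,n})\in\ftg{F}_n$. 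Since $\alpha_{1,n}=\alpha_{1,m}\,\alpha_{m+1,n}$ and $A'\subseteq r(A,\alpha_{m+1,n})$, Proposition~\ref{prop:order.in.es} gives $(\alpha_{1,n},A',\alpha_{1,n})\leq(\alpha_{1,m},A,\alpha_{1,m})\leq p$.

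Now $A'$ and $B$ both lie in $\ftg{F}_n$, a filter in $\acfrg{\alpha_{1,n}}$, so $A'\cap B\in\ftg{F}_n$, and in particular $A'\cap B\neq\emptyset$; thus $z:=(\alpha_{1,n},A'\cap B,\alpha_{1,n})$ is a non-zero element of $E(S)$ with $z\in\uset{(\alpha_{1,n},A'\cap B,\alpha_{1,n})}\subseteq\xi$. Applying Proposition~\ref{prop:order.in.es} twice more, $z\leq(\alpha_{1,n},A',\alpha_{1,n})\leq p$ and $z\leq(\alpha_{1,n},B,\alpha_{1,n})\leq q$, hence $z\leq p\wedge q$ and, by upward closure, $p\wedge q\in\xi$. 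The only step needing attention is the slide-down: keeping the word-indices aligned and verifying that one-step admissibility really does propagate to $r(A,\alpha_{m+1,k})\in\ftg{F}_k$. Since that propagation is just the relative-range composition law recalled in the preliminaries, I expect no genuine obstacle — the rest is routine manipulation inside $E(S)$ via the order description of Proposition~\ref{prop:order.in.es}.
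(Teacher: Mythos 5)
Your proof is correct: iterating the admissibility condition via $r(r(A,\mu),\nu)=r(A,\mu\nu)$ to push a generator from level $m$ down to level $n$, intersecting inside the filter $\ftg{F}_n$, and invoking Proposition~\ref{prop:order.in.es} is exactly the natural argument, and the edge cases ($m=n$, $m=0$ with $\ftg{F}_0=\emptyset$ or a filter in $\acf$) all go through as you indicate. Note that this paper only quotes the statement from \cite{Boava2016} (Proposition 4.7) without reproducing a proof, so there is no in-paper argument to compare against; your route is the standard one and matches what the cited reference does in spirit.
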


\begin{proposition}[\cite{Boava2016}, Proposition 4.8]
	\label{prop:complete.family.from.filter}
	Let $\xi$ be a filter of infinite type in $E(S)$. Then there exists $\alpha\in\awinf$ such that every $p\in\xi$ can be written as $p=(\alpha_{1,n},A,\alpha_{1,n})$ for some $n\geq0$ and some $A\in\acfrg{\alpha_{1,n}}$. Moreover, if we define for each $n\geq0$,
	\[\ftg{F}_n=\{A\in\acf \ | \ (\alpha_{1,n},A,\alpha_{1,n})\in\xi\},\]
	then $\{\ftg{F}_n\}_{n\geq0}$ is a complete family for $\alpha$.
\end{proposition}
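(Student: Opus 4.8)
The plan is to first read off the infinite word $\alpha$ from the combinatorics of $\xi$, and then verify the two assertions by direct computation with Proposition~\ref{prop:order.in.es} and the two cases of the product in $S$. To produce $\alpha$: as recalled above, the words occurring in elements of $\xi$ are pairwise comparable, so they form a chain under the relation ``is a beginning of''; since $\xi$ is of infinite type this chain has no greatest element, hence contains words of arbitrarily large length, and comparability forces all of them to agree letter by letter, so they assemble into a single $\alpha\in\alf^{\infty}$ of which each occurring word is a beginning. If $(\beta,B,\beta)\in\xi$ then $\emptyset\neq B\subseteq r(\beta)$, so $\beta\in\awplus$, hence every beginning of $\beta$ — in particular every $\alpha_{1,n}$ — lies in $\awplus$, and therefore $\alpha\in\awinf$. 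In particular every non-zero $p\in\xi$ is of the form $(\alpha_{1,n},A,\alpha_{1,n})$ with $n\geq0$, which is the first assertion; moreover, for each $m\geq1$ the word $\alpha_{1,m}$ actually occurs in $\xi$, since choosing $(\beta,B,\beta)\in\xi$ with $|\beta|\geq m$ we have $(\beta,B,\beta)\leq(\alpha_{1,m},r(\alpha_{1,m}),\alpha_{1,m})$ by Proposition~\ref{prop:order.in.es}, so the latter lies in $\xi$ by upward closure.

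Next I would check that each $\ftg{F}_n$ is a filter in $\acfrg{\alpha_{1,n}}$ (and that $\ftg{F}_0$ is a filter in $\acf$, or empty). Any $A\in\ftg{F}_n$ has $\emptyset\neq A\subseteq r(\alpha_{1,n})$, so $\emptyset\notin\ftg{F}_n\subseteq\acfrg{\alpha_{1,n}}$; upward closure of $\ftg{F}_n$ within $\acfrg{\alpha_{1,n}}$ is immediate from Proposition~\ref{prop:order.in.es} (applied with empty trailing word) together with upward closure of $\xi$; and closure under intersection follows from $(\alpha_{1,n},A,\alpha_{1,n})\cdot(\alpha_{1,n},B,\alpha_{1,n})=(\alpha_{1,n},A\cap B,\alpha_{1,n})$, which is non-zero and belongs to $\xi$ because $\xi$ is a filter.

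It remains to prove completeness, namely $\ftg{F}_n=\{A\in\acfrg{\alpha_{1,n}}\mid r(A,\alpha_{n+1})\in\ftg{F}_{n+1}\}$ for every $n\geq0$. For ``$\supseteq$'': if $r(A,\alpha_{n+1})\in\ftg{F}_{n+1}$ then $(\alpha_{1,n+1},r(A,\alpha_{n+1}),\alpha_{1,n+1})\in\xi$, and Proposition~\ref{prop:order.in.es} gives $(\alpha_{1,n+1},r(A,\alpha_{n+1}),\alpha_{1,n+1})\leq(\alpha_{1,n},A,\alpha_{1,n})$, so $A\in\ftg{F}_n$ by upward closure of $\xi$. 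For ``$\subseteq$'': given $A\in\ftg{F}_n$, use the first paragraph to pick $(\alpha_{1,n+1},B,\alpha_{1,n+1})\in\xi$; then $(\alpha_{1,n},A,\alpha_{1,n})\cdot(\alpha_{1,n+1},B,\alpha_{1,n+1})=(\alpha_{1,n+1},r(A,\alpha_{n+1})\cap B,\alpha_{1,n+1})$, which is non-zero and lies in $\xi$ as a product of two members of the filter, so $r(A,\alpha_{n+1})\cap B\in\ftg{F}_{n+1}$ and hence $r(A,\alpha_{n+1})\in\ftg{F}_{n+1}$ by the upward closure established in the second paragraph.

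The only step that is not essentially mechanical is the extraction of $\alpha$: one has to be careful that the unbounded chain of pairwise comparable words genuinely assembles into an element of $\awinf$ and to keep track of exactly which idempotents upward closure forces into $\xi$. Once $\alpha$ is in hand, Proposition~\ref{prop:order.in.es} and the two cases of the multiplication in $S$ make the remaining verifications short and routine.
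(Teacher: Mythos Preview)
This proposition is quoted from \cite{Boava2016} and not proved in the present paper, so there is no in-paper argument to compare against. Your route---extract $\alpha$ from the chain of pairwise comparable words appearing in $\xi$, then verify the filter axioms for each $\ftg{F}_n$ and the two inclusions for completeness via Proposition~\ref{prop:order.in.es} and the product formula in $S$---is the natural one, and the verifications in your second and third paragraphs are correct.

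One step needs more care. Your inference ``every $\alpha_{1,n}$ lies in $\awplus$, and therefore $\alpha\in\awinf$'' is not justified as written. By definition $\awinf=\lbf(\dgraph^{\infty})$, so $\alpha\in\awinf$ means $\alpha$ is the label of an actual infinite path in $\dgraph$; this does not follow merely from each finite prefix being a labelled path. (For example, take a graph with a root emitting, for each $k\geq 1$, a length-$k$ path all of whose edges carry the single label $a$: every $a^n$ is a labelled path, there is no infinite path so $a^{\infty}\notin\awinf$, yet the upward closure of $\{(a^n,r(a^n),a^n):n\geq 1\}$ is a filter of infinite type with associated word $a^{\infty}$.) Your remaining conclusions use only that each $\alpha_{1,n}\in\awstar$, so they are unaffected; but either the claim $\alpha\in\awinf$ requires an additional argument or hypothesis, or the intended reading of $\awinf$ in \cite{Boava2016} is the set of infinite words all of whose finite prefixes lie in $\awstar$. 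Also, the minor claim ``$\emptyset\neq B\subseteq r(\beta)$, so $\beta\in\awplus$'' is not quite right, since $\beta=\eword$ is allowed; what you actually need (and use) is just $\beta\in\awstar$.
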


Admissible families can be \emph{completed}, in the following sense: if $\alpha\in\awstar$ and $\{\ftg{F}_n\}_{n\geq0}$ is  an admissible family for $\alpha$, then there is a complete family for $\alpha$, say $\{\overline{\ft}_n\}_{n\geq0}$, such that $\ftg{F}_n\subseteq \overline{\ft}_n$, for all $n\geq0$ and both $(\alpha, \{\ftg{F}_n\}_{n\geq0})$ and $(\alpha, \{\overline{\ft}_n\}_{n\geq0})$ generate the same filter in $E(S)$ (\cite{Boava2016}, Proposition 4.11).

One can also talk of admissible and complete families for a labelled path $\alpha$ of \emph{finite} length: the definition is the same, only with a finite family $\{\ftg{F}_n\}_{0\leq n\leq|\alpha|}$ satisfying the above-mentioned conditions. Note then that $\ftg{F}_{|\alpha|}$ determines completely all other filters in the family.

\begin{proposition}[\cite{Boava2016}, Proposition 4.12]
	\label{prop:properties.about.filters.of.finite.type}
	Let $\xi$ be a filter of finite type in $E(S)$ and $(\alpha,\ft)$ be its associated pair. For each $n\in\{0,1,\ldots,|\alpha|\}$, define
	\[\ftg{F}_{n}= \{A\in\acf \ | \ (\alpha_{1,n},A,\alpha_{1,n})\in\xi\}.\]
	Then $\ftg{F}_{|\alpha|}=\ft$ and $\{\ftg{F}_n\}_{0\leq n\leq|\alpha|}$ is a complete family for $\alpha$.
\end{proposition}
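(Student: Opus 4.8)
The plan is to verify the two assertions separately, drawing almost entirely on the already-established description of $\xi$ in terms of its associated pair $(\alpha,\ft)$ from Propositions \ref{prop:filter.from.finite.word.and.filter} and \ref{prop:finite.word.and.filter.from.filter}. First I would recall that, since $\xi$ is of finite type with largest word $\alpha$, Proposition \ref{prop:finite.word.and.filter.from.filter} gives $\ft = \{A\in\acf \mid (\alpha,A,\alpha)\in\xi\}$ and $\xi = \bigcup_{A\in\ft}\uset{(\alpha,A,\alpha)}$, and moreover the second description in Proposition \ref{prop:filter.from.finite.word.and.filter} identifies $\xi$ explicitly as $\{(\alpha_{1,i},A,\alpha_{1,i})\in E(S) \mid 0\leq i\leq|\alpha|,\ r(A,\alpha_{i+1,|\alpha|})\in\ft\}$. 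The claim $\ftg{F}_{|\alpha|}=\ft$ is then immediate: taking $n=|\alpha|$ in the definition of $\ftg{F}_n$ gives exactly the set defining $\ft$ (noting $\alpha_{1,|\alpha|}=\alpha$, and that the condition $A\in\acf$ together with $(\alpha,A,\alpha)\in\xi$ forces $A\in\acfra$ since the middle entry of any element of $E(S)$ with word $\alpha$ lies in $\acfra$).

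For the second assertion I would first identify each $\ftg{F}_n$ concretely. Using the explicit form of $\xi$, for $0\leq n\leq|\alpha|$ we have $(\alpha_{1,n},A,\alpha_{1,n})\in\xi$ if and only if $r(A,\alpha_{n+1,|\alpha|})\in\ft$; hence $\ftg{F}_n = \{A\in\acfrg{\alpha_{1,n}} \mid r(A,\alpha_{n+1,|\alpha|})\in\ft\}$. (Again one checks $A\in\acfrg{\alpha_{1,n}}$: membership of $(\alpha_{1,n},A,\alpha_{1,n})$ in $E(S)$ requires precisely this.) In particular each $\ftg{F}_n$ is nonempty, because $\ft$ is nonempty: picking $A_0\in\ft\subseteq\acfra$, left-resolvingness is not needed here — rather, one observes that $r(A_0,\alpha_{n+1,|\alpha|})\in\ft$ need not hold for $A_0$ itself, so instead I would argue nonemptiness directly from the fact that $\ft$ is a filter in $\acfra$ and that $r(\alpha_{1,n})$ restricted along $\alpha_{n+1,|\alpha|}$ covers $r(\alpha)$: concretely, if $B\in\ft$ then $B\subseteq r(\alpha)=r(r(\alpha_{1,n}),\alpha_{n+1,|\alpha|})$, and by the transitivity $r(r(A,\alpha_{1,n}),\alpha_{n+1,|\alpha|})=r(A,\alpha)$ together with closure of $\acf$ under relative ranges, one locates $A\in\acfrg{\alpha_{1,n}}$ with $r(A,\alpha_{n+1,|\alpha|})=B\in\ft$; this shows $B\in\ftg{F}_n$ pushed forward, i.e. $\ftg{F}_n\neq\emptyset$. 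Verifying that $\ftg{F}_n$ is upward closed and closed under (binary) meets in $\acfrg{\alpha_{1,n}}$, and that $\emptyset\notin\ftg{F}_n$, then follows from the corresponding properties of $\ft$ using that $A\mapsto r(A,\alpha_{n+1,|\alpha|})$ preserves inclusions and intersections — the latter precisely by the \emph{weakly left-resolving} hypothesis, which gives $r(A\cap B,\alpha_{n+1,|\alpha|})=r(A,\alpha_{n+1,|\alpha|})\cap r(B,\alpha_{n+1,|\alpha|})$ — and that it maps nonempty relative-range sets to nonempty ones while $\emptyset\mapsto\emptyset$.

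It remains to check that $\{\ftg{F}_n\}_{0\leq n\leq|\alpha|}$ is \emph{complete} for $\alpha$, i.e. $\ftg{F}_n = \{A\in\acfrg{\alpha_{1,n}} \mid r(A,\alpha_{n+1})\in\ftg{F}_{n+1}\}$ for all $0\leq n<|\alpha|$. Using the concrete description of both sides, this reduces to the equivalence
\[
r(A,\alpha_{n+1,|\alpha|})\in\ft \quad\Longleftrightarrow\quad r\bigl(r(A,\alpha_{n+1}),\alpha_{n+2,|\alpha|}\bigr)\in\ft,
\]
which is nothing but the identity $r(r(A,\alpha_{n+1}),\alpha_{n+2,|\alpha|}) = r(A,\alpha_{n+1,|\alpha|})$ — a special case of the transitivity $r(r(A,\gamma),\delta)=r(A,\gamma\delta)$ recorded in the preliminaries. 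So the two sets literally coincide. I expect the only genuinely delicate point to be the nonemptiness of the intermediate $\ftg{F}_n$ and the bookkeeping that every element claimed to lie in $\ftg{F}_n$ really sits in the correct Boolean algebra $\acfrg{\alpha_{1,n}}$; everything else is a direct translation through the explicit formula for $\xi$ and the relative-range identities, with the weakly-left-resolving hypothesis entering exactly once, to guarantee that pushing forward along $\alpha_{n+1,|\alpha|}$ respects intersections.
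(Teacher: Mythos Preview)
The paper does not prove this proposition; it is quoted from \cite{Boava2016} (Proposition 4.12) without proof, so there is no ``paper's own proof'' to compare against. I can therefore only assess your argument on its own merits.

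Your overall strategy is sound and the two main steps---identifying each $\ftg{F}_n$ as $\{A\in\acfrg{\alpha_{1,n}}\mid r(A,\alpha_{n+1,|\alpha|})\in\ft\}$ via the explicit description of $\xi$ in Proposition~\ref{prop:filter.from.finite.word.and.filter}, and then reducing completeness to the transitivity identity $r(r(A,\alpha_{n+1}),\alpha_{n+2,|\alpha|})=r(A,\alpha_{n+1,|\alpha|})$---are correct and efficient. The verification that each $\ftg{F}_n$ is upward closed, meet-closed (via the weakly-left-resolving hypothesis), and excludes $\emptyset$ is also fine.

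There is, however, a genuine slip in your nonemptiness argument. You write that given $B\in\ft$ one ``locates $A\in\acfrg{\alpha_{1,n}}$ with $r(A,\alpha_{n+1,|\alpha|})=B$''; but the relative range map $r(\,\cdot\,,\alpha_{n+1,|\alpha|})$ is not surjective in general, so no such $A$ need exist. The fix is much simpler than what you attempt: for $n\geq 1$ just take $A=r(\alpha_{1,n})\in\acfrg{\alpha_{1,n}}$ and observe that $r(r(\alpha_{1,n}),\alpha_{n+1,|\alpha|})=r(\alpha)$, which lies in $\ft$ because $\ft$ is a filter in $\acfra$ and $r(\alpha)$ is its top element. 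Hence $r(\alpha_{1,n})\in\ftg{F}_n$. (For $n=0$ the family definition permits $\ftg{F}_0=\emptyset$, so nothing needs to be checked there.) With this correction your proof goes through.
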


We can thus bring about a common description for filters of finite and infinite type, summarized in the following theorem.

\begin{theorem}[\cite{Boava2016}, Theorem 4.13]\label{thm:filters.in.E(S)}
	Let $\lspace$ be a labelled space which is weakly left-resolving, and let $S$ be its associated inverse semigroup. Then there is a bijective correspondence between filters in $E(S)$ and pairs $(\alpha, \{\ftg{F}_n\}_{0\leq n\leq|\alpha|})$, where $\alpha\in\awleinf$ and $\{\ftg{F}_n\}_{0\leq n\leq|\alpha|}$ is a complete family for $\alpha$ (understanding that ${0\leq n\leq|\alpha|}$ means $0\leq n<\infty$ when $\alpha\in\awinf$).
\end{theorem}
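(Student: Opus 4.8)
The plan is to prove the theorem by splitting filters in $E(S)$ into the two mutually exclusive classes --- finite type and infinite type --- and assembling the claimed bijection case by case from the results already recalled above.

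First, the finite-type case. Given a pair $(\alpha, \{\ftg{F}_n\}_{0\le n\le|\alpha|})$ with $\alpha\in\awstar$ and $\{\ftg{F}_n\}_{0\le n\le|\alpha|}$ complete for $\alpha$, completeness forces $\ftg{F}_{|\alpha|}$ to determine every earlier $\ftg{F}_n$ via $\ftg{F}_n=\{A\in\acfrg{\alpha_{1,n}} \ | \ r(A,\alpha_{n+1,|\alpha|})\in\ftg{F}_{|\alpha|}\}$, so the pair is completely encoded by $(\alpha,\ftg{F}_{|\alpha|})$, a word in $\awstar$ together with a filter in $\acfra$. Proposition~\ref{prop:filter.from.finite.word.and.filter} sends such a pair to a filter $\xi$ of finite type with largest word $\alpha$, and Proposition~\ref{prop:finite.word.and.filter.from.filter} recovers $(\alpha,\ftg{F}_{|\alpha|})$ from $\xi$; these two are mutually inverse, and the $\ft$ produced is exactly $\ftg{F}_{|\alpha|}$. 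Finally, Proposition~\ref{prop:properties.about.filters.of.finite.type} guarantees that, starting from any filter $\xi$ of finite type with associated pair $(\alpha,\ft)$, the family $\ftg{F}_n=\{A\in\acf \ | \ (\alpha_{1,n},A,\alpha_{1,n})\in\xi\}$ is complete for $\alpha$ with $\ftg{F}_{|\alpha|}=\ft$. Hence $\xi\leftrightarrow(\alpha,\{\ftg{F}_n\}_{0\le n\le|\alpha|})$ is a bijection between filters of finite type in $E(S)$ and pairs as in the statement with $\alpha\in\awstar$.

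Next, the infinite-type case, which is essentially immediate. Proposition~\ref{prop:complete.family.from.filter} takes a filter $\xi$ of infinite type to a pair $(\alpha,\{\ftg{F}_n\}_{n\ge0})$ with $\alpha\in\awinf$ and $\{\ftg{F}_n\}_{n\ge0}$ complete for $\alpha$, while Proposition~\ref{prop:filter.from.admissible.family}, applied to such a family (which is complete, hence admissible), produces a filter of $E(S)$ from the pair. One checks these are mutually inverse: reconstructing $\xi$ from $(\alpha,\{\ftg{F}_n\}_{n\ge0})$ through the union formula of Proposition~\ref{prop:filter.from.admissible.family} returns the original filter, and reading the family back off that union returns the original $\{\ftg{F}_n\}_{n\ge0}$ --- both follow directly from the defining formulas in these two propositions together with the assertion in Proposition~\ref{prop:complete.family.from.filter} that every element of an infinite-type filter has the form $(\alpha_{1,n},A,\alpha_{1,n})$.

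It then remains to glue the two cases. Every filter in $E(S)$ is of exactly one of the two types, and the target set of pairs likewise splits according to whether $\alpha\in\awstar$ or $\alpha\in\awinf$; the two bijections above respect this dichotomy and therefore combine into a single bijection, with the index-range convention as stated. A point to keep track of throughout is uniqueness of the word $\alpha$ attached to a given filter: this is the observation, recorded before Proposition~\ref{prop:filter.from.finite.word.and.filter}, that the words of any two elements of a filter are comparable, which in the finite-type case singles out a unique longest word and in the infinite-type case a unique infinite word via Proposition~\ref{prop:complete.family.from.filter}. I expect the only real work to be the bookkeeping in the finite-type case --- reconciling the ``pair $(\alpha,\ft)$'' description coming from Propositions~\ref{prop:filter.from.finite.word.and.filter}--\ref{prop:finite.word.and.filter.from.filter} with the ``complete family'' description coming from Proposition~\ref{prop:properties.about.filters.of.finite.type} --- but no genuinely new argument is needed beyond what is already cited.
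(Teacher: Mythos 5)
Your argument is correct and follows the same route the paper intends: the theorem is presented there as the summary of the preceding correspondence results (Propositions~\ref{prop:filter.from.finite.word.and.filter}, \ref{prop:finite.word.and.filter.from.filter}, \ref{prop:filter.from.admissible.family}, \ref{prop:complete.family.from.filter} and \ref{prop:properties.about.filters.of.finite.type} of \cite{Boava2016}), split by finite versus infinite type exactly as you do, with the finite-type pair $(\alpha,\ft)$ identified with the complete family determined by $\ftg{F}_{|\alpha|}=\ft$. Nothing essential is missing.
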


We may occasionally denote a filter $\xi$ in $E(S)$ with associated labelled path $\alpha\in\awleinf$ by $\xia$ to stress the word $\alpha$; in addition, the filters in the complete family associated with $\xia$ will be denoted by $\xi^{\alpha}_n$ (or $\xi_n$ when there is no risk of confusion about the associated word). Specifically, \[\xi^{\alpha}_n=\{A\in\acf \ | \ (\alpha_{1,n},A,\alpha_{1,n}) \in \xia\}\]
and the family $\{\xi^{\alpha}_n\}_{0\leq n\leq|\alpha|}$ satisfies
\[\xi^{\alpha}_n = \{A\in \acfrg{\alpha_{1,n}} \ | \ r(A,\alpha_{n+1,m})\in\xi^{\alpha}_{m}\}\]
for all $0\leq n<m\leq|\alpha|$.

In what follows, $\filt$ denotes the set of all filters in $E(S)$ and $\filtw{\alpha}$ stands for the subset of $\filt$ of those filters whose associated word is $\alpha\in\awleinf$.

The following is a complete description of the ultrafilters in $E(S)$.

\begin{theorem}[\cite{Boava2016}, Theorem 5.10]
	\label{thm:ultrafilters}
	Let $\lspace$ be a labelled space which is weakly left-resolving, and let $S$ be its associated inverse semigroup. Then the ultrafilters in $E(S)$ are:
	\begin{enumerate}[(i)]
		\item The filters of finite type $\xia$ such that $\xi_{|\alpha|}$ is an ultrafilter in $\acfra$ and for each  $b\in\alf$ there exists $A\in\xi_{|\alpha|}$ such that $r(A,b)=\emptyset$.
		\item The filters of infinite type $\xia$ such that the family $\{\xi_n\}_{n\geq 0}$ is maximal among all complete families for $\alpha$ (that is, if $\{\ftg{F}_n\}_{n\geq 0}$ is a complete family for $\alpha$ such that $\xi_n\subseteq\ftg{F}_n$ for all $n\geq0$, then $\xi_n=\ftg{F}_n$ for all $n\geq0$).
	\end{enumerate}
	Suppose in addition that the accommodating family $\acf$ is closed under relative complements. Then (ii) can be replaced with
	\begin{enumerate}[(i)']
		\setcounter{enumi}{1}
		\item\label{item:ultrafilters.infinite.type.special.case} The filters of infinite type $\xia$ such that $\xi_n$ is an ultrafilter for every $n>0$ and $\xi_0$ is either an ultrafilter or the empty set.
	\end{enumerate}
\end{theorem}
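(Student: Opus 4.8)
The plan is to test each of the listed filters against Exel's criterion, Proposition~\ref{prop:ultrafilter.intersection}: a filter $\xi$ in $E(S)$ is an ultrafilter exactly when every $q\in E(S)$ with $qp\neq 0$ for all $p\in\xi$ already lies in $\xi$. Throughout I would represent filters by their pairs $(\alpha,\{\xi_n\})$ as in Theorem~\ref{thm:filters.in.E(S)}, use the order formula of Proposition~\ref{prop:order.in.es}, and use that every element of $\xi^{\alpha}$ has the form $(\alpha_{1,n},A,\alpha_{1,n})$ with $A\in\xi_n$. The two product identities $(\alpha_{1,k},C,\alpha_{1,k})(\alpha_{1,n},A,\alpha_{1,n})=(\alpha_{1,n},r(C,\alpha_{k+1,n})\cap A,\alpha_{1,n})$ for $k\le n$, and $=(\alpha_{1,k},C\cap r(A,\alpha_{n+1,k}),\alpha_{1,k})$ for $n\le k$, together with completeness of $\{\xi_n\}$ (which transports ``$(\alpha_{1,n},A,\alpha_{1,n})\in\xi$'' between levels), will carry most of the computation.

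\emph{Finite type.} Let $\xi=\xi^{\alpha}$ be of finite type with largest word $\alpha$. For necessity of (i): if $r(A,b)\neq\emptyset$ for every $A\in\xi_{|\alpha|}$, then by weak left-resolvingness $\{r(A,b)\mid A\in\xi_{|\alpha|}\}$ is a filter base in $\acfrg{\alpha b}$, so by Proposition~\ref{prop:filter.from.finite.word.and.filter} it induces a finite-type filter with largest word $\alpha b$ that properly contains $\xi$; and if some $C\in\acfra$ meets every element of $\xi_{|\alpha|}$ but $C\notin\xi_{|\alpha|}$, the second identity shows $(\alpha,C,\alpha)$ meets every element of $\xi$ yet is not in $\xi$. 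Either way Proposition~\ref{prop:ultrafilter.intersection} fails, so $\xi_{|\alpha|}$ must be an ultrafilter in $\acfra$ and the ``no-extension'' condition must hold. For sufficiency, assume (i) and let $q=(\gamma,C,\gamma)$ meet every element of $\xi$; comparability of words forces $\gamma$ to be a beginning of $\alpha$ --- if $\gamma=\alpha\gamma'$ with $\gamma'\neq\eword$, pick $A_0\in\xi_{|\alpha|}$ with $r(A_0,\gamma'_1)=\emptyset$, so $q(\alpha,A_0,\alpha)=0$ --- and with $\gamma=\alpha_{1,k}$ the first identity shows $r(C,\alpha_{k+1,|\alpha|})$ meets every element of the ultrafilter $\xi_{|\alpha|}$, hence lies in it, hence $C\in\xi_k$ by completeness and $q\in\xi$.

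\emph{Infinite type.} Let $\xi=\xi^{\alpha}$ be of infinite type. If $\{\xi_n\}$ is not maximal among complete families for $\alpha$, a strictly larger dominating complete family induces, via Proposition~\ref{prop:filter.from.admissible.family} and the correspondence of Theorem~\ref{thm:filters.in.E(S)}, a filter properly containing $\xi$, so $\xi$ is not an ultrafilter. Conversely, assume $\{\xi_n\}$ maximal and $q=(\gamma,C,\gamma)$ meets every element of $\xi$. Since $\alpha$ is infinite and $\gamma$ is comparable with every $\alpha_{1,n}$, we get $\gamma=\alpha_{1,k}$ for some $k$, and the first identity yields $r(C,\alpha_{k+1,n})\cap A\neq\emptyset$ for all $n\ge k$ and $A\in\xi_n$. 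I would then build an admissible family for $\alpha$: for $n\ge k$ let $\ftg{G}_n$ be the filter in $\acfrg{\alpha_{1,n}}$ generated by $\xi_n\cup\{r(C,\alpha_{k+1,n})\}$ (well defined by the meeting property and weak left-resolvingness), and for $n<k$ define $\ftg{G}_n$ by pulling $\ftg{G}_k$ back along $\alpha_{n+1,k}$. A routine verification using weak left-resolvingness shows $\{\ftg{G}_n\}$ is admissible, $\xi_n\subseteq\ftg{G}_n$ for all $n$, and $C\in\ftg{G}_k$. Completing this admissible family (\cite{Boava2016}, Proposition~4.11) and invoking maximality forces the completion to equal $\{\xi_n\}$, so $C\in\xi_k$ and $q\in\xi$; hence $\xi$ is an ultrafilter.

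\emph{The relative-complement case.} Assume now $\acf$ is closed under relative complements, so each $\acfrg{\alpha_{1,n}}$ with $n\ge1$ is a Boolean algebra with top $r(\alpha_{1,n})$, $\acf$ is a generalized Boolean algebra, and each relative-range map preserves unions, intersections (by weak left-resolvingness) and relative complements. If $\xi=\xi^{\alpha}$ is an infinite-type ultrafilter then, for $m\ge1$, a putative $C\in\acfrg{\alpha_{1,m}}\setminus\xi_m$ meeting all of $\xi_m$ would make $q=(\alpha_{1,m},C,\alpha_{1,m})$ meet every element of $\xi$ --- at levels $n\le m$ by completeness, and at $n>m$ because $r(C,\alpha_{m+1,n})\cap A=\emptyset$ would give $A\subseteq r(\alpha_{1,n})\setminus r(C,\alpha_{m+1,n})=r(r(\alpha_{1,m})\setminus C,\alpha_{m+1,n})$, hence $r(\alpha_{1,m})\setminus C\in\xi_m$ by upward closure and completeness, contradicting the choice of $C$ --- yet $q\notin\xi$, contradicting Proposition~\ref{prop:ultrafilter.intersection}; so $\xi_m$ is an ultrafilter for $m\ge1$. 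If $\xi_0\neq\emptyset$, fix $B_0\in\xi_0$; for any $C\in\acf\setminus\xi_0$ the element $D=B_0\setminus(B_0\cap C)$ is disjoint from $C$ and $r(D,\alpha_1)=r(B_0,\alpha_1)\setminus r(C,\alpha_1)\in\xi_1$ (since $\xi_1$ is an ultrafilter and $r(C,\alpha_1)\notin\xi_1$), so $D\in\xi_0$; hence no $C\notin\xi_0$ meets all of $\xi_0$ and $\xi_0$ is an ultrafilter. The converse is soft: if each $\xi_n$ ($n\ge1$) is an ultrafilter and $\xi_0$ is an ultrafilter or empty, a complete family dominating $\{\xi_n\}$ must coincide with it at levels $\ge1$ (ultrafilters are maximal filters) and then at level $0$ by completeness, so $\{\xi_n\}$ is maximal and case (ii) applies. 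I expect the infinite-type ``only if'' direction to be the main obstacle: converting abstract maximality of $\{\xi_n\}$ into the membership $q\in\xi$ forces one through the explicit admissible-family construction above, and the relative-range bookkeeping needed to verify admissibility via weak left-resolvingness is the delicate part; the Boolean-homomorphism observations behind the equivalence of (ii) and (ii)' are routine by comparison, apart from the level-$0$ case, which genuinely needs both the relative-complement hypothesis and the anchor $B_0$.
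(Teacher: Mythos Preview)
The paper does not contain a proof of this theorem; it appears in Section~\ref{section:preliminaries} (Preliminaries) as a result quoted verbatim from \cite{Boava2016}, Theorem~5.10, and no argument is reproduced. There is therefore nothing in the present paper to compare your proposal against.

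For what it is worth, your sketch is coherent and follows a natural line: using Proposition~\ref{prop:ultrafilter.intersection} as the working criterion, handling the finite and infinite cases separately via the correspondence of Theorem~\ref{thm:filters.in.E(S)}, and in the infinite case enlarging $\{\xi_n\}$ to an admissible family containing the candidate element and then completing and invoking maximality. The verifications you flag as ``routine'' (admissibility of the family $\{\ftg{G}_n\}$ via weak left-resolvingness, and the Boolean-homomorphism facts behind the equivalence of (ii) and (ii)') are indeed routine, and the level-$0$ argument using an anchor $B_0\in\xi_0$ and relative complements is the right way to handle the generalized Boolean algebra $\acf$. I see no genuine gap; a full write-up would only need to expand the admissibility check and the pullback definition of $\ftg{G}_n$ for $n<k$.
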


Let $\ftight$ be the set of all tight filters in $E(S)$, endowed with the topology induced from the topology of pointwise convergence of characters, via the bijection between tight characters and tight filters given at the end of Subsection \ref{subsection:filters.and.characters}. Note then that  $\ftight$ is (homeomorphic to) the tight spectrum of $E(S)$, and will be treated as such in the text.

For each $\alpha\in\awstar$ recall from the end of Subsection \ref{subsection:labelled.spaces} that 
\[X_{\alpha}=\{\ft\subseteq\acfra \ | \ \ft \ \mbox{is an ultrafilter in} \ \acfra\}.\]
Also, define
\[X_{\alpha}^{sink}=\{\ft\in X_{\alpha} \ | \ \forall\,b\in\alf, \ \exists\,A\in\ft \ \mbox{such that} \ r(A,b)=\emptyset\}.\]

Suppose the accommodating family $\acf$ to be closed under relative complements. In this case, for every $\alpha,\beta\in\awplus$ such that $\alpha\beta\in\awplus$, the relative range map $\newf{r(\,\cdot\,,\beta)}{\acfrg{\alpha}}{\acfrg{\alpha\beta}}$ is a morphism of Boolean algebras and, therefore, we have its dual morphism \[\newf{f_{\alpha[\beta]}}{X_{\alpha\beta}}{X_{\alpha}}\] given by $f_{\alpha[\beta]}(\ft)=\{A\in\acfra \ | \ r(A,\beta)\in\ft\}$. When $\alpha=\eword$,  if $\ft\in\acfrg{\beta}$ then $\{A\in\acf \ | \ r(A,\beta)\in\ft\}$ is either an ultrafilter in $\acf=\acfrg{\eword}$ or the empty set, and we can therefore consider $\newf{f_{\eword[\beta]}}{X_{\beta}}{X_{\eword}\cup\{\emptyset\}}$.

Employing this new notation, if $\xia$ is a filter in $E(S)$ and $0\leq m<n\leq|\alpha|$, then $\xi_m=f_{\alpha_{1,m}[\alpha_{m+1,n}]}(\xi_n)$.

If we endow the sets $X_{\alpha}$ with the topology given by the convergence of filters stated at the end of Subsection \ref{subsection:filters.and.characters} (this is the pointwise convergence of characters), it is clear that the functions $f_{\alpha[\beta]}$ are continuous. Furthermore, it is easy to see that $f_{\alpha[\beta\gamma]}=f_{\alpha[\beta]}\circ f_{\alpha\beta[\gamma]}$.

Under the hypothesis of $\acf$ being closed under relative complements, the only tight filters of infinite type in $E(S)$ are the ultrafilters of infinite type (\cite{Boava2016}, Corollary 6.2). The next result classifies the tight filters of finite type.

\begin{proposition}[\cite{Boava2016}, Proposition 6.4]
	\label{prop:tight.filters.of.finite.type}
	Suppose the accommodating family $\acf$ to be closed under relative complements and let $\xia$ be a filter of finite type. Then $\xia$ is a tight filter if and only if $\xi_{|\alpha|}$ is an ultrafilter and at least one of the following conditions hold:
	\begin{enumerate}[(a)]
		\item There is a net $\{\ftg{F}_{\lambda}\}_{\lambda\in\Lambda}\subseteq X_{\alpha}^{sink}$ converging to $\xi_{|\alpha|}$.
		\item There is a net $\{(t_{\lambda},\ftg{F}_{\lambda})\}_{\lambda\in\Lambda}$, where $t_{\lambda}$ is a letter in $\alf$ and $\ftg{F}_{\lambda}\in X_{\alpha t_{\lambda}}$ for each $\lambda\in\Lambda$, such that $\{f_{\alpha[t_{\lambda}]}(\ftg{F}_{\lambda})\}_{\lambda\in\Lambda}$ converges to $\xi_{|\alpha|}$ and $\{t_\lambda\}_\lambda$ converges to infinity (that is, for every $b\in\alf$ there is $\lambda_b\in\Lambda$ with $t_{\lambda}\neq b$ for all $\lambda\geq\lambda_b$).
	\end{enumerate}
\end{proposition}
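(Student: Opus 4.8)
The plan is to use Exel's theorem that $\ftight$ is the closure in $\widehat{E(S)}_{0}$ of the set $\widehat{E(S)}_{\infty}$ of characters of ultrafilters of $E(S)$ (\cite{MR2419901}, Theorem 12.9), so that $\xia$ is tight if and only if there is a net $\{\zeta_{\lambda}\}_{\lambda}$ of ultrafilters in $E(S)$ with $\phi_{\zeta_{\lambda}}\to\phi_{\xia}$ pointwise on $E(S)$. All the work then consists in translating such a net into the data of (a) or (b) and back, using the description of filters in $E(S)$ (Theorem~\ref{thm:filters.in.E(S)}), the identity $\xi_{m}=f_{\alpha_{1,m}[\alpha_{m+1,n}]}(\xi_{n})$, and the classification of ultrafilters (Theorem~\ref{thm:ultrafilters}).

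For the forward implication I would first check that a tight $\xia$ has $\xi_{|\alpha|}$ an ultrafilter: given $A\in\xi_{|\alpha|}$ and a partition $A=A_{1}\sqcup A_{2}$ inside $\acfra$ (possible since $\acf$ is closed under relative complements), Proposition~\ref{prop:order.in.es} together with $r(A,\delta')=r(A_{1},\delta')\cup r(A_{2},\delta')$ shows $\{(\alpha,A_{1},\alpha),(\alpha,A_{2},\alpha)\}\setminus\{0\}$ is a finite cover of $(\alpha,A,\alpha)$, so tightness forces $A_{1}\in\xi_{|\alpha|}$ or $A_{2}\in\xi_{|\alpha|}$; applied with $A=r(\alpha)$ this is the defining property of an ultrafilter in the (generalized) Boolean algebra $\acfra$. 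Next, take a net $\{\zeta_{\lambda}\}$ of ultrafilters with $\phi_{\zeta_{\lambda}}\to\phi_{\xia}$ and let $\gamma^{(\lambda)}$ be the word of $\zeta_{\lambda}$. Since $(\alpha_{1,i},r(\alpha_{1,i}),\alpha_{1,i})\in\xia$ for every $i\le|\alpha|$, eventually each of these lies in $\zeta_{\lambda}$, so, after passing to a subnet, $\alpha$ is a beginning of $\gamma^{(\lambda)}$ for all $\lambda$. Now either some subnet has $|\gamma^{(\lambda)}|>|\alpha|$, or eventually $\gamma^{(\lambda)}=\alpha$. In the first case put $t_{\lambda}=\gamma^{(\lambda)}_{|\alpha|+1}$ and $\ftg{F}_{\lambda}=(\zeta_{\lambda})_{|\alpha|+1}$, an ultrafilter in $\acfrg{\alpha t_{\lambda}}$ by Theorem~\ref{thm:ultrafilters}; then $f_{\alpha[t_{\lambda}]}(\ftg{F}_{\lambda})=(\zeta_{\lambda})_{|\alpha|}\to\xi_{|\alpha|}$ (because $\phi_{\zeta_{\lambda}}(\alpha,A,\alpha)\to\phi_{\xia}(\alpha,A,\alpha)$ for each $A$), and $\{t_{\lambda}\}$ converges to infinity, since otherwise $(\alpha b,r(\alpha b),\alpha b)$ would lie in $\zeta_{\lambda}$ frequently for some fixed $b$, forcing $(\alpha b,r(\alpha b),\alpha b)\in\xia$ against $\alpha$ being the largest word of $\xia$; this gives (b). In the second case Theorem~\ref{thm:ultrafilters}(i) yields $(\zeta_{\lambda})_{|\alpha|}\in X_{\alpha}^{sink}$, and $(\zeta_{\lambda})_{|\alpha|}\to\xi_{|\alpha|}$ as above, giving (a).

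For the converse, assuming $\xi_{|\alpha|}$ is an ultrafilter and (a) or (b) holds, I would build a net of ultrafilters of $E(S)$ converging to $\phi_{\xia}$. In case (a), let $\zeta_{\lambda}$ be the finite type filter with word $\alpha$ and associated filter $\ftg{F}_{\lambda}$ (Proposition~\ref{prop:filter.from.finite.word.and.filter}); since $\ftg{F}_{\lambda}\in X_{\alpha}^{sink}$, $\zeta_{\lambda}$ is an ultrafilter by Theorem~\ref{thm:ultrafilters}(i), and since both $\zeta_{\lambda}$ and $\xia$ have largest word $\alpha$, the characters $\phi_{\zeta_{\lambda}}$ and $\phi_{\xia}$ can only disagree on elements $(\alpha_{1,i},B,\alpha_{1,i})$, where they agree eventually because $r(B,\alpha_{i+1,|\alpha|})$ is a fixed element of $\acfra$ and $\ftg{F}_{\lambda}\to\xi_{|\alpha|}$. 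In case (b), for each $\lambda$ I take an ultrafilter $\zeta_{\lambda}$ of $E(S)$ whose word begins with $\alpha t_{\lambda}$ and has $(\zeta_{\lambda})_{|\alpha t_{\lambda}|}=\ftg{F}_{\lambda}$, so that $(\zeta_{\lambda})_{|\alpha|}=f_{\alpha[t_{\lambda}]}(\ftg{F}_{\lambda})$; granting its existence, $\phi_{\zeta_{\lambda}}$ and $\phi_{\xia}$ disagree only on elements $(\alpha_{1,i},B,\alpha_{1,i})$ (controlled by $f_{\alpha[t_{\lambda}]}(\ftg{F}_{\lambda})\to\xi_{|\alpha|}$) or on elements whose word properly extends $\alpha$ (controlled by $\{t_{\lambda}\}$ converging to infinity, which eventually makes the $(|\alpha|+1)$-st letter differ from $t_{\lambda}$), so $\phi_{\zeta_{\lambda}}\to\phi_{\xia}$. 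Either way $\phi_{\xia}$ lies in the closure of $\widehat{E(S)}_{\infty}$ and hence is tight.

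The step I expect to be the crux is the existence of the extension $\zeta_{\lambda}$ in case (b): one needs that every ultrafilter of $\acfrg{\alpha t_{\lambda}}$ occurs at level $|\alpha t_{\lambda}|$ of some ultrafilter of $E(S)$ with word beginning by $\alpha t_{\lambda}$. I would prove this by recursion: given an ultrafilter $\ftg{G}$ of some $\acfrg{\delta}$, if $\ftg{G}\in X_{\delta}^{sink}$ stop (the finite type filter with word $\delta$ and filter $\ftg{G}$ is an ultrafilter by Theorem~\ref{thm:ultrafilters}(i)); otherwise pick a letter $b$ with $r(A,b)\ne\emptyset$ for all $A\in\ftg{G}$, push the filter base $\{r(A,b):A\in\ftg{G}\}$ forward into $\acfrg{\delta b}$ (it is a filter base by weak left-resolvingness) and extend it to an ultrafilter using the Boolean prime ideal theorem. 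Since $\newf{r(\,\cdot\,,b)}{\acfrg{\delta}}{\acfrg{\delta b}}$ is a morphism of Boolean algebras, the preimage of the chosen ultrafilter of $\acfrg{\delta b}$ is again an ultrafilter of $\acfrg{\delta}$ and therefore equals $\ftg{G}$; this makes the family of ultrafilters so produced complete for the resulting (finite or infinite) word, and then Theorem~\ref{thm:ultrafilters} identifies it as an ultrafilter of $E(S)$. The remaining points are routine bookkeeping with beginnings of words and with pointwise convergence in $\{0,1\}$.
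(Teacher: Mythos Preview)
The present paper does not prove this proposition; it is quoted from \cite{Boava2016} (Proposition~6.4) and used as a black box. So there is no proof here to compare yours against. That said, your outline is the natural one --- reduce to Exel's description of $\ftight$ as the closure of ultrafilters and translate convergence of ultrafilters in $E(S)$ into convergence of the level-$|\alpha|$ data via Theorem~\ref{thm:filters.in.E(S)} and the maps $f_{\alpha[\cdot]}$ --- and it is essentially correct.

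Two small points worth tightening. First, in your primeness argument for $\xi_{|\alpha|}$, the remark ``applied with $A=r(\alpha)$'' only makes sense when $|\alpha|\geq 1$ (so that $r(\alpha)$ is the top of the Boolean algebra $\acfra$); for $\alpha=\eword$ there is in general no top element, and you should instead invoke primeness directly from your partition argument for arbitrary $A\in\xi_{0}$, which is what you actually proved. Second, in the forward direction your dichotomy ``some subnet has $|\gamma^{(\lambda)}|>|\alpha|$, or eventually $\gamma^{(\lambda)}=\alpha$'' is fine, but make explicit that after passing to the subnet you still have convergence to $\phi_{\xia}$, so that the ``$t_\lambda$ frequently equals $b$'' contradiction really does clash with the eventual value $\phi_{\xia}(\alpha b,r(\alpha b),\alpha b)=0$.

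Your recursive construction of the extension $\zeta_\lambda$ in case~(b) is the right idea and works: the key facts you use --- that $r(\,\cdot\,,b)$ is a Boolean morphism under the weakly left-resolving hypothesis, that the preimage of an ultrafilter under such a morphism is an ultrafilter (since $r(\emptyset,b)=\emptyset$ keeps it proper), and that Theorem~\ref{thm:ultrafilters} (both (i) and (ii)$'$) then identifies the resulting filter as an ultrafilter of $E(S)$ --- are all available in this paper. The levels $0,\ldots,|\alpha|$ are filled in by completeness from $\ftg{F}_\lambda$ and are ultrafilters (or empty at level~$0$) because the $f$-maps are Boolean duals; you might say this explicitly.
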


There is a more algebraic description for the tight filters in $E(S)$, given by the following theorem.

\begin{theorem}[\cite{Boava2016}, Theorem 6.7]
	\label{thm:tight.filters.in.es}
	Let $\lspace$ be a labelled space which is weakly left-resolving and whose accommodating family $\acf$ is closed under relative complements, and let $S$ be its associated inverse semigroup. Then the tight filters in $E(S)$ are:
	\begin{enumerate}[(i)]
		\item\label{item:tight.filters.in.es.infinite.type} The ultrafilters of infinite type.
		\item\label{item:tight.filters.in.es.finite.type} The filters of finite type $\xia$ such that $\xi_{|\alpha|}$ is an ultrafilter in $\acfra$ and for each  $A\in\xi_{|\alpha|}$ at least one of the following conditions hold:
		\begin{enumerate}[(a)]
			\item $\lbf(A\dgraph^1)$ is infinite.
			\item There exists $B\in\acfra$ such that $\emptyset\neq B\subseteq A\cap \dgraph^0_{sink}$.
		\end{enumerate}
	\end{enumerate}
\end{theorem}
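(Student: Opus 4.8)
The plan is to deduce the theorem from two facts already recalled above: the classification of tight filters of finite type in Proposition~\ref{prop:tight.filters.of.finite.type}, and the fact (\cite{Boava2016}, Corollary~6.2) that, when $\acf$ is closed under relative complements, the only tight filters of infinite type are the ultrafilters of infinite type. Since every ultrafilter is tight (\cite{MR2419901}, Proposition~12.7), the latter gives item~\ref{item:tight.filters.in.es.infinite.type} at once. The remaining content is item~\ref{item:tight.filters.in.es.finite.type}: I must show that, for a filter $\xia$ of finite type with $\xi_{|\alpha|}$ an ultrafilter in $\acfra$, the disjunction ``(a) or (b)'' of Proposition~\ref{prop:tight.filters.of.finite.type} is equivalent to the condition that for every $A\in\xi_{|\alpha|}$ either $\lbf(A\dgraph^1)$ is infinite, or there is $B\in\acfra$ with $\emptyset\neq B\subseteq A\cap\dgraph^0_{sink}$, where $\dgraph^0_{sink}$ is the set of sinks. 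The tools I would use throughout are: the identity $\lbf(C\dgraph^1)=\{a\in\alf\mid r(C,a)\neq\emptyset\}$ from Subsection~\ref{subsection:labelled.spaces} (so that $\lbf(B\dgraph^1)=\emptyset$ is exactly the statement $B\subseteq\dgraph^0_{sink}$); weak left-resolvingness, which gives $r(A\cap C,\gamma)=r(A,\gamma)\cap r(C,\gamma)$; and the fact that, $\acf$ being closed under relative complements, each $\acfrg{\gamma}$ is a (generalized) Boolean algebra, so that nonzero elements lie in ultrafilters and $X_\gamma$ carries the Stone topology, whose basic neighbourhoods of an ultrafilter $\ft$ are the sets $\{\ftg{G}\mid C\in\ftg{G}\}$ with $C\in\ft$.

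For the implication from the algebraic condition to ``(a) or (b)'' I would split into two cases. If $\lbf(A\dgraph^1)$ is infinite for \emph{every} $A\in\xi_{|\alpha|}$, I would build a net witnessing~(b): index by pairs $\lambda=(\mathcal A,F)$ with $\mathcal A$ a nonempty finite subset of $\xi_{|\alpha|}$ and $F$ a finite subset of $\alf$, ordered by inclusion in each coordinate; put $A_\lambda=\bigcap\mathcal A\in\xi_{|\alpha|}$; pick $t_\lambda\in\lbf(A_\lambda\dgraph^1)\setminus F$, nonempty because $\lbf(A_\lambda\dgraph^1)$ is infinite; and take $\ftg{F}_\lambda$ to be any ultrafilter in $\acfrg{\alpha t_\lambda}$ containing the nonzero element $r(A_\lambda,t_\lambda)$. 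Then $r(A',t_\lambda)\supseteq r(A_\lambda,t_\lambda)$ lies in $\ftg{F}_\lambda$, hence $A'\in f_{\alpha[t_\lambda]}(\ftg{F}_\lambda)$, for every $A'\in\mathcal A$; so $f_{\alpha[t_\lambda]}(\ftg{F}_\lambda)\to\xi_{|\alpha|}$, while enlarging $F$ forces $t_\lambda$ to infinity. If instead some $A_0\in\xi_{|\alpha|}$ has $\lbf(A_0\dgraph^1)$ finite, I would verify~(a), i.e.\ that $\xi_{|\alpha|}\in\overline{X_{\alpha}^{sink}}$: given $A\in\xi_{|\alpha|}$, apply the algebraic condition to $A\cap A_0\in\xi_{|\alpha|}$; since $\lbf((A\cap A_0)\dgraph^1)\subseteq\lbf(A_0\dgraph^1)$ is finite, the first alternative fails, so there is $B\in\acfra$ with $\emptyset\neq B\subseteq(A\cap A_0)\cap\dgraph^0_{sink}\subseteq A$, and any ultrafilter $\ft$ containing $B$ then lies in $X_{\alpha}^{sink}$ (as $r(B,b)=\emptyset$ for all $b$) and in the basic neighbourhood $\{\ftg{G}\mid A\in\ftg{G}\}$ of $\xi_{|\alpha|}$.

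For the converse I would fix $A\in\xi_{|\alpha|}$ and argue by case. If~(b) holds, then eventually $A\in f_{\alpha[t_\lambda]}(\ftg{F}_\lambda)$, that is $r(A,t_\lambda)\neq\emptyset$, i.e.\ $t_\lambda\in\lbf(A\dgraph^1)$; since $\{t_\lambda\}$ converges to infinity it takes infinitely many values on every tail, so $\lbf(A\dgraph^1)$ is infinite. If~(a) holds, then $\xi_{|\alpha|}\in\overline{X_{\alpha}^{sink}}$, so the neighbourhood $\{\ftg{G}\mid A\in\ftg{G}\}$ contains some $\ft\in X_{\alpha}^{sink}$; if $\lbf(A\dgraph^1)$ is infinite we are done, and otherwise, writing $\lbf(A\dgraph^1)=\{b_1,\dots,b_n\}$, I would use $\ft\in X_{\alpha}^{sink}$ to pick $C_i\in\ft$ with $r(C_i,b_i)=\emptyset$ for each $i$ and set $B=A\cap C_1\cap\dots\cap C_n\in\ft$: this $B$ is nonzero, contained in $A$, and has $r(B,b_i)=\emptyset$ for all $i$ while $\lbf(B\dgraph^1)\subseteq\lbf(A\dgraph^1)$, so $\lbf(B\dgraph^1)=\emptyset$, i.e.\ $\emptyset\neq B\subseteq A\cap\dgraph^0_{sink}$. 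Combined with Proposition~\ref{prop:tight.filters.of.finite.type} this gives item~\ref{item:tight.filters.in.es.finite.type}.

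The step I expect to be the main obstacle is exactly the reconciliation between the \emph{single} global net in Proposition~\ref{prop:tight.filters.of.finite.type} and the \emph{per-element} disjunction in the theorem: one cannot directly read off a uniform net, and the crucial observation is that the mere existence of one $A_0\in\xi_{|\alpha|}$ with finite label set pins down alternative~(a) for \emph{every} $A$, via the auxiliary intersections $A\cap A_0$, while its absence forces alternative~(b); dually, one must extract from a single $X_{\alpha}^{sink}$-point near $\xi_{|\alpha|}$ an honest subset of $\dgraph^0_{sink}$ below each prescribed $A$, which is where the finiteness of $\lbf(A\dgraph^1)$ is essential. The rest --- that the index set is directed, that the required ultrafilters exist, that $r(A_\lambda,t_\lambda)\in\acfrg{\alpha t_\lambda}$ (from $\acf$ being closed under relative ranges together with $r(A_\lambda,t_\lambda)\subseteq r(\alpha t_\lambda)$), and the routine edge reading when $\alpha=\eword$ --- is straightforward bookkeeping.
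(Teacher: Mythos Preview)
Your argument is correct, but note that the paper does not actually prove this theorem: it is quoted from \cite{Boava2016} as a preliminary result, with no proof given in the present text. There is thus no ``paper's own proof'' to compare against here.

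That said, your deduction of the statement from Proposition~\ref{prop:tight.filters.of.finite.type} (itself cited from \cite{Boava2016}) and from Corollary~6.2 of \cite{Boava2016} is sound. The only point worth tightening is the remark that basic neighbourhoods of $\xi_{|\alpha|}$ in $X_\alpha$ are exactly the sets $\{\ftg{G}\mid A\in\ftg{G}\}$ for $A\in\xi_{|\alpha|}$: strictly speaking a subbasic neighbourhood may also impose conditions $D\notin\ftg{G}$, but since $\xi_{|\alpha|}$ is an ultrafilter and $\acfra$ is a (generalized) Boolean algebra, each such condition is absorbed by intersecting with a suitable element of $\xi_{|\alpha|}$, so your reduction is valid. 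Everything else --- the net construction in the ``all label sets infinite'' case, the pivot on a single $A_0$ with finite label set, and the extraction of a sink-subset $B$ from an $X_\alpha^{sink}$-point in the given basic neighbourhood --- is carried out correctly.
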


Finally, the next result shows the relation between the tight spectrum $\ftight$ and the boundary path space of a directed graph (see also \cite{Boava2016}, Example 6.8).

\begin{proposition}[\cite{Boava2016}, Proposition 6.9]
	\label{prop:tight.spectrum.is.boundary.path.space}
	Let $\lgraph$ be a left-resolving labelled graph such that $\dgraph^0$ is a finite set and let  $\acf=\powerset{\dgraph^0}$. Then the tight spectrum $\ftight$ of the labelled space $\lspace$ is homeomorphic to the boundary path space $\partial\dgraph$ of the graph $\dgraph$.
\end{proposition}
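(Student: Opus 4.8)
The plan is to exhibit an explicit bijection $\Phi\colon\partial\dgraph\to\ftight$ and to verify it is a homeomorphism by comparing (sub-)basic open sets on each side. Note first that the hypotheses of Theorems \ref{thm:ultrafilters} and \ref{thm:tight.filters.in.es} are met here: left-resolving implies weakly left-resolving, and $\acf=\powerset{\dgraph^0}$ is a Boolean algebra, hence closed under relative complements. The crucial preliminary remark is that, since $\dgraph^0$ is finite and $\acf=\powerset{\dgraph^0}$, every nonempty $\acfra$ equals $\powerset{r(\alpha)}$, a finite Boolean algebra whose ultrafilters are exactly the principal ones $\widehat{v}:=\{A\subseteq r(\alpha)\mid v\in A\}$, $v\in r(\alpha)$; so $X_{\alpha}$ may be identified with $r(\alpha)$. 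Because $\lgraph$ is left-resolving, for every $\alpha\in\awstar$ and $v\in r(\alpha)$ there is a \emph{unique} path $\lambda(\alpha,v)$ on $\dgraph$ with $\lbf(\lambda(\alpha,v))=\alpha$ and range $v$ (for $\alpha=\eword$, the length-$0$ path $v$), and directly from $f_{\alpha[\beta]}(\ft)=\{A\in\acfra\mid r(A,\beta)\in\ft\}$ one computes that $f_{\alpha[\beta]}(\widehat{v})=\widehat{s(\nu)}$, where $\nu$ is the unique representative of $\beta$ with range $v$ (which automatically has $s(\nu)\in r(\alpha)$). Iterating, for fixed $\alpha\in\awleinf$ a complete family of \emph{ultrafilters} for $\alpha$ is exactly the same datum as a path of $\dgraph$ labelled by $\alpha$: the $n$-th ultrafilter is $\widehat{r(\lambda_{1,n})}$, with the convention $r(\lambda_{1,0}):=s(\lambda)$.

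Feeding this dictionary into Theorem \ref{thm:ultrafilters}\,(ii)' and Theorem \ref{thm:tight.filters.in.es}\,(i) shows that the infinite-type tight filters of $E(S)$ are exactly the infinite-type ultrafilters, and these correspond bijectively to the infinite paths of $\dgraph$ (in this left-resolving setting $\xi_0=\widehat{s(\lambda)}$ is automatically an ultrafilter, never the empty set). For a finite-type tight filter $\xia$, Theorem \ref{thm:tight.filters.in.es}\,(ii) forces $\xi_{|\alpha|}=\widehat{v}$ for some $v\in r(\alpha)$ such that for every $A$ with $v\in A\subseteq r(\alpha)$ one has either $\lbf(A\dgraph^1)$ infinite or $\emptyset\neq\{v\}\subseteq A\cap\dgraph^0_{sink}$. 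Taking $A=\{v\}$, and using that in a left-resolving labelled graph with $\card{\dgraph^0}<\infty$ a vertex $v$ is an infinite emitter if and only if $\lbf(s^{-1}(v))$ is infinite (two edges leaving $v$ with a common label would have distinct ranges, of which there are finitely many), this condition is equivalent to $v$ being singular; conversely, if $v$ is singular it plainly holds for every such $A$. Hence the finite-type tight filters correspond bijectively to the pairs $(\alpha,v)$ with $v\in r(\alpha)$ singular (the case $\alpha=\eword$ contributing the singular vertices), hence, via $\lambda(\alpha,v)$, to the finite paths of $\dgraph$ ending at a singular vertex. Recalling that, by Webster's description, $\partial\dgraph$ consists of $\dgraph^{\infty}$ together with the finite paths ending at a singular vertex \cite{MR3119197}, the two cases assemble into the claimed bijection $\Phi$, which sends a path $\lambda$ to the filter whose complete family is given by the principal ultrafilters at the consecutive vertices of $\lambda$.

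It remains to check that $\Phi$ is a homeomorphism. A sub-basic open set of $\ftight$ has the form $\{\xi\mid p\in\xi\}$ or $\{\xi\mid p\notin\xi\}$ with $p=(\gamma,A,\gamma)\in E(S)$, and unwinding the dictionary yields that $p\in\Phi(\lambda)$ if and only if $\gamma$ is a beginning of $\lbf(\lambda)$ and $r(\lambda_{1,|\gamma|})\in A$. As $A$ is finite and, by left-resolving, for each $v\in A$ there is at most one path $\mu$ with $\lbf(\mu)=\gamma$ and range $v$, the set $\Phi^{-1}(\{\xi\mid p\in\xi\})$ is a finite union of cylinders $Z(\mu)=\{x\in\partial\dgraph\mid\mu\text{ is a beginning of }x\}$, hence open; and since each finite-length cylinder is clopen in $\partial\dgraph$, $\Phi^{-1}(\{\xi\mid p\notin\xi\})$ is open as well, so $\Phi$ is continuous. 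For openness, a basic open set of $\partial\dgraph$ has the form $Z(\mu)\setminus\bigcup_{e\in F}Z(\mu e)$ with $\mu$ a finite path and $F$ a finite set of edges with source $r(\mu)$; setting $p=(\lbf(\mu),\{r(\mu)\},\lbf(\mu))$ and $q_e=(\lbf(\mu e),\{r(\mu e)\},\lbf(\mu e))$, left-resolving gives $\lambda\in Z(\mu)\Leftrightarrow p\in\Phi(\lambda)$ and $\lambda\in Z(\mu e)\Leftrightarrow q_e\in\Phi(\lambda)$, so $\Phi$, being a bijection, satisfies $\Phi\bigl(Z(\mu)\setminus\bigcup_{e\in F}Z(\mu e)\bigr)=\{\xi\in\ftight\mid p\in\xi\text{ and }q_e\notin\xi\text{ for all }e\in F\}$, an open set. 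Therefore $\Phi$ is a continuous open bijection, hence a homeomorphism.

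I expect the main obstacle to be the reduction of the tightness conditions of Theorem \ref{thm:tight.filters.in.es}\,(ii) to the single requirement ``$v$ is singular'': this is precisely where one genuinely needs both that $\lgraph$ is left-resolving and that $\card{\dgraph^0}<\infty$ (to exclude an infinite emitter whose outgoing edges carry only finitely many labels), and where one must check that the condition, once verified for $A=\{v\}$, propagates to every $A$ with $v\in A$. A secondary technical point is keeping all relevant unions finite when translating between cylinders in $\partial\dgraph$ and membership conditions in $E(S)$ --- again via left-resolving and finiteness of $\dgraph^0$ --- together with the standard fact that finite-length cylinders are clopen in the boundary path space. The remaining items --- that $\Phi(\lambda)$ is indeed a tight filter and that $\Phi$ is injective --- reduce to routine bookkeeping with Theorems \ref{thm:filters.in.E(S)}, \ref{thm:ultrafilters} and \ref{thm:tight.filters.in.es}.
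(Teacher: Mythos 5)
Your argument is correct. Note that the paper itself does not prove this proposition --- it is quoted verbatim from \cite{Boava2016} (Proposition 6.9) as background --- so there is no in-paper proof to compare against; your route (ultrafilters in $\acfra=\powerset{r(\alpha)}$ are principal, so $X_\alpha\cong r(\alpha)$; left-resolving makes complete families of ultrafilters the same data as paths of $\dgraph$; Theorems \ref{thm:ultrafilters} and \ref{thm:tight.filters.in.es} then reduce tightness in the finite-length case to ``the terminal vertex is singular''; cylinders match the sub-basic sets $\{\xi\mid p\in\xi\}$) is exactly the natural dictionary one expects the cited reference to use, and your handling of the two delicate points --- that an infinite emitter must emit infinitely many labels when $\dgraph^0$ is finite and the graph is left-resolving, and that the tightness condition checked at $A=\{v\}$ propagates to all $A\ni v$ --- is sound.
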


\section{C*-algebras of labelled spaces}\label{section:labelled.space.c*-algebras}

In this section, we present the C*-algebra associated with a labelled space, following \cite{ETS:9992065}.

\begin{definition}[\cite{ETS:9992065}, Definition 2.1]\label{def:c*-algebra.labelled.space.v3}
	Let $\lspace$ be a weakly left-resolving labelled space. The \emph{C*-algebra associated with} $\lspace$, denoted by $C^*\lspace$, is the universal $C^*$-algebra generated by projections $\{p_A \ | \ A\in \acf\}$ and partial isometries $\{s_a \ | \ a\in\alf\}$ subject to the relations
	\begin{enumerate}[(i)]
		\item $p_{A\cap B}=p_Ap_B$, $p_{A\cup B}=p_A+p_B-p_{A\cap B}$ and $p_{\emptyset}=0$, for every $A,B\in\acf$;
		\item $p_As_a=s_ap_{r(A,a)}$, for every $A\in\acf$ and $a\in\alf$;
		\item $s_a^*s_a=p_{r(a)}$ and $s_b^*s_a=0$ if $b\neq a$, for every $a,b\in\alf$;
		\item For every $A\in\acf$ for which $0<\card{\lbf(A\dgraph^1)}<\infty$ and there does not exist $B\in\acf$ such that $\emptyset\neq B\subseteq A\cap \dgraph^0_{sink}$,
		\[p_A=\sum_{a\in\lbf(A\dgraph^1)}s_ap_{r(A,a)}s_a^*.\]
	\end{enumerate}
\end{definition}

\begin{remark}\label{remark.condition.iv}
	In \cite{MR2304922}, Bates and Pask originally defined the C*-algebra associated with a labelled space in a different way, with item (iv) above being replaced with
	\begin{enumerate}[(iv)]
	\item For every $A\in\acf$ such that $0<\card{\lbf(A\dgraph^1)}<\infty$,
		\[p_A=\sum_{a\in\lbf(A\dgraph^1)}s_ap_{r(A,a)}s_a^*.\]
	\end{enumerate}
	In \cite[Remark 2.4]{ETS:9992065}, Bates, Pask and Carlsen observed that this definition would lead to zero vertex projections for sinks. Therefore, they proposed a new definition modifying item (iv). In a preprint version of \cite{ETS:9992065}, they proposed
	\begin{enumerate}[(iv)]
		\item For every $A\in\acf$ such that $0<\card{\lbf(A\dgraph^1)}<\infty$ and $A\cap\dgraph^0_{sink}=\emptyset$,
			\[p_A=\sum_{a\in\lbf(A\dgraph^1)}s_ap_{r(A,a)}s_a^*.\]
	\end{enumerate}
	When we begun this work, the published version of \cite{ETS:9992065} containing Definition \ref{def:c*-algebra.labelled.space.v3} was not yet available and, indeed, one of the goals of this work was to point out that Definition \ref{def:c*-algebra.labelled.space.v3} of the C*-algebra associated with a labelled space is more adequate than the previously given definitions. Item (iv) looks like the relation $\sum_{i=1}^n s_is_i^*=1$ in the Cuntz algebra $\mathcal{O}_n$. To classify a definition as adequate or inadequate is sometimes difficult -- what would the criteria be? In \cite{MR2419901}, Exel applied the theory of tight filters associated with inverse semigroups to show that several C*-algebras obtained from combinatorial objects created since the Cuntz algebras are, in fact,  C*-algebras of inverse semigroups acting on their tight spectra. We believe this establishes a good criterion to choose the adequate definition for the C*-algebra of a labelled space, and we expected Definition \ref{def:c*-algebra.labelled.space.v3} to be the adequate one, due to item (ii)(b) of Theorem \ref{thm:tight.filters.in.es}. After the first preprint of this work, Carlsen pointed out to us that our expected definition was indeed equivalent to the one present in the published version of \cite{ETS:9992065}. In section \ref{section:example.definitions.different}, we present an example that shows Definition \ref{def:c*-algebra.labelled.space.v3} is different from that in the preprint version of \cite{ETS:9992065}.
\end{remark}

One of the points of this paper is to validate Definition \ref{def:c*-algebra.labelled.space.v3}. With Proposition \ref{prop:properties.of.c*-labelled.space} and Section \ref{section:representations}, we justify our choice for the inverse semigroup associated with a labelled space, presented in Section \ref{subsection:inverse.semigroup} and, with Section \ref{section:diagonal.c*-algebra}, we show that Definition \ref{def:c*-algebra.labelled.space.v3} is indeed related to the tight spectrum of the aforementioned inverse semigroup.

Now, we develop some basic properties of $C^*\lspace$; most of these were already discussed by Bates and Pask in \cite{MR2304922}.

Let $\lspace$ be a weakly left-resolving labelled space and consider its C*-algebra $C^*\lspace$. For each word $\alpha=a_1a_2\cdots a_n$, define $s_\alpha=s_{a_1}s_{a_2}\cdots s_{a_n}$; we also set $s_{\eword}=1$, where $\eword$ is the empty word.

\begin{remark}
	$C^*\lspace$ is generated by elements $s_a$ and $p_A$. Thus, the elements $s_{\alpha}$ defined above belong to $C^*\lspace$ if $\alpha$ is not the empty word. On the other hand, $s_\eword$ does not belong to $C^*\lspace$ unless it is unital. We work with $s_{\eword}$ to simplify our statements; for example, $s_{\eword}p_{A}s_{\eword}^*$ means $p_{A}$. We never use $s_{\eword}$ alone.
\end{remark}

\begin{proposition}\label{prop:properties.of.c*-labelled.space}
	The following properties are valid in $C^*\lspace$.
	\begin{enumerate}[(i)]
		\item If $\alpha\notin\awstar$, then $s_{\alpha}=0$.
		\item $p_As_\alpha=s_\alpha p_{r(A,\alpha)}$, for every $A\in\acf$ and $\alpha\in\awstar$.
		\item $s_\alpha^*s_\alpha=p_{r(\alpha)}$ and $s_\beta^*s_\alpha=0$ if $\beta$ and $\alpha$ are not comparable, for every $\alpha,\beta\in\awplus$.
		\item For every $\alpha\in\awplus$, $s_{\alpha}$ is a partial isometry.
		\item Let $\alpha,\beta\in\awstar$ and $A\in\acf$. If $s_\alpha p_A s_\beta^*\neq 0$, then $A\cap r(\alpha)\cap r(\beta)\neq\emptyset$ and $s_\alpha p_A s_\beta^* = s_\alpha p_{A\cap r(\alpha)\cap r(\beta)} s_\beta^*$. 
		\item Let $\alpha,\beta,\gamma,\delta\in\awstar$, $A\in\acfrg{\alpha}\cap\acfrg{\beta}$ and $B\in\acfrg{\gamma}\cap\acfrg{\delta}$. Then
		\[(s_\alpha p_A s_\beta^*)(s_\gamma p_B s_\delta^*) = \left\{ 
		\begin{array}{ll}
		s_{\alpha\gamma'}p_{r(A,\gamma')\cap B}s_{\delta}^*, & \mbox{if} \ \gamma=\beta\gamma', \\
		s_{\alpha}p_{A\cap r(B,\beta')}s_{\delta\beta'}^*, & \mbox{if} \ \beta=\gamma\beta', \\
		0, & \mbox{otherwise}. 
		\end{array}\right.\]
		In particular, $s_\alpha p_A s_\beta^*$ is a partial isometry.
		\item Every non-zero finite product of terms of types $s_a$, $p_B$ and $s_b^*$ can be written as $s_\alpha p_A s_\beta^*$, where $A\in \acfrg{\alpha}\cap\acfrg{\beta}$.
		\item $C^*\lspace = \overline{\mathrm{span}}\{s_\alpha p_A s_\beta^* \ | \ \alpha,\beta\in\awstar \ \mbox{and} \ A\in\acfrg{\alpha}\cap\acfrg{\beta}\}$.
		\item The elements of the form $s_\alpha p_A s_\alpha^*$, where $\alpha\in\awstar$, are commuting projections. Furthermore,
		\[(s_\alpha p_A s_\alpha^*)(s_\beta p_B s_\beta^*) = \left\{ 
		\begin{array}{ll}
		s_{\beta}p_{r(A,\beta')\cap B}s_{\beta}^*, & \mbox{if} \ \beta=\alpha\beta', \\
		s_{\alpha}p_{A\cap r(B,\alpha')}s_{\alpha}^*, & \mbox{if} \ \alpha=\beta\alpha', \\
		0, & \mbox{otherwise}. 
		\end{array}\right.\]
	\end{enumerate}
\end{proposition}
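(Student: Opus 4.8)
The plan is to derive all nine items by direct manipulation of the four defining relations of Definition~\ref{def:c*-algebra.labelled.space.v3}, together with the elementary properties of relative ranges recalled in Subsection~\ref{subsection:labelled.spaces}---chiefly that $r(r(A,\alpha),\beta)=r(A,\alpha\beta)$, that $\acf$ is closed under finite intersections and under relative ranges, and that $r(\alpha)=\emptyset$ exactly when $\alpha\notin\awplus$. The items will be proved in the stated order, each one building on those before it. The only genuinely inductive argument is (vii), and I expect the mild bookkeeping there---keeping the middle projection of an expression $s_\alpha p_A s_\beta^*$ ``normalized'' so that $A\subseteq r(\alpha)\cap r(\beta)$, and handling the empty word $\eword$, for which $s_\eword=1$ is only a formal symbol---to be the one place where any care is needed; there is no serious obstacle anywhere.

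\emph{Items (i)--(iv).} An induction on $|\alpha|$, using the defining relations $p_As_a=s_ap_{r(A,a)}$ and $s_a^*s_a=p_{r(a)}$ together with $r(r(\alpha_{1,i}),\alpha_{i+1})=r(\alpha_{1,i+1})$, collapses the telescoping product $s_\alpha^*s_\alpha=s_{\alpha_n}^*\cdots s_{\alpha_1}^*s_{\alpha_1}\cdots s_{\alpha_n}$ to $p_{r(\alpha)}$ for \emph{every} non-empty word $\alpha$ over $\alf$. Since $r(\alpha)=\emptyset$ precisely when $\alpha\notin\awplus$ and $p_\emptyset=0$, this gives (i) and the first half of (iii) simultaneously, and then (iv) is immediate because an element $x$ of a C*-algebra is a partial isometry if and only if $x^*x$ is a projection, $p_{r(\alpha)}$ being one. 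The same induction, started from $p_As_a=s_ap_{r(A,a)}$, gives (ii). For the second half of (iii), let $\mu$ be the longest common beginning of the incomparable words $\alpha,\beta\in\awplus$, so $\alpha=\mu a\alpha'$ and $\beta=\mu b\beta'$ with $a\neq b$; then $s_\beta^*s_\alpha=s_{\beta'}^*s_b^*(s_\mu^*s_\mu)s_as_{\alpha'}=s_{\beta'}^*s_b^*p_{r(\mu)}s_as_{\alpha'}$, and moving $p_{r(\mu)}$ past $s_a$ via $p_As_a=s_ap_{r(A,a)}$ exposes the factor $s_b^*s_a=0$ (the case $\mu=\eword$ being trivial).

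\emph{Items (v), (vi), (ix).} For (v), write $s_\alpha=s_\alpha(s_\alpha^*s_\alpha)=s_\alpha p_{r(\alpha)}$ (trivial if $\alpha=\eword$), so $s_\alpha p_As_\beta^*=s_\alpha p_{A\cap r(\alpha)}s_\beta^*$, and symmetrically on the right; if $A\cap r(\alpha)\cap r(\beta)=\emptyset$ the resulting expression is sandwiched around $p_\emptyset=0$, which is the contrapositive of the claim. With (v) in hand, (vi) reduces to evaluating $s_\beta^*s_\gamma$: if $\gamma=\beta\gamma'$ then $s_\beta^*s_\gamma=p_{r(\beta)}s_{\gamma'}$ and one finishes using (ii) and $A\subseteq r(\beta)$; the case $\beta=\gamma\beta'$ is the adjoint of this; and otherwise $\beta,\gamma$ are incomparable, so $s_\beta^*s_\gamma=0$ by (iii). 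That $s_\alpha p_As_\beta^*$ is a partial isometry then follows from $(s_\alpha p_As_\beta^*)^*(s_\alpha p_As_\beta^*)=s_\beta p_{A\cap r(\alpha)}s_\beta^*=s_\beta p_As_\beta^*$, which is a self-adjoint idempotent by the same sort of computation. Finally (ix) is the specialization of (vi) with its second and fourth entries both equal to $\beta$ (after renaming): each of the three branches returns an element of the form $s_\mu p_Cs_\mu^*$, which is manifestly self-adjoint and, since $p_Cp_C=p_{C\cap C}=p_C$, idempotent; and commutativity is automatic, because $\bigl((s_\alpha p_As_\alpha^*)(s_\beta p_Bs_\beta^*)\bigr)^*$ equals both that product (being self-adjoint) and the product taken in the opposite order.

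\emph{Items (vii), (viii).} Item (vii) is proved by induction on the number of factors. A single factor is put into the required form by $s_a=s_ap_{r(a)}s_\eword^*$, $p_B=s_\eword p_Bs_\eword^*$ and $s_b^*=s_\eword p_{r(b)}s_b^*$. For the inductive step one multiplies a normal form $s_\alpha p_As_\beta^*$ on the right by one more generator: a factor $p_B$ is absorbed via the adjoint of $p_As_a=s_ap_{r(A,a)}$; a factor $s_b^*$ via $s_\beta^*s_b^*=s_{b\beta}^*$; and a factor $s_a$ via the relation $s_b^*s_a\in\{0,p_{r(a)}\}$ (with $b$ the last letter of $\beta$) together with $p_As_a=s_ap_{r(A,a)}$---re-normalizing with (v) in every case. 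Since the whole product is assumed non-zero, none of these steps can produce $0$, and each leaves an expression of the required shape. Item (viii) is then immediate: $C^*\lspace$ is the closed linear span of all finite products of the generators $s_a$, $s_a^*$ and $p_A=p_A^*$; by (vii) each non-zero such product equals some $s_\alpha p_As_\beta^*$ with $A\in\acfrg{\alpha}\cap\acfrg{\beta}$; and conversely, by (vi) the linear span of these elements is a $\ast$-subalgebra, whose closure is therefore all of $C^*\lspace$.
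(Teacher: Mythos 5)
Your proposal is correct and takes essentially the same route as the paper's proof: the same inductive use of the defining relations to get $s_\alpha^*s_\alpha=p_{r(\alpha)}$, $p_As_\alpha=s_\alpha p_{r(A,\alpha)}$ and the orthogonality for incomparable words, the same normalization-by-rewriting argument for (vii)--(viii), and the same specialization of (vi) for (ix), with only cosmetic differences (deriving (i) via the C*-identity from $p_{r(\alpha)}=0$, and proving commutativity in (ix) by taking adjoints rather than by symmetry of the explicit formula). One tiny slip worth fixing: when a factor $s_a$ is absorbed into $s_\alpha p_A s_\beta^*$, since $s_\beta^*=s_{\beta_{|\beta|}}^*\cdots s_{\beta_1}^*$ the letter meeting $s_a$ is the \emph{first} letter $\beta_1$ of $\beta$ (so the word $\beta$ gets shortened at its front), not the last letter; this does not affect the argument.
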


\begin{proof}
	\begin{enumerate}[(i)]
		\item If $\alpha = a_1a_2\cdots a_n \notin \awstar$, then $r(\cdots r(r(a_1), a_2)\cdots,a_n)=\emptyset$. Therefore, by using that $s_{a_1}$ is a partial isometry and items (i), (ii) and (iii) of Definition \ref{def:c*-algebra.labelled.space.v3}, we have
		\[s_\alpha = s_{a_1}s_{a_2}\cdots s_{a_n} = s_{a_1}p_{r(a_1)}s_{a_2}\cdots s_{a_n} = s_{a_1}s_{a_2}\cdots s_{a_n}p_{r(\cdots r(r(a_1), a_2)\cdots,a_n)} = 0.\]
		
		\item This is clear from item (ii) of Definition \ref{def:c*-algebra.labelled.space.v3}.
		
		\item The first equality follows by induction using that $s_{ab}^*s_{ab}=s_b^*s_a^*s_as_b = s_b^*p_{r(a)}s_b = s_b^*s_bp_{r(r(a),b)} = p_{r(b)}p_{r(r(a),b)} = p_{r(r(a),b)} = p_{r(ab)}$. To see the second one, suppose $\alpha,\beta\in\awplus$ are not comparable, that is, there are $\alpha',\beta',\gamma\in\awstar$ and $a,b\in\alf$ with $a\neq b$ such that $\alpha=\gamma a \alpha'$ and $\beta=\gamma b\beta'$. Therefore,
		\[s_\beta^*s_\alpha = (s_\gamma s_b s_{\beta'})^*(s_\gamma s_a s_{\alpha'}) = s_{\beta'}^* s_b^* s_\gamma^*  s_\gamma s_a s_{\alpha'} = s_{\beta'}^* s_b^* p_{r(\gamma)} s_a s_{\alpha'} = s_{\beta'}^* s_b^* s_a p_{r(r(\gamma),a)} s_{\alpha'} = 0,\]
		since $s_b^* s_a=0$.
		
		\item It follows by the previous item, since $s_\alpha^*s_\alpha$ is a projection.
		
		\item Since $s_\alpha p_A s_\beta^* = s_\alpha p_{r(\alpha)}p_A p_{r(\beta)} s_\beta^* = s_\alpha p_{r(\alpha)\cap A \cap r(\beta)} s_\beta^*$, then $r(\alpha)\cap A \cap r(\beta)\neq\emptyset$ if $s_\alpha p_A s_\beta^*\neq 0$. 
		
		\item If $\beta$ and $\gamma$ are not comparable, then $(s_\alpha p_A s_\beta^*)(s_\gamma p_B s_\delta^*) = 0$, by item (iii). Now, suppose $\gamma = \beta\gamma'$ (the other case is similar and both cases coincide if $\beta=\gamma$). Then
		\dmal{(s_\alpha p_A s_\beta^*)(s_\gamma p_B s_\delta^*) &= s_\alpha p_A s_\beta^*s_\beta s_{\gamma'} p_B s_\delta^* = s_\alpha p_A p_{r(\beta)} s_{\gamma'} p_B s_\delta^* \\ &= s_\alpha p_A s_{\gamma'} p_B s_\delta^* = s_\alpha s_{\gamma'} p_{r(A,\gamma')}  p_B s_\delta^* = s_{\alpha\gamma'} p_{r(A,\gamma')\cap B} s_\delta^*.}
		Applying this product rule, we see that $s_\alpha p_A s_\beta^*$ is a partial isometry, since
		\[(s_\alpha p_A s_\beta^*)(s_\alpha p_A s_\beta^*)^*(s_\alpha p_A s_\beta^*) = (s_\alpha p_A s_\beta^*)(s_\beta p_A s_\alpha^*)(s_\alpha p_A s_\beta^*) = (s_\alpha p_A s_\alpha^*)(s_\alpha p_A s_\beta^*) = s_\alpha p_A s_\beta^*.\]
		
		\item Consider a non-zero product as in the statement. If we have a $s_b^*$ on the left of a $s_a$, we must have $a=b$, since the product is non-zero. In this case, we can replace $s_a^*s_a$ by $p_{r(a)}$. If we have $p_B$ on the left of a $s_a$, we can replace $p_Bs_a$ by $s_ap_{r(B,a)}$. Similarly, we can replace $s_b^*p_B$ by $p_{r(B,b)}s_b^*$. Applying these replacements whenever possible, we end up with a product like
		\[s_{a_1}s_{a_2}\cdots s_{a_n}p_{B_1}p_{B_2}\cdots p_{B_m} s_{b_k}^*s_{b_{k-1}}^*\cdots s_{b_1}^*.\]
		Taking $\alpha = a_1a_2\cdots a_n$, $A = B_1\cap\cdots\cap B_m$ and $\beta = b_1b_2\cdots b_k$, the product reduces to $s_\alpha p_A s_\beta^*$ and, by item (v), we can suppose $A\in \acfrg{\alpha}\cap\acfrg{\beta}$.
		
		\item Immediate from the previous item.
		
		\item Applying item (vi) to the product $(s_\alpha p_A s_\alpha^*)(s_\beta p_B s_\beta^*)$, we obtain
		\[(s_\alpha p_A s_\alpha^*)(s_\beta p_B s_\beta^*) = \left\{ 
		\begin{array}{ll}
		s_{\beta}p_{r(A,\beta')\cap B}s_{\beta}^*, & \mbox{if} \ \beta=\alpha\beta', \\
		s_{\alpha}p_{A\cap r(B,\alpha')}s_{\alpha}^*, & \mbox{if} \ \alpha=\beta\alpha', \\
		0, & \mbox{otherwise}. 
		\end{array}\right.\]
		Interchanging $\alpha$ and $A$ with $\beta$ and $B$, it is clear that $s_\alpha p_A s_\alpha^*$ commutes with $s_\beta p_B s_\beta^*$. Finally, taking $\beta=\alpha$ and $B=A$, we see that $s_\alpha p_A s_\alpha^*$ is a projection.
	\end{enumerate}
\end{proof}

Consider the subset $R=\{s_\alpha p_A s_\beta^* \ | \ \alpha,\beta\in\awstar \ \mbox{and} \ A\in\acfrg{\alpha}\cap\acfrg{\beta}\}$ of $C^*\lspace$.  Properties (v) to (viii) from the previous proposition say that $R$ is a semigroup whose linear span is dense in $C^*\lspace$. The inverse semigroup in Section \ref{subsection:inverse.semigroup} was defined based on $R$, but they might not be isomorphic: two idempotents of $S$ may give the same element in the C*-algebra, as in Example \ref{example:different.definitions} (there, triples of the form $(a^n,\dgraph^0,a^n)$ are all different for $n\geq 1$, but $s_{a^n}p_{\dgraph^0}s^*_{a^n}=1$  for all $n\geq 1$).

We finish this section presenting a C*-subalgebra of $C^*\lspace$ that is commutative and will be studied in Section \ref{section:diagonal.c*-algebra}.

\begin{definition}\label{def:diagonal.subalgebra}
	Let $\lspace$ be a weakly left-resolving labelled space. The \emph{diagonal C*-algebra associated with} $\lspace$, denoted by $\Delta\lspace$, is the C*-subalgebra of $C^*\lspace$ generated by the elements $s_\alpha p_A s_\alpha^*$, that is,
	\[\Delta\lspace = C^*(\{s_\alpha p_A s_\alpha^* \ | \ \alpha\in\awstar \ \mbox{and} \ A\in\acfrg{\alpha}\}).\]
\end{definition}

By item (ix) of Proposition \ref{prop:properties.of.c*-labelled.space}, $\Delta\lspace$ is an abelian C*-algebra generated by commuting projections and
\[\Delta\lspace = \overline{\mathrm{span}}\{s_\alpha p_A s_\alpha^* \ | \ \alpha\in\awstar \ \mbox{and} \ A\in\acfrg{\alpha}\}.\]

\section{Filter surgery in $E(S)$}
\label{section:filter.surgery}

In this section, we define functions that are going to be used later on to construct a representation of $C^*\lspace$. These functions generalize two operations that can easily be done with paths on a graph $\dgraph$: gluing paths, that is, given $\mu$ and $\nu$ paths on $\dgraph$ such that $r(\mu)=s(\nu)$, it is easy to see that $\mu\nu$ is a new path on $\dgraph$; and cutting paths, that is, given a path $\mu\nu$ on $\dgraph$ then $\nu$ is also a path on the graph.

In the context of labelled spaces, we have an extra layer of complexity because filters in $E(S)$ are described not only by a labelled path but also by a complete family of filters associated with it, as in Theorem \ref{thm:filters.in.E(S)}. When we cut or glue labelled paths, the Boolean algebras where the filters lie change because they depend on the labelled path. We also note that, since we are only interested in tight filters in $E(S)$, the families considered below will consist only of ultrafilters.

Let us begin with the problem of describing new filters by gluing labelled paths. Consider composable labelled paths $\alpha\in\awplus$ and $\beta\in\awstar$, and an ultrafilter $\ft\in X_\beta$; a simple way to produce a subset $\ftg{J}$ of $\acfrg{\alpha\beta}$ from $\ft$ is by cutting the elements of $\ft$ by $r(\alpha\beta)$, that is, \[ \ftg{J}=\{C\cap r(\alpha\beta)\ |\ C\in\ft\}. \]

It may be the case, however, that $C\cap r(\alpha\beta)=\emptyset$ for some $C\in\ft$. Since $r(\alpha\beta)\in\acfrg{\beta}$, by Proposition \ref{prop:ultrafilter.intersection} and using the fact that $\ft$ is an ultrafilter it can be seen that the intersections $C\cap r(\alpha\beta)$ are non-empty for all $C\in\ft$ if and only if $r(\alpha\beta)\in\ft$. The following simple result is useful for the present argument, and a proof is included for convenience.

\begin{lemma}
	\label{lemma:semilattice.ultrafilter.cutdown}
	Given $E$ a meet semilattice with $0$, $y\in E$ and $\ft$ an ultrafilter in $E$, consider \[ \ftg{J}=\{x\wedge y\ |\ x\in\ft\}. \] Then $\ftg{J}$ is an ultrafilter in $\lset{y}$ if and only if $y\in\ft$.
\end{lemma}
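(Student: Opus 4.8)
The plan is to prove both implications of the biconditional directly, using Proposition~\ref{prop:ultrafilter.intersection}.

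First I would check the easy direction: assume $y\in\ft$. Then for every $x\in\ft$ we have $x\wedge y\in\ft$ (since $\ft$ is a filter in a meet semilattice), so in particular $x\wedge y\neq 0$; thus $\ftg{J}\subseteq\lset{y}\setminus\{0\}$. It is routine to verify that $\ftg{J}$ is a filter in $\lset{y}$: it is upward-closed \emph{within} $\lset{y}$ because if $x\wedge y\leq z\leq y$ with $x\in\ft$, then $z\in\ft$ (as $x\wedge y\in\ft$ and $\ft$ is upward-closed in $E$) and $z=z\wedge y$, so $z\in\ftg{J}$; and it is closed under meets since $(x_1\wedge y)\wedge(x_2\wedge y)=(x_1\wedge x_2)\wedge y\in\ftg{J}$. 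To see it is an ultrafilter in $\lset{y}$, I would apply Proposition~\ref{prop:ultrafilter.intersection} to the semilattice $\lset{y}$: suppose $z\in\lset{y}$ satisfies $z\wedge w\neq 0$ for all $w\in\ftg{J}$. In particular, for every $x\in\ft$, taking $w=x\wedge y\in\ftg{J}$ gives $z\wedge x=z\wedge(x\wedge y)\neq 0$ (using $z\leq y$). So $z$ meets every element of $\ft$ non-trivially in $E$, hence $z\in\ft$ by Proposition~\ref{prop:ultrafilter.intersection} applied to $E$; and since $z\leq y$ we get $z=z\wedge y\in\ftg{J}$. Thus $\ftg{J}$ is an ultrafilter in $\lset{y}$.

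For the converse, I would argue by contraposition: assume $y\notin\ft$ and show $\ftg{J}$ is not an ultrafilter in $\lset{y}$ — in fact, not even a filter, because it will contain $0$. Since $\ft$ is an ultrafilter and $y\notin\ft$, Proposition~\ref{prop:ultrafilter.intersection} (the contrapositive of the stated inclusion) yields some $x\in\ft$ with $x\wedge y=0$. Hence $0=x\wedge y\in\ftg{J}$, so $\ftg{J}$ violates condition~(i) in the definition of a filter, and a fortiori is not an ultrafilter in $\lset{y}$.

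There is no real obstacle here; the statement is elementary. The only point that needs a little care is making sure the ultrafilter test (Proposition~\ref{prop:ultrafilter.intersection}) is applied in the correct semilattice in the first direction — namely in $\lset{y}$, whose meet operation is inherited from $E$ and whose zero is the same $0$ — and using the hypothesis $z\leq y$ to transfer the non-vanishing of meets between $\lset{y}$ and $E$. I expect the write-up to be short.
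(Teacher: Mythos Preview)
Your proposal is correct and follows essentially the same approach as the paper: both directions hinge on Proposition~\ref{prop:ultrafilter.intersection}, with the converse handled by showing $0\in\ftg{J}$ when $y\notin\ft$, and the forward direction by verifying the filter axioms in $\lset{y}$ and then applying the ultrafilter criterion there exactly as you outline. The paper's write-up is slightly terser but the logical structure is identical.
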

\begin{proof}
	Clearly $\ftg{J}\subseteq\lset{y}$ and $\ftg{J}$ is closed by finite meets. Proposition \ref{prop:ultrafilter.intersection} ensures $y\in\ft$ if and only if $x\wedge y\neq 0$ for all $x\in\ft$, and so if $y\notin\ft$ one sees already that $\ftg{J}$ is not a filter.
	Suppose then that $y\in\ft$. Given $x\in\ft$ and $z\in\lset{y}$ such that $x\wedge y\leq z$, since $\ft$ is a filter it follows that $x\wedge y\in\ft$ and thus $z\in\ft$, whence $z=z\wedge y\in\ftg{J}$, showing $\ftg{J}$ is a filter.
	
	To show $\ftg{J}$ is an ultrafilter, let $z\in\lset{y}$ be such that $z\wedge u\neq 0$ for all $u\in\ftg{J}$. Given that $z=y\wedge z$ one has \[ x\wedge z= x\wedge(y\wedge z)=(x\wedge y)\wedge z\neq 0 \] for all $x\in\ft$, hence $z\in\ft$ since $\ft$ is an ultrafilter; but then $z=z\wedge y\in\ftg{J}$, and the result follows.
\end{proof}

\begin{lemma}
	\label{lemma:glue.alpha.ok}
	Let $\alpha\in\awplus$ and $\beta\in\awstar$ be such that $\alpha\beta\in\awplus$, let $\ft\in X_\beta$, and consider \[ \ftg{J}=\{C\cap r(\alpha\beta)\ |\ C\in\ft\}. \] Then $\ftg{J}\in X_{\alpha\beta}$ if and only if $r(\alpha\beta)\in\ft$.
\end{lemma}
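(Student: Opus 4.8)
The plan is to reduce Lemma~\ref{lemma:glue.alpha.ok} to the already-proven Lemma~\ref{lemma:semilattice.ultrafilter.cutdown} by identifying the right semilattice and the right element $y$. The natural choice is $E=\acfrg{\beta}$, which is a meet semilattice with $0=\emptyset$ (since $\acf$ is closed under finite intersections and $\emptyset\in\acf$), $\ft\in X_\beta$ is by definition an ultrafilter in this semilattice, and $y=r(\alpha\beta)$. The key observation making this work is that $r(\alpha\beta)\in\acfrg{\beta}$: indeed $r(\alpha\beta)=r(r(\alpha),\beta)$ and the accommodating family is closed under relative ranges, while clearly $r(\alpha\beta)\subseteq r(\beta)$, so $r(\alpha\beta)\in\acf\cap\powerset{r(\beta)}=\acfrg{\beta}$. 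With this in place, the meet $C\wedge r(\alpha\beta)$ in $\acfrg{\beta}$ is just $C\cap r(\alpha\beta)$, so $\ftg{J}$ is exactly the set $\{x\wedge y\mid x\in\ft\}$ from Lemma~\ref{lemma:semilattice.ultrafilter.cutdown}.

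First I would record that $\acfrg{\beta}$ is a meet semilattice with zero $\emptyset$, and that $\ft$ is an ultrafilter in it by definition of $X_\beta$. Next I would verify $r(\alpha\beta)\in\acfrg{\beta}$ as above. Then Lemma~\ref{lemma:semilattice.ultrafilter.cutdown}, applied with $E=\acfrg{\beta}$, $y=r(\alpha\beta)$, tells us that $\ftg{J}=\{C\cap r(\alpha\beta)\mid C\in\ft\}$ is an ultrafilter in $\lsetr{r(\alpha\beta)}{\acfrg{\beta}}$ if and only if $r(\alpha\beta)\in\ft$. The final step is to identify $\lsetr{r(\alpha\beta)}{\acfrg{\beta}}$ with $\acfrg{\alpha\beta}$, so that ``ultrafilter in $\lsetr{r(\alpha\beta)}{\acfrg{\beta}}$'' becomes ``element of $X_{\alpha\beta}$''. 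This identification holds because a set $D$ satisfies $D\in\acfrg{\beta}$ and $D\subseteq r(\alpha\beta)$ precisely when $D\in\acf$ and $D\subseteq r(\alpha\beta)$ (the condition $D\subseteq r(\beta)$ is then automatic since $r(\alpha\beta)\subseteq r(\beta)$), which is exactly the defining condition for $D\in\acfrg{\alpha\beta}=\acf\cap\powerset{r(\alpha\beta)}$. Hence the two posets coincide as sets, and their orders agree (both inherited from inclusion), so their ultrafilters coincide.

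I do not expect any serious obstacle here; the lemma is essentially a bookkeeping corollary of Lemma~\ref{lemma:semilattice.ultrafilter.cutdown}. The one point requiring a moment's care is the membership $r(\alpha\beta)\in\acfrg{\beta}$, which is what allows us to view the ``cut-down'' as taking meets inside the semilattice $\acfrg{\beta}$ rather than in some larger ambient lattice; this is where the hypothesis $\alpha\beta\in\awplus$ (ensuring $r(\alpha\beta)$ is a legitimate range, hence in $\acf$) and the closure of $\acf$ under relative ranges both get used. Everything else is a direct translation of definitions.
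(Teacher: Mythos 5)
Your proposal is correct and is exactly the paper's argument: apply Lemma \ref{lemma:semilattice.ultrafilter.cutdown} with $E=\acfrg{\beta}$ and $y=r(\alpha\beta)$, noting that $\lsetr{r(\alpha\beta)}{\acfrg{\beta}}=\acfrg{\alpha\beta}$. The extra bookkeeping you supply (why $r(\alpha\beta)\in\acfrg{\beta}$ and why the two posets coincide) is a correct elaboration of the same reduction.
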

\begin{proof}
	Follows immediately from Lemma \ref{lemma:semilattice.ultrafilter.cutdown}, with $E=\acfrg{\beta}$ and $y=r(\alpha\beta)$ (note that $\lset{r(\alpha\beta)}=\acfrg{\alpha\beta}$).
\end{proof}

For labelled paths $\alpha\in\awplus$ and $\beta\in\awstar$ such that $\alpha\beta\in\awplus$, denote by $X_{(\alpha)\beta}$ the set of ultrafilters in $\acfrg{\beta}$ that give rise, via cutdown by $r(\alpha\beta)$, to ultrafilters in $\acfrg{\alpha\beta}$. More precisely, using Lemma \ref{lemma:glue.alpha.ok}, \[X_{(\alpha)\beta}=\{\ft\in X_\beta \ |\ r(\alpha\beta)\in\ft\}.\]

There is thus a map $\newf{g_{(\alpha)\beta}}{X_{(\alpha)\beta}}{X_{\alpha\beta}}$, that associates to each ultrafilter $\ft\in X_{(\alpha)\beta}$, the ultrafilter in $\acfrg{\alpha\beta}$ given by \[g_{(\alpha)\beta}(\ft)=\{C\cap r(\alpha\beta)\ |\ C\in\ft\}. \]
Also, for $\alpha=\eword$ define $X_{(\eword)\beta}=X_\beta$ and let $g_{(\eword)\beta}$ denote the identity function on $X_\beta$.

The following lemmas describe properties of these sets and maps, and how they behave with respect to the maps $\newf{f_{\alpha[\beta]}}{X_{\alpha\beta}}{X_\alpha}$ between ultrafilter spaces, introduced at the end of Section \ref{section:preliminaries}.

\begin{lemma}
	\label{lemma:f-class.preserves.Xalphabetas}
	Let $\alpha\in\awplus$ and $\beta,\gamma\in\awstar$ with $\alpha\beta\gamma\in\awplus$ be given. Then
	\begin{enumerate}[(i)]
		\item\label{item:f-class.preserves.parentheses} $f_{\beta[\gamma]}(X_{(\alpha)\beta\gamma})\subseteq X_{(\alpha)\beta}$;
		\item $f_{\beta[\gamma]}^{-1}(X_{(\alpha)\beta})\subseteq X_{(\alpha)\beta\gamma}$.
	\end{enumerate}
\end{lemma}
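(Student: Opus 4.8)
The plan is to reduce both inclusions to a single membership equivalence obtained by unwinding the definitions of $f_{\beta[\gamma]}$ and of the sets $X_{(\alpha)\,\cdot\,}$, together with the composition law $r(r(A,\mu),\nu)=r(A,\mu\nu)$ for relative ranges recalled in Section~\ref{section:preliminaries}. First I would check that $r(\alpha\beta)$ really is an element of $\acfrg{\beta}$: it lies in $\acf$ because $\alpha\beta\in\awplus$ and $\acf$ contains the range of every word in $\awplus$, and it is contained in $r(\beta)$ since every representative of $\alpha\beta$ factors as a representative of $\alpha$ followed by one of $\beta$. Consequently, for any $\ft$ in the domain $X_{\beta\gamma}$ of $f_{\beta[\gamma]}$ it makes sense to ask whether $r(\alpha\beta)\in f_{\beta[\gamma]}(\ft)$, and from the defining formula $f_{\beta[\gamma]}(\ft)=\{A\in\acfrg{\beta}\ |\ r(A,\gamma)\in\ft\}$ together with the composition law applied with $A=\dgraph^0$, $\mu=\alpha\beta$, $\nu=\gamma$ I would extract the equivalence
\[
	r(\alpha\beta)\in f_{\beta[\gamma]}(\ft)\iff r(r(\alpha\beta),\gamma)\in\ft\iff r(\alpha\beta\gamma)\in\ft .
\]
This equivalence carries essentially all the content of the lemma.

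From it the two inclusions follow directly. For (i), if $\ft\in X_{(\alpha)\beta\gamma}$ --- that is, $\ft\in X_{\beta\gamma}$ with $r(\alpha\beta\gamma)\in\ft$ --- then the equivalence gives $r(\alpha\beta)\in f_{\beta[\gamma]}(\ft)$; since $f_{\beta[\gamma]}$ maps $X_{\beta\gamma}$ into $X_\beta$, the set $f_{\beta[\gamma]}(\ft)$ is an ultrafilter in $\acfrg{\beta}$ containing $r(\alpha\beta)$, which is precisely the statement $f_{\beta[\gamma]}(\ft)\in X_{(\alpha)\beta}$. For (ii), if $\ft\in f_{\beta[\gamma]}^{-1}(X_{(\alpha)\beta})$ then $\ft\in X_{\beta\gamma}$ and $f_{\beta[\gamma]}(\ft)\in X_{(\alpha)\beta}$, i.e.\ $r(\alpha\beta)\in f_{\beta[\gamma]}(\ft)$; the equivalence then yields $r(\alpha\beta\gamma)\in\ft$, i.e.\ $\ft\in X_{(\alpha)\beta\gamma}$.

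I do not expect a real obstacle; the only points needing a word of care are the degenerate cases allowed by the hypotheses $\beta,\gamma\in\awstar$. If $\gamma=\eword$, then $f_{\beta[\gamma]}$ is the identity map of $X_\beta$ and one has $X_{(\alpha)\beta\gamma}=X_{(\alpha)\beta}$, so both inclusions become trivial equalities; the displayed equivalence remains valid verbatim since $r(A,\eword)=A$. If $\beta=\eword$, then $f_{\beta[\gamma]}$ takes the form $\newf{f_{\eword[\gamma]}}{X_\gamma}{X_\eword\cup\{\emptyset\}}$, whose value could a priori be $\emptyset$ rather than an ultrafilter; but in case (i) the set $f_{\eword[\gamma]}(\ft)$ contains $r(\alpha)=r(\alpha\beta)$, which is non-empty because $\alpha\in\awplus$, so $f_{\eword[\gamma]}(\ft)$ is an honest ultrafilter in $\acf=\acfrg{\eword}$ and the argument is unaffected, while in case (ii) the hypothesis $f_{\eword[\gamma]}(\ft)\in X_{(\alpha)\eword}\subseteq X_\eword$ already excludes the value $\emptyset$. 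Thus the only genuine work is the bookkeeping in the first paragraph --- verifying $r(\alpha\beta)\in\acfrg{\beta}$ and that the relative-range composition law applies in the form used.
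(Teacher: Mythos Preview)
Your proposal is correct and follows essentially the same approach as the paper: both arguments hinge on the observation that $r(\alpha\beta)\in\acfrg{\beta}$ and on the identity $r(r(\alpha\beta),\gamma)=r(\alpha\beta\gamma)$, which together with the definition of $f_{\beta[\gamma]}$ give exactly the membership equivalence you isolate. The paper treats the two inclusions separately rather than packaging them as a single biconditional, and it does not spell out the degenerate cases $\beta=\eword$, $\gamma=\eword$ that you handle explicitly, but the substance is the same.
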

\begin{proof}
	To prove (i), given $\ft'\in X_{(\alpha)\beta\gamma}$, one has $r(\alpha\beta\gamma)\in\ft'$, and also \[ f_{\beta[\gamma]}(\ft')=\{C\in\acfrg{\beta}\ |\ r(C,\gamma)\in\ft' \}. \] 
	Since $r(\alpha\beta)\in\acfrg{\beta}$ and $r(r(\alpha\beta),\gamma)=r(\alpha\beta\gamma)\in\ft'$, it follows that $r(\alpha\beta)\in f_{\beta[\gamma]}(\ft')$ and therefore $f_{\beta[\gamma]}(\ft')\in X_{(\alpha)\beta}$.
	
	As for (ii), if $\ft'\in f_{\beta[\gamma]}^{-1}(X_{(\alpha)\beta})$ then $f_{\beta[\gamma]}(\ft')\in X_{(\alpha)\beta}$, and thus $r(\alpha\beta)\in\{C\in\acfrg{\beta}\ |\ r(C,\gamma)\in\ft' \}$. This gives \[ r(\alpha\beta\gamma)=r(r(\alpha\beta),\gamma)\in\ft', \] whence $\ft'\in X_{(\alpha)\beta\gamma}$.
\end{proof}

\begin{lemma}
	\label{lemma:g-class.properties} Let $\alpha\in\awplus$ and $\beta,\gamma\in\awstar$ with $\alpha\beta\gamma\in\awplus$ be given.
	Then,
	\begin{enumerate}[(i)]
		\item\label{item:g-class.domain.containment} $X_{(\alpha\beta)\gamma}\subseteq X_{(\beta)\gamma}$;
		
		\item\label{item:g-class.composition.defined} $g_{(\beta)\gamma}(X_{(\alpha\beta)\gamma})\subseteq X_{(\alpha)\beta\gamma}$;
		
		\item\label{item:g-class.composition.rule}
		If $\ft\in X_{(\alpha\beta)\gamma}$, then \[ g_{(\alpha\beta)\gamma}(\ft)=g_{(\alpha)\beta\gamma}\circ g_{(\beta)\gamma}(\ft). \]
		
		\item\label{item:f-class.and.g-class.compatible}
		Suppose the labelled space $\lspace$ is weakly left-resolving. Then, the following diagram is commutative:
		\[\xymatrix{
			X_{(\alpha)\beta\gamma} \ar[r]^{g_{(\alpha)\beta\gamma}} \ar[d]_{f_{\beta[\gamma]}} & X_{\alpha\beta\gamma} \ar[d]^{f_{\alpha\beta[\gamma]}} \\
			X_{(\alpha)\beta} \ar[r]_{g_{(\alpha)\beta}}       & X_{\alpha\beta}. }
		\]
	\end{enumerate}
\end{lemma}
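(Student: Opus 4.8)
The four statements are proved in the order given, with each feeding into the next. For \eqref{item:g-class.domain.containment}, I start from $\ft\in X_{(\alpha\beta)\gamma}$, which by definition means $\ft\in X_\gamma$ with $r(\alpha\beta\gamma)\in\ft$. Since $r(\alpha\beta\gamma)\subseteq r(\beta\gamma)$ (the relative range of a word on all of $\dgraph^0$ contains any relative range obtained by first restricting the source set), upward closure of the filter $\ft$ gives $r(\beta\gamma)\in\ft$, i.e.\ $\ft\in X_{(\beta)\gamma}$. For \eqref{item:g-class.composition.defined}, take $\ft\in X_{(\alpha\beta)\gamma}$; by (i) it lies in $X_{(\beta)\gamma}$, so $g_{(\beta)\gamma}(\ft)=\{C\cap r(\beta\gamma)\mid C\in\ft\}$ is a well-defined ultrafilter in $\acfrg{\beta\gamma}$ by Lemma \ref{lemma:glue.alpha.ok}. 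To see it lies in $X_{(\alpha)\beta\gamma}$ I must check $r(\alpha\beta\gamma)\in g_{(\beta)\gamma}(\ft)$; since $r(\alpha\beta\gamma)\subseteq r(\beta\gamma)$ we have $r(\alpha\beta\gamma)=r(\alpha\beta\gamma)\cap r(\beta\gamma)$, and because $r(\alpha\beta\gamma)\in\ft$ this exhibits $r(\alpha\beta\gamma)$ as an element $C\cap r(\beta\gamma)$ with $C=r(\alpha\beta\gamma)\in\ft$, as required.

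For the composition rule \eqref{item:g-class.composition.rule}, both sides are ultrafilters in $\acfrg{\alpha\beta\gamma}$ by the previous parts, so it suffices to show one contains the other; ultrafilters being maximal, set equality then follows (alternatively, just compute both directly). Unwinding the right-hand side, $g_{(\alpha)\beta\gamma}\bigl(g_{(\beta)\gamma}(\ft)\bigr)=\{D\cap r(\alpha\beta\gamma)\mid D\in g_{(\beta)\gamma}(\ft)\}=\{(C\cap r(\beta\gamma))\cap r(\alpha\beta\gamma)\mid C\in\ft\}$, and since $r(\alpha\beta\gamma)\subseteq r(\beta\gamma)$ this collapses to $\{C\cap r(\alpha\beta\gamma)\mid C\in\ft\}=g_{(\alpha\beta)\gamma}(\ft)$. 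This part is essentially bookkeeping once the containment $r(\alpha\beta\gamma)\subseteq r(\beta\gamma)$ is in hand.

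The genuine work is in \eqref{item:f-class.and.g-class.compatible}, where the weakly-left-resolving hypothesis finally enters. I would chase an arbitrary $\ft'\in X_{(\alpha)\beta\gamma}$ around the square. Going right then down: $f_{\alpha\beta[\gamma]}\bigl(g_{(\alpha)\beta\gamma}(\ft')\bigr)=\{A\in\acfrg{\alpha\beta}\mid r(A,\gamma)\in g_{(\alpha)\beta\gamma}(\ft')\}=\{A\in\acfrg{\alpha\beta}\mid r(A,\gamma)=B\cap r(\alpha\beta\gamma)\text{ for some }B\in\ft'\}$. Going down then right: $g_{(\alpha)\beta}\bigl(f_{\beta[\gamma]}(\ft')\bigr)=\{C\cap r(\alpha\beta)\mid C\in\acfrg{\beta},\ r(C,\gamma)\in\ft'\}$. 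To match these I would take $C\in\acfrg\beta$ with $r(C,\gamma)\in\ft'$ and set $A=C\cap r(\alpha\beta)\in\acfrg{\alpha\beta}$; then I need $r(A,\gamma)\in g_{(\alpha)\beta\gamma}(\ft')$, and here $r(C\cap r(\alpha\beta),\gamma)=r(C,\gamma)\cap r(r(\alpha\beta),\gamma)=r(C,\gamma)\cap r(\alpha\beta\gamma)$ \emph{uses the weakly-left-resolving property} (distributivity of relative range over intersection) — this identity is precisely what fails in general and is the crux of the argument. Since $r(C,\gamma)\in\ft'$, the right side is an element of $g_{(\alpha)\beta\gamma}(\ft')$, giving one inclusion. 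Both sides being ultrafilters in $\acfrg{\alpha\beta}$, maximality upgrades the inclusion to equality and the diagram commutes. The main obstacle, then, is not the diagram chase itself but recognizing where weak left-resolvingness is needed and applying it in the correct form $r(D\cap D',\gamma)=r(D,\gamma)\cap r(D',\gamma)$; everything else reduces to the monotonicity $r(\alpha\beta\gamma)\subseteq r(\beta\gamma)$ and upward closure of filters.
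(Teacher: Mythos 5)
Your proposal is correct and follows essentially the same route as the paper: parts (i)--(iii) via the containment $r(\alpha\beta\gamma)\subseteq r(\beta\gamma)$ and direct computation, and part (iv) by showing one inclusion using the weakly left-resolving identity $r(C\cap r(\alpha\beta),\gamma)=r(C,\gamma)\cap r(\alpha\beta\gamma)$ and then invoking maximality of ultrafilters. The only detail the paper makes explicit that you leave implicit is the appeal to Lemma \ref{lemma:f-class.preserves.Xalphabetas}.(\ref{item:f-class.preserves.parentheses}) to know that $f_{\beta[\gamma]}$ indeed lands in $X_{(\alpha)\beta}$, so that the left-bottom composite is defined and is an ultrafilter.
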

\begin{proof}
	If $\ft\in X_{(\alpha\beta)\gamma}$ then $r(\alpha\beta\gamma)\in \ft$ and, since $\ft$ is a filter in $\acfrg{\gamma}$, the fact that $r(\alpha\beta\gamma)\subseteq r(\beta\gamma)$ gives $r(\beta\gamma)\in\ft$, whence $\ft\in X_{(\beta)\gamma}$; this proves (i). Also, \[ r(\alpha\beta\gamma)=r(\alpha\beta\gamma)\cap r(\beta\gamma)\in \{C\cap r(\beta\gamma)\ |\ C\in\ft\}=g_{(\beta)\gamma}(\ft), \] from where (ii) follows. 
	
	It is clear that \dmal{g_{(\alpha)\beta\gamma}\circ g_{(\beta)\gamma}(\ft)&=g_{(\alpha)\beta\gamma}(\{C\cap r(\beta\gamma)\ |\ C\in\ft\}) \\
		&=\{(C\cap r(\beta\gamma))\cap r(\alpha\beta\gamma)\ | \ C\in\ft\} \\
		&=\{C\cap r(\alpha\beta\gamma)\ | \ C\in\ft\}=g_{(\alpha\beta)\gamma}(\ft),} which is (iii). As for (iv), Lemma \ref{lemma:f-class.preserves.Xalphabetas} ensures $f_{\beta[\gamma]}$ maps $X_{(\alpha)\beta\gamma}$ into $X_{(\alpha)\beta}$. If $\ft\in X_{(\alpha)\beta\gamma}$, then \[ (g_{(\alpha)\beta}\circ f_{\beta[\gamma]})(\ft)=\{C\cap r(\alpha\beta)\ |\ C\in\acfrg{\beta}\ \text{and}\ r(C,\gamma)\in\ft \} \] and \[ (f_{\alpha\beta[\gamma]}\circ g_{(\alpha)\beta\gamma})(\ft)=\{ D\in\acfrg{\alpha\beta}\ |\ r(D,\gamma)\in g_{(\alpha)\beta\gamma}(\ft)\}.  \]
	Given $C\in\acfrg{\beta}$ such that $r(C,\gamma)\in\ft$, note that $C\cap r(\alpha\beta)\in\acfrg{\alpha\beta}$ and that \[ r(C\cap r(\alpha\beta),\gamma)=r(C,\gamma)\cap r(\alpha\beta\gamma)\in g_{(\alpha)\beta\gamma}(\ft), \] using the weakly left-resolving hypothesis; therefore \[ (g_{(\alpha)\beta}\circ f_{\beta[\gamma]})(\ft)\subseteq (f_{\alpha\beta[\gamma]}\circ g_{(\alpha)\beta\gamma})(\ft), \] and since both terms are ultrafilters, equality follows.
\end{proof}

\begin{lemma}
	\label{lemma:g-class.properties2}
	Let $\alpha\in\awplus$ and $\beta\in\awstar$ be such that $\alpha\beta\in\awplus$. Then,
	\begin{enumerate}[(i)]
		\item\label{item:g-class.preserves.sinks}
		$g_{(\alpha)\beta}(X_{(\alpha)\beta}\cap X_{\beta}^{sink})\subseteq X_{\alpha\beta}^{sink}.$
		
		\item\label{item:g-class.continuous}
		$\newf{g_{(\alpha)\beta}}{X_{(\alpha)\beta}}{X_{\alpha\beta}}$ is continuous.
		
		\item\label{item:Xalphabeta.open}
		$X_{(\alpha)\beta}$ is an open subset of $X_{\beta}$.
	\end{enumerate}
	
\end{lemma}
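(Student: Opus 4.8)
The plan is to reduce all three parts to one elementary fact about the explicit formula $g_{(\alpha)\beta}(\ft)=\{C\cap r(\alpha\beta)\mid C\in\ft\}$. First I would record two preliminary observations. One is that $r(\alpha\beta)\in\acfrg{\beta}$: it lies in $\acf$ because $\alpha\beta\in\awplus$, and $r(\alpha\beta)=r(r(\alpha),\beta)\subseteq r(\beta)$ since $A\mapsto r(A,\beta)$ is monotone, which is immediate from $r(A\cup B,\beta)=r(A,\beta)\cup r(B,\beta)$. The other -- the key claim -- is that for $\ft\in X_{(\alpha)\beta}$ and $D\in\acfrg{\alpha\beta}$ (so that $D\subseteq r(\alpha\beta)\subseteq r(\beta)$, hence $D\in\acfrg{\beta}$ and ``$D\in\ft$'' is meaningful) one has $D\in g_{(\alpha)\beta}(\ft)$ if and only if $D\in\ft$. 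The reverse implication is trivial: $D=D\cap r(\alpha\beta)\in g_{(\alpha)\beta}(\ft)$ when $D\in\ft$. For the forward implication, if $D=C\cap r(\alpha\beta)$ with $C\in\ft$, then since $r(\alpha\beta)\in\ft$ (which is exactly the defining condition of $X_{(\alpha)\beta}$) and $\ft$ is closed under finite intersections, $D\in\ft$.

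Granting this, part (iii) is immediate: $X_{(\alpha)\beta}=\{\ft\in X_\beta\mid r(\alpha\beta)\in\ft\}$ is precisely the basic clopen subset of the Stone space $X_\beta$ determined by the element $r(\alpha\beta)\in\acfrg{\beta}$, hence open. For part (ii) I would fix a basic clopen set $V_D=\{\hs\in X_{\alpha\beta}\mid D\in\hs\}$ of $X_{\alpha\beta}$, where $D\in\acfrg{\alpha\beta}$; these form a basis of clopen sets because $\alpha\beta\in\awplus$ makes $\acfrg{\alpha\beta}$ a Boolean algebra. The claim then gives $g_{(\alpha)\beta}^{-1}(V_D)=\{\ft\in X_{(\alpha)\beta}\mid D\in\ft\}$, which is the trace on $X_{(\alpha)\beta}$ of the basic open set $\{\ft\in X_\beta\mid D\in\ft\}$ of $X_\beta$, hence open in the subspace $X_{(\alpha)\beta}$; continuity of $g_{(\alpha)\beta}$ follows.

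For part (i) I would start from $\ft\in X_{(\alpha)\beta}\cap X_\beta^{sink}$, noting that $g_{(\alpha)\beta}(\ft)\in X_{\alpha\beta}$ by Lemma \ref{lemma:glue.alpha.ok}. Fixing $b\in\alf$, the sink condition for $\ft$ supplies $A\in\ft$ with $r(A,b)=\emptyset$; putting $D=A\cap r(\alpha\beta)\in g_{(\alpha)\beta}(\ft)$, the inclusion $D\subseteq A$ and monotonicity of $r(\,\cdot\,,b)$ yield $r(D,b)\subseteq r(A,b)=\emptyset$. As $b$ was arbitrary, $g_{(\alpha)\beta}(\ft)\in X_{\alpha\beta}^{sink}$. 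I do not expect a genuine obstacle anywhere once the claim is isolated; the one point that needs a little care is part (ii) in the case $\beta=\eword$, where $\acfrg{\eword}=\acf$ is only a generalized Boolean algebra, so one should argue with the subbasis $\{\ft\mid A\in\ft\}$, $\{\ft\mid A\notin\ft\}$ of $X_\eword$ rather than a clopen basis -- but the preimage computation via the claim is verbatim the same.
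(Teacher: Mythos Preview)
Your proof is correct. The key claim you isolate---that for $\ft\in X_{(\alpha)\beta}$ and $D\in\acfrg{\alpha\beta}$ one has $D\in g_{(\alpha)\beta}(\ft)$ iff $D\in\ft$---is exactly the right organizing observation, and all three parts follow cleanly from it.

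Your approach differs from the paper's in parts (ii) and (iii). The paper works entirely with nets: for (ii) it takes a net $\{\ft_\lambda\}$ in $X_{(\alpha)\beta}$ converging to $\ft$ and verifies, element by element and using Proposition~\ref{prop:ultrafilter.intersection}, that $g_{(\alpha)\beta}(\ft_\lambda)$ converges to $g_{(\alpha)\beta}(\ft)$; for (iii) it again argues via a convergent net and the eventual membership of $r(\alpha\beta)$. You instead exploit the Stone topology directly: (iii) is immediate because $X_{(\alpha)\beta}$ is literally a subbasic (cl)open set, and (ii) reduces to the preimage computation $g_{(\alpha)\beta}^{-1}(V_D)=\{\ft\in X_{(\alpha)\beta}\mid D\in\ft\}$ via your claim. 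Your route is shorter and avoids the case split (``$D\in g_{(\alpha)\beta}(\ft)$'' versus ``$D\notin g_{(\alpha)\beta}(\ft)$'') and the appeal to Proposition~\ref{prop:ultrafilter.intersection} that the paper's net argument needs. Part (i) is essentially identical in both treatments. One small remark: your final caveat about $\beta=\eword$ is not really needed for (ii), since the basis you use lives in the \emph{codomain} $X_{\alpha\beta}$ (where $\acfrg{\alpha\beta}$ is a genuine Boolean algebra), and on the domain side the set $\{\ft\in X_\beta\mid D\in\ft\}$ is subbasic open in the pointwise topology regardless of whether $\acfrg{\beta}$ has a unit.
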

\begin{proof}
	Given $\ft\in X_{(\alpha)\beta}\cap X_{\beta}^{sink}$, for each $a\in\alf$ there is a $D\in\ft$ with $r(D,a)=\emptyset$; consequently, $D\cap r(\alpha\beta)\in g_{(\alpha)\beta}(\ft)$ is such that $r(D\cap r(\alpha\beta),a)=\emptyset$, hence $g_{(\alpha)\beta}(\ft)\in X_{\alpha\beta}^{sink}$, proving (i).
	
	To prove (ii), consider a net $\{\ft_\lambda\}_\lambda\subseteq X_{(\alpha)\beta}$ that converges to $\ft\in X_{(\alpha)\beta}$. For an arbitrary $D\in g_{(\alpha)\beta}(\ft)$, say $D=C\cap r(\alpha\beta)$ for some $C\in\ft$, the convergence above ensures there is $\lambda_0$ such that $\lambda\geq\lambda_0$ implies $C\in\ft_\lambda$, and therefore $D=C\cap r(\alpha\beta)\in g_{(\alpha)\beta}(\ft_\lambda)$.
	
	If on the other hand $D\in\acfrg{\alpha\beta}\backslash g_{(\alpha)\beta}(\ft)$, using that $g_{(\alpha)\beta}(\ft)$ is an ultrafilter there must be an element of it that does not intersect $D$, that is, there is $C\in\ft$ such that $D\cap(C\cap r(\alpha\beta))=\emptyset$, by Proposition \ref{prop:ultrafilter.intersection}. Since $\acfrg{\alpha\beta}\subseteq\acfrg{\beta}$ one has $D\in\acfrg{\beta}$ and from $D=D\cap r(\alpha\beta)$ it can be concluded that $D\cap C=(D\cap r(\alpha\beta))\cap C=\emptyset$, and thus $D\notin\ft$, again by Proposition \ref{prop:ultrafilter.intersection} and using that $\ft$ is an ultrafilter.
	That means that there must be an index $\lambda'$ such that $\lambda\geq\lambda'$ implies $D\notin\ft_{\lambda}$, ensuring the existence of an element $C_{\lambda}\in\ft_{\lambda}$ with $D\cap C_{\lambda}=\emptyset$ for each such $\lambda$; the element $C_{\lambda}\cap r(\alpha\beta)\in g_{(\alpha)\beta}(\ft_{\lambda})$ satisfies \[ D\cap(C_{\lambda}\cap r(\alpha\beta))=\emptyset, \] from where it can be established that $D\notin g_{(\alpha)\beta}(\ft_{\lambda})$ for all $\lambda\geq\lambda'$. This concludes the proof that $g_{(\alpha)\beta}(\ft_{\lambda})$ converges to $g_{(\alpha)\beta}(\ft)$, and that $g_{(\alpha)\beta}$ is therefore a continuous map.
	
	As for (iii), suppose $\ft\in X_{(\alpha)\beta}$ and $\{\ft_\lambda\}_\lambda\subseteq X_\beta$ is a net that converges to $\ft$. Since $r(\alpha\beta)\in\ft$, the pointwise convergence says there is an index $\lambda_0$ such that $\lambda\geq\lambda_0$ implies $r(\alpha\beta)\in\ft_\lambda$, and thus $\ft_\lambda\in X_{(\alpha)\beta}$, whence $X_{(\alpha)\beta}$ is open.
\end{proof}

Let us proceed with the problem of describing new filters by cutting labelled paths.

Consider composable labelled paths $\alpha\in\awplus$ and $\beta\in\awstar$, and an ultrafilter $\ft\in X_{\alpha\beta}$. Note that $\ft\subseteq\acfrg{\alpha\beta}\subseteq\acfrg{\beta}$, but $\ft$ may not be a filter in $\acfrg{\beta}$, for $\acfrg{\beta}$ may contain elements above a given element of $\ft$ that are not in $\acfrg{\alpha\beta}$. If we add to $\ft$ these elements, however, the resulting set is an ultrafilter in $\acfrg{\beta}$, as the following result shows.

\begin{proposition}
	\label{prop:h-class.definition.enabler}
	Let $\alpha\in\awplus$ and $\beta\in\awstar$ be such that $\alpha\beta\in\awplus$, and $\ft\in X_{\alpha\beta}$. Then $\usetr{\ft}{\acfrg{\beta}}\in X_{(\alpha)\beta}$.
\end{proposition}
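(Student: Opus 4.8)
The plan is to show directly that $\usetr{\ft}{\acfrg{\beta}}$ satisfies the criterion of Proposition~\ref{prop:ultrafilter.intersection} for being an ultrafilter in $\acfrg{\beta}$, and then verify that it lies in $X_{(\alpha)\beta}$, i.e.\ that $r(\alpha\beta)$ belongs to it. Write $\ftg{G}=\usetr{\ft}{\acfrg{\beta}}=\{D\in\acfrg{\beta}\ |\ C\subseteq D\ \text{for some}\ C\in\ft\}$. First I would check $\ftg{G}$ is a filter in $\acfrg{\beta}$: it is upward closed by construction and does not contain $\emptyset$ (since $C\subseteq\emptyset$ forces $C=\emptyset\notin\ft$); for downward directedness, given $D_1,D_2\in\ftg{G}$ with $C_i\subseteq D_i$, $C_i\in\ft$, the element $C_1\cap C_2\in\ft$ lies below both, and $C_1\cap C_2\in\acfrg{\alpha\beta}\subseteq\acfrg{\beta}$, so $C_1\cap C_2\in\ftg{G}$. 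Note here I use that $\ft$, being an ultrafilter in the meet semilattice $\acfrg{\alpha\beta}$, is closed under meets.

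Next I would verify the ultrafilter condition. Suppose $D\in\acfrg{\beta}$ satisfies $D\cap G\neq\emptyset$ for all $G\in\ftg{G}$; I want $D\in\ftg{G}$. Since $\ft\subseteq\ftg{G}$, in particular $D\cap C\neq\emptyset$ for all $C\in\ft$. The key point is to replace $D$ by $D\cap r(\alpha\beta)$, which lives in $\acfrg{\alpha\beta}$: because $r(\alpha\beta)\in\ft\subseteq\ftg{G}$ (this is where I must observe $r(\alpha\beta)\in\ftg{G}$ — indeed $r(\alpha\beta)\in\acfrg{\beta}$ and any $C\in\ft$ satisfies $C\subseteq r(\alpha\beta)$), the hypothesis gives $(D\cap r(\alpha\beta))\cap C\neq\emptyset$ is not immediate from $D\cap G\neq\emptyset$ directly, so instead I argue: for every $C\in\ft$, $C\subseteq r(\alpha\beta)$, hence $D\cap C=(D\cap r(\alpha\beta))\cap C$; since $D\cap C\neq\emptyset$, the element $D\cap r(\alpha\beta)\in\acfrg{\alpha\beta}$ meets every member of $\ft$. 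By Proposition~\ref{prop:ultrafilter.intersection} applied to the ultrafilter $\ft$ in $\acfrg{\alpha\beta}$, we get $D\cap r(\alpha\beta)\in\ft$. Then $D\cap r(\alpha\beta)\subseteq D$ with $D\cap r(\alpha\beta)\in\ft$ shows $D\in\ftg{G}$. This establishes that $\ftg{G}$ is an ultrafilter in $\acfrg{\beta}$.

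Finally, membership in $X_{(\alpha)\beta}$: by definition (Lemma~\ref{lemma:glue.alpha.ok}) this means $r(\alpha\beta)\in\ftg{G}$, which was already observed above since $r(\alpha\beta)\in\acfrg{\beta}$ and $C\subseteq r(\alpha\beta)$ for any $C\in\ft$ (the set $\ft$ is nonempty as an ultrafilter). Thus $\ftg{G}=\usetr{\ft}{\acfrg{\beta}}\in X_{(\alpha)\beta}$, as required.

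The main obstacle, such as it is, is the ultrafilter step: one must be careful that the natural-looking "cut down by $r(\alpha\beta)$" trick genuinely lets one transfer the meeting-everything condition from $\acfrg{\beta}$ down to $\acfrg{\alpha\beta}$, and this hinges on the containment $C\subseteq r(\alpha\beta)$ for all $C\in\ft$ together with Proposition~\ref{prop:ultrafilter.intersection}. Everything else (filter axioms, nonemptiness of $\ft$, $r(\alpha\beta)\in\acfrg{\beta}$) is bookkeeping. I expect no need for the weakly left-resolving hypothesis here.
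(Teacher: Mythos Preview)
Your argument is correct and follows essentially the same route as the paper's proof: both verify the ultrafilter criterion of Proposition~\ref{prop:ultrafilter.intersection} by cutting an arbitrary $D\in\acfrg{\beta}$ down to $D\cap r(\alpha\beta)\in\acfrg{\alpha\beta}$, using that every $C\in\ft$ satisfies $C=C\cap r(\alpha\beta)$ to transfer the intersection condition into $\acfrg{\alpha\beta}$, and then invoking that $\ft$ is an ultrafilter there. You are simply more explicit than the paper in checking the filter axioms for $\ftg{G}$ beforehand (the paper takes this as routine), and your closing remark that the weakly left-resolving hypothesis is not needed here is accurate.
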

\begin{proof}
	Suppose $C\in\acfrg{\beta}$ satisfies $C\cap B\neq\emptyset$ for all $B\in\usetr{\ft}{\acfrg{\beta}}$;  in particular, $C\cap B\neq\emptyset$ for all $B\in\ft$. Note that $r(\alpha\beta)\in\ft$ (since $\ft\in X_{\alpha\beta}$), so for any given $B\in\ft$ one has $B\cap r(\alpha\beta)=B$,  which in turn gives, for the element $(C\cap r(\alpha\beta))\in\lset{r(\alpha\beta)}=\acfrg{\alpha\beta}$, \[ (C\cap r(\alpha\beta))\cap B= C\cap(r(\alpha\beta)\cap B)=C\cap B\neq\emptyset. \] $\ft$ is an ultrafilter, so $C\cap r(\alpha\beta)\in\ft$ and thus $C\in\usetr{(C\cap r(\alpha\beta))}{\acfrg{\beta}}\subseteq\usetr{\ft}{\acfrg{\beta}}$. Proposition \ref{prop:ultrafilter.intersection} now ensures that $\usetr{\ft}{\acfrg{\beta}}\in X_\beta$, and clearly $r(\alpha\beta)\in\usetr{\ft}{\acfrg{\beta}}$, proving $\usetr{\ft}{\acfrg{\beta}}\in X_{(\alpha)\beta}$ as desired.
\end{proof}

Proposition \ref{prop:h-class.definition.enabler} gives rise to a function $\newf{h_{[\alpha]\beta}}{X_{\alpha\beta}}{X_{(\alpha)\beta}}$ that associates to each ultrafilter $\ft\in X_{\alpha\beta}$, the ultrafilter in $\acfrg{\beta}$ given by \[ h_{[\alpha]\beta}(\ft)=\usetr{\ft}{\acfrg{\beta}}. \] The following lemmas describe some of the properties of these maps.

\begin{lemma}
	\label{lemma:h.class.properties}
	Let $\alpha\in\awplus$ and $\beta,\gamma\in\awstar$ with $\alpha\beta\gamma\in\awplus$ be given. Then,
	\begin{enumerate}[(i)]
		\item\label{item:h-class.composition.rule}
		$h_{[\beta]\gamma}\circ h_{[\alpha]\beta\gamma}=h_{[\alpha\beta]\gamma}$.
		
		\item\label{item:f-class.and.h-class.compatible}
		The following diagram is commutative:
		\[\xymatrix{
			X_{\alpha\beta\gamma} \ar[r]^{h_{[\alpha]\beta\gamma}} \ar[d]_{f_{\alpha\beta[\gamma]}} & X_{(\alpha)\beta\gamma} \ar[d]^{f_{\beta[\gamma]}} \\
			X_{\alpha\beta} \ar[r]_{h_{[\alpha]\beta}}       & X_{(\alpha)\beta}. }
		\]
	\end{enumerate}
\end{lemma}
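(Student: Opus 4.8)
The plan is to prove both parts by directly unwinding the defining formulas $h_{[\mu]\nu}(\ft)=\usetr{\ft}{\acfrg{\nu}}$ (which makes sense by Proposition~\ref{prop:h-class.definition.enabler} when $\mu\in\awplus$, and which I read as the identity of $X_\nu$ when $\mu=\eword$, consistently with $\usetr{\ft}{\acfrg{\nu}}=\ft$ for $\ft\in X_\nu$) and $f_{\mu[\nu]}(\ft)=\{C\in\acfrg{\mu}\ |\ r(C,\nu)\in\ft\}$, keeping in mind two bookkeeping facts: if $\ft\in X_{\mu\nu}$ then $\ft\subseteq\acfrg{\mu\nu}$, and if $C\in\acfrg{\mu}$ then $r(C,\nu)\in\acfrg{\mu\nu}$. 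First I would note that the composites in the statement are well defined: for (i) by repeated use of Proposition~\ref{prop:h-class.definition.enabler} together with the inclusion $X_{(\alpha)\beta\gamma}\subseteq X_{\beta\gamma}$, and for (ii) by the inclusion $f_{\beta[\gamma]}(X_{(\alpha)\beta\gamma})\subseteq X_{(\alpha)\beta}$ from Lemma~\ref{lemma:f-class.preserves.Xalphabetas}(i).

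For (i), fix $\ft\in X_{\alpha\beta\gamma}$, so that $\ft\subseteq\acfrg{\alpha\beta\gamma}\subseteq\acfrg{\beta\gamma}\subseteq\acfrg{\gamma}$. Since $h_{[\alpha]\beta\gamma}(\ft)=\usetr{\ft}{\acfrg{\beta\gamma}}$, the claim reduces to showing that applying $\usetr{(\,\cdot\,)}{\acfrg{\gamma}}$ to $\usetr{\ft}{\acfrg{\beta\gamma}}$ reproduces $\usetr{\ft}{\acfrg{\gamma}}$: one inclusion is transitivity of set inclusion, and the other holds because each $B\in\ft$ already lies in $\acfrg{\beta\gamma}$, hence in $\usetr{\ft}{\acfrg{\beta\gamma}}$, so every element of $\acfrg{\gamma}$ containing such a $B$ lies in the iterated up-set. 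Since $\usetr{\ft}{\acfrg{\gamma}}=h_{[\alpha\beta]\gamma}(\ft)$, this is exactly $h_{[\beta]\gamma}(h_{[\alpha]\beta\gamma}(\ft))=h_{[\alpha\beta]\gamma}(\ft)$; nothing beyond the chain of Boolean algebras is used, and the cases $\beta=\eword$ or $\gamma=\eword$ are covered by the same computation with $\acfrg{\eword}=\acf$.

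For (ii), fix $\ft\in X_{\alpha\beta\gamma}$ and write out both composites as subsets of $\acfrg{\beta}$. Unwinding the formulas (and using $r(C,\gamma)\in\acfrg{\beta\gamma}$ for $C\in\acfrg{\beta}$) yields
\[\bigl(f_{\beta[\gamma]}\circ h_{[\alpha]\beta\gamma}\bigr)(\ft)=\{C\in\acfrg{\beta}\ |\ \exists\,B\in\ft,\ B\subseteq r(C,\gamma)\}\]
and
\[\bigl(h_{[\alpha]\beta}\circ f_{\alpha\beta[\gamma]}\bigr)(\ft)=\{C\in\acfrg{\beta}\ |\ \exists\,D\in\acfrg{\alpha\beta},\ D\subseteq C\ \mbox{and}\ r(D,\gamma)\in\ft\}.\]
The inclusion of the second set in the first is immediate: from $D\subseteq C$ one gets $r(D,\gamma)\subseteq r(C,\gamma)$, so $B:=r(D,\gamma)$ works. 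For the reverse inclusion, given $C\in\acfrg{\beta}$ and $B\in\ft$ with $B\subseteq r(C,\gamma)$, I would take the essentially forced candidate $D:=C\cap r(\alpha\beta)\in\acfrg{\alpha\beta}$, which sits below $C$; by the weakly left-resolving property $r(D,\gamma)=r(C,\gamma)\cap r(\alpha\beta\gamma)$, and since $B\in\ft\subseteq\acfrg{\alpha\beta\gamma}$ forces $B\subseteq r(\alpha\beta\gamma)$ while $B\subseteq r(C,\gamma)$ by hypothesis, we obtain $B\subseteq r(D,\gamma)$; as $r(D,\gamma)\in\acfrg{\alpha\beta\gamma}$ and $\ft$ is a filter there, $r(D,\gamma)\in\ft$, so $D$ witnesses $C$ in the second set. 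Words of length zero are again covered by the same computation, with the usual conventions for the degenerate maps.

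I expect the only step with any real content to be the reverse inclusion in (ii): one must manufacture an element of $\acfrg{\alpha\beta}$ below $C$ whose $\gamma$-range is still large enough to lie in $\ft$. The candidate $C\cap r(\alpha\beta)$ is essentially the only reasonable one, but it works precisely because weak left-resolving converts the cutdown by $r(\alpha\beta)$ into a cutdown of $r(C,\gamma)$ by $r(\alpha\beta\gamma)$; without that hypothesis $r(C\cap r(\alpha\beta),\gamma)$ can be strictly --- indeed arbitrarily --- smaller than $r(C,\gamma)\cap r(\alpha\beta\gamma)$ and the argument fails. An alternative, cleaner route to (ii) would be to first observe that $g_{(\alpha)\beta}\circ h_{[\alpha]\beta}=\mathrm{id}_{X_{\alpha\beta}}$ and $h_{[\alpha]\beta}\circ g_{(\alpha)\beta}=\mathrm{id}_{X_{(\alpha)\beta}}$ (each a one-line check using that filters are closed under finite meets and that $r(\alpha\beta)$ belongs to every member of $X_{(\alpha)\beta}$), so that $h_{[\alpha]\beta}$ and $g_{(\alpha)\beta}$ are mutually inverse; the square in (ii) then follows from the commuting square of Lemma~\ref{lemma:g-class.properties}(iv) by precomposing on the right with $h_{[\alpha]\beta\gamma}$, on the left with $h_{[\alpha]\beta}$, and cancelling the resulting $g\circ h$'s.
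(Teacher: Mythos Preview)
Your proof is correct and follows the same basic strategy as the paper---unwinding the definitions of $h$ and $f$---but with one notable difference in execution. In both parts the paper shows only \emph{one} inclusion and then invokes the fact that both sides are ultrafilters in the same Boolean algebra (in $\acfrg{\gamma}$ for (i), in $\acfrg{\beta}$ for (ii)), so the single inclusion forces equality. You instead verify both inclusions by hand. For (i) this is harmless, but for (ii) your reverse inclusion relies on the weakly left-resolving hypothesis (to compute $r(C\cap r(\alpha\beta),\gamma)$), whereas the paper's ultrafilter shortcut proves the lemma without it---note that the statement of the lemma, unlike Lemma~\ref{lemma:g-class.properties}(iv), does not assume weak left-resolving. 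Your alternative route via mutual invertibility of $g$ and $h$ together with Lemma~\ref{lemma:g-class.properties}(iv) would also work and is conceptually clean, though in the paper's ordering that invertibility is only recorded afterwards in Lemma~\ref{lemma:h-class.properties2}, and Lemma~\ref{lemma:g-class.properties}(iv) does require weak left-resolving, so that route would still import the extra hypothesis.
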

\begin{proof}
	For a given $\ft\in X_{\alpha\beta\gamma}$, since $\ft\subseteq \usetr{\ft}{\acfrg{\beta\gamma}}$ one obtains \[h_{[\alpha\beta]\gamma}(\ft)=\usetr{\ft}{\acfrg{\gamma}}\subseteq \usetr{\left(\usetr{\ft}{\acfrg{\beta\gamma}}\right)}{\acfrg{\gamma}}=h_{[\beta]\gamma}\circ h_{[\alpha]\beta\gamma}(\ft), \] which implies these are equal since they are both ultrafilters in $\acfrg{\gamma}$. This proves (i). As for (ii), first observe that Lemma \ref{lemma:f-class.preserves.Xalphabetas}.(\ref{item:f-class.preserves.parentheses}) says the arrow on the right of the diagram makes sense. If $\ft\in X_{\alpha\beta\gamma}$ then \[ h_{[\alpha]\beta}\circ f_{\alpha\beta[\gamma]}(\ft)=\{D\in\acfrg{\beta}\ |\ \exists\, C\in\acfrg{\alpha\beta}\ \text{with}\ r(C,\gamma)\in\ft\ \text{and}\  C\subseteq D \}. \] For a given $D\in h_{[\alpha]\beta}\circ f_{\alpha\beta[\gamma]}(\ft)$, if $C\in\acfrg{\alpha\beta}$ is as above then $r(C,\gamma)\subseteq r(D,\gamma)\in \acfrg{\beta\gamma}$, so that \[ r(D,\gamma)\in\usetr{r(C,\gamma)}{\acfrg{\beta\gamma}}\subseteq\usetr{\ft}{\acfrg{\beta\gamma}}=h_{[\alpha]\beta\gamma}(\ft), \] and therefore $D\in f_{\beta[\gamma]}\circ h_{[\alpha]\beta\gamma}(\ft)$. The result now follows since both $f_{\beta[\gamma]}\circ h_{[\alpha]\beta\gamma}(\ft)$ and $h_{[\alpha]\beta}\circ f_{\alpha\beta[\gamma]}(\ft)$ are ultrafilters in $\acfrg{\beta}$.
\end{proof}

\begin{lemma}
	\label{lemma:h-class.properties2}
	Let $\alpha\in\awplus$ and $\beta\in\awstar$ be such that $\alpha\beta\in\awplus$. Then,
	\begin{enumerate}[(i)]
		\item\label{item:h-class.g-class.inverses} The functions $\newf{h_{[\alpha]\beta}}{X_{\alpha\beta}}{X_{(\alpha)\beta}}$ and $\newf{g_{(\alpha)\beta}}{X_{(\alpha)\beta}}{X_{\alpha\beta}}$ are mutual inverses.
		
		\item\label{item:h-class.preserves.sinks} $h_{[\alpha]\beta}(X_{\alpha\beta}^{sink})\subseteq X_\beta^{sink}.$
		
		\item\label{item:h-class.continuous}
		$\newf{h_{[\alpha]\beta}}{X_{\alpha\beta}}{X_{(\alpha)\beta}}$ is continuous.
	\end{enumerate}
	
\end{lemma}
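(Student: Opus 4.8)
The three statements are largely independent, so the plan is to treat them in turn, with (i) doing the real work. For (i) I would verify directly that $g_{(\alpha)\beta}\circ h_{[\alpha]\beta}=\mathrm{id}_{X_{\alpha\beta}}$ and $h_{[\alpha]\beta}\circ g_{(\alpha)\beta}=\mathrm{id}_{X_{(\alpha)\beta}}$, using only that $\acfrg{\alpha\beta}=\lset{r(\alpha\beta)}$ is a principal ideal inside $\acfrg{\beta}$, that filters are closed under finite meets, and that $r(\alpha\beta)$ belongs to every $\ft\in X_{(\alpha)\beta}$. Concretely: given $\ft\in X_{\alpha\beta}$ and $B\in h_{[\alpha]\beta}(\ft)=\usetr{\ft}{\acfrg{\beta}}$, pick $A\in\ft$ with $A\subseteq B$; since $A=A\cap r(\alpha\beta)\subseteq B\cap r(\alpha\beta)\in\acfrg{\alpha\beta}$, the filter $\ft$ contains $B\cap r(\alpha\beta)$, giving $g_{(\alpha)\beta}(h_{[\alpha]\beta}(\ft))\subseteq\ft$, while the reverse inclusion is clear because $A=A\cap r(\alpha\beta)$ for every $A\in\ft\subseteq h_{[\alpha]\beta}(\ft)$. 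Dually, for $\ft\in X_{(\alpha)\beta}$ and $B\in\acfrg{\beta}$ with $C\cap r(\alpha\beta)\subseteq B$ for some $C\in\ft$, the element $C\cap r(\alpha\beta)$ lies in $\ft$ (as $r(\alpha\beta)\in\ft$), hence so does $B$; the reverse inclusion takes $C=B$. This is really just the standard bijection, for a (generalized) Boolean algebra $\acfrg{\beta}$ and an element $y=r(\alpha\beta)$, between ultrafilters containing $y$ and ultrafilters of the ideal $\lset{y}$.

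Part (ii) is then immediate from the observation that $\ft\subseteq\usetr{\ft}{\acfrg{\beta}}=h_{[\alpha]\beta}(\ft)$: if $\ft\in X_{\alpha\beta}^{sink}$, then for each $b\in\alf$ a witness $A\in\ft$ with $r(A,b)=\emptyset$ is still an element of $h_{[\alpha]\beta}(\ft)$, and since $h_{[\alpha]\beta}(\ft)\in X_{(\alpha)\beta}\subseteq X_\beta$ by Proposition \ref{prop:h-class.definition.enabler}, we conclude $h_{[\alpha]\beta}(\ft)\in X_\beta^{sink}$.

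For (iii) I would run a net argument paralleling the proof of Lemma \ref{lemma:g-class.properties2}(ii): take $\{\ft_\lambda\}_\lambda\subseteq X_{\alpha\beta}$ converging to $\ft$, fix $D\in\acfrg{\beta}$, and show that eventually $D\in h_{[\alpha]\beta}(\ft_\lambda)$ if and only if $D\in h_{[\alpha]\beta}(\ft)$. If $D\in h_{[\alpha]\beta}(\ft)$, pick $C\in\ft\subseteq\acfrg{\alpha\beta}$ with $C\subseteq D$; pointwise convergence gives $C\in\ft_\lambda$ eventually, whence $D\in\usetr{\ft_\lambda}{\acfrg{\beta}}=h_{[\alpha]\beta}(\ft_\lambda)$. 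If $D\notin h_{[\alpha]\beta}(\ft)$, then since $h_{[\alpha]\beta}(\ft)$ is an ultrafilter (Proposition \ref{prop:h-class.definition.enabler}), Proposition \ref{prop:ultrafilter.intersection} yields $C\in\ft$ with $C\cap D=\emptyset$; eventually $C\in\ft_\lambda$, and for such $\lambda$ a putative $C'\in\ft_\lambda$ with $C'\subseteq D$ would force the nonempty meet $C\cap C'\in\ft_\lambda$ to be contained in $C\cap D=\emptyset$, a contradiction. (Alternatively, by (i) the map $g_{(\alpha)\beta}$ is a continuous bijection onto $X_{\alpha\beta}$; since $X_{(\alpha)\beta}=\{\ft\in X_\beta : r(\alpha\beta)\in\ft\}$ is a compact clopen subset of the Stone space $X_\beta$ while $X_{\alpha\beta}$ is Hausdorff, $g_{(\alpha)\beta}$ is a homeomorphism and $h_{[\alpha]\beta}=g_{(\alpha)\beta}^{-1}$ is continuous.) The only point requiring real care is this last step, where throughout one must push membership questions in $\acfrg{\beta}$ down to questions about elements of the subalgebra $\acfrg{\alpha\beta}$, since the ultrafilters in play are generated from $\acfrg{\alpha\beta}$ whereas the topology sees all of $\acfrg{\beta}$; I expect that bookkeeping to be the main, if modest, obstacle.
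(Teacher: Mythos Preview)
Your proof is correct and follows essentially the same route as the paper. The only noteworthy difference is in part (i): you establish both inclusions for each composite directly, whereas the paper shows only $\ft\subseteq h_{[\alpha]\beta}\circ g_{(\alpha)\beta}(\ft)$ and $\ft\subseteq g_{(\alpha)\beta}\circ h_{[\alpha]\beta}(\ft)$ and then invokes maximality of ultrafilters to get equality; parts (ii) and (iii) match the paper almost verbatim, and your parenthetical compactness argument for (iii) is a valid alternative the paper does not use.
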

\begin{proof}
	To prove (i), if $\ft\in X_{(\alpha)\beta}$ then $h_{[\alpha]\beta}\circ g_{(\alpha)\beta}(\ft)$ is an ultrafilter in $\acfrg{\beta}$; also, for each $C\in\ft$ one has $C\in\usetr{C\cap r(\alpha\beta)}{\acfrg{\beta}}\subseteq h_{[\alpha]\beta}\circ g_{(\alpha)\beta}(\ft)$, which shows that $\ft\subseteq h_{[\alpha]\beta}\circ g_{(\alpha)\beta}(\ft)$, hence they are equal. On the other hand if $\ft\in X_{\alpha\beta}$, then \[ g_{(\alpha)\beta}\circ h_{[\alpha]\beta}(\ft)= \{D\cap r(\alpha\beta)\ |\ D\in\acfrg{\beta}\ \text{such that}\ \exists\, C\in\ft\ \text{with}\ C\subseteq D \}. \]
	Given $C\in\ft$, from $\acfrg{\alpha\beta}\subseteq\acfrg{\beta}$ it can be seen that $C=C\cap r(\alpha\beta)$ and thus $C\in g_{(\alpha)\beta}\circ h_{[\alpha]\beta}(\ft)$, establishing $\ft\subseteq g_{(\alpha)\beta}\circ h_{[\alpha]\beta}(\ft)$ and again equality follows.
	
	Now, let us prove (ii): suppose $\ft\in X_{\alpha\beta}^{sink}$; then, for each $a\in\alf$ there exists $D\in\ft$ such that $r(D,a)=\emptyset$. Since $D\in\ft\subseteq\usetr{\ft}{\acfrg{\beta}}=h_{[\alpha]\beta}(\ft)$ it can be concluded that for each $a\in\alf$ there exists $D\in h_{[\alpha]\beta}(\ft)$ such that $r(D,a)=\emptyset$, which means $h_{[\alpha]\beta}(\ft)\in X_{\beta}^{sink}$, as desired.
	
	As for (iii), consider a net $\{\ft_\lambda\}_\lambda\subseteq X_{\alpha\beta}$ that converges to $\ft\in X_{\alpha\beta}$, and let $D\in\acfrg{\beta}$ be arbitrary. If $D\in h_{[\alpha]\beta}(\ft)$ then there exists $C\in\ft$ such that $C\subseteq D$. The convergence of the $\ft_\lambda$ say then that there is an index $\lambda_0$ such that $\lambda\geq\lambda_0$ implies $C\in\ft_\lambda$, and hence \[ D\in\usetr{C}{\acfrg{\beta}}\subseteq\usetr{\ft_\lambda}{\acfrg{\beta}}=h_{[\alpha]\beta}(\ft_\lambda). \] On the other hand, if $D\in\acfrg{\beta}\backslash h_{[\alpha]\beta}(\ft)$ then there exists $C\in h_{[\alpha]\beta}(\ft)$ such that $C\cap D=\emptyset$, by Proposition \ref{prop:ultrafilter.intersection}. What has been just shown above says there exists $\lambda_0$ such that $\lambda\geq\lambda_0$ implies $C\in h_{[\alpha]\beta}(\ft_\lambda)$, and again Proposition \ref{prop:ultrafilter.intersection} can be used to ensure $\lambda\geq\lambda_0$ implies $D\notin h_{[\alpha]\beta}(\ft_\lambda)$; it then follows that the net $\{h_{[\alpha]\beta}(\ft_\lambda)\}_\lambda$ converges to $h_{[\alpha]\beta}(\ft)$, hence $h_{[\alpha]\beta}$ is continuous.
\end{proof}

The functions above can be used to produce tools for working with filters in $E(S)$. Let us begin with a function for gluing labelled paths to a filter $\xi$ in $E(S)$. Given labelled paths $\alpha\in\awplus$ and $\beta\in\awleinf$ such that $\alpha\beta\in\awleinf$, the problem is to construct a complete family of ultrafilters for $\alpha\beta$ starting from a complete family of ultrafilters for $\beta$. Roughly, one begins with the complete family for $\beta$, say $\{\ft_n\}_n$ with $0\leq n\leq |\beta|$, cuts each ultrafilter $\ft_n$ by the range of $\alpha\beta_{1,n}$, and adds new ultrafilters at the beginning of the family, one for each $\alpha_{1,i}$, $0\leq i\leq|\alpha|$, to obtain a complete family $\{\ftg{J}_i\}_i$ for $\alpha\beta$ (remembering that, if the resulting family is to be complete, we have no real choice as to which filters to add at the beginning of the family, from what was seen in Section \ref{section:preliminaries}).

More precisely, consider first the case $\beta=\eword$. In this case the complete family $\{\ft_n\}_n$ for $\eword$ consists of a single filter $\ft_0\subseteq\acf$. Define \[ \ftg{J}_{|\alpha|}=\{C\cap r(\alpha)\ |\ C\in\ft_0\}=g_{(\alpha)\eword}(\ft_0) \] and, for for $0\leq i< |\alpha|$, set \[ \ftg{J}_i=\{D\in\acfrg{\alpha_{1,i}}\ |\ r(D,\alpha_{i+1,|\alpha|})\in\ftg{J}_{|\alpha|}\}=f_{\alpha_{1,i}[\alpha_{i+1,|\alpha|}]}(\ftg{J}_{|\alpha|}). \]
Under a suitable condition on the filter $\ft_0$ (Proposition \ref{prop:G-class.definition.enabler}.(\ref{item:G-class.definition.enabler.eword}) below), the family $\{\ftg{J}_i\}_i$ obtained is a complete family for $\alpha=\alpha\eword$.

Now, for the case where $\beta\neq\eword$: the ultrafilter $\ft_1\in X_{\beta_1}$ is translated $|\alpha|$ units to create the ultrafilter $\ftg{J}_{|\alpha|+1}\in X_{\alpha\beta_1}$, given by \[ \ftg{J}_{|\alpha|+1} = \{C\cap r(\alpha\beta_1)\ |\ C\in\ft_1\}=g_{(\alpha)\beta_1}(\ft_1). \]

More generally, for $1\leq n\leq|\beta|$ (or $n<|\beta|$ if $\beta$ is infinite) one defines
\[ \ftg{J}_{|\alpha|+n} = \{C\cap r(\alpha\beta_{1,n})\ |\ C\in\ft_n\}=g_{(\alpha)\beta_{1,n}}(\ft_n). \]

\begin{remark}
	\label{remark:G-class.ft1.enough}
	Note that, in order for this construction to work, one must have $\ft_n\in X_{(\alpha)\beta_{1,n}}$ for all $n$, which follows from requiring $\ft_1\in X_{(\alpha)\beta_1}$ and using the completeness of $\{\ft_n\}_n$ for $\beta$. Indeed, if $\ft_n\in X_{(\alpha)\beta_n}$ then $r(\alpha\beta_{1,n})\in\ft_n$, and so \[ r(\alpha\beta_{1,n+1})=r(r(\alpha\beta_{1,n}),\beta_{n+1})\in \ft_{n+1}, \] that is, $\ft_{n+1}\in X_{(\alpha)\beta_{1,n+1}}$.
\end{remark}
Also, for $0\leq i\leq|\alpha|$, define \dmal{\ftg{J}_i&=f_{\alpha_{1,i}[\alpha_{i+1,|\alpha|}\beta_1]}(\ftg{J}_{|\alpha|+1}) \\
	&=\{D\in\acfrg{\alpha_{1,i}}\ |\ r(D,\alpha_{i+1,|\alpha|}\beta_1)\in\ftg{J}_{|\alpha|+1}\}.}

\begin{remark}\label{remark:G-class.ft0.enough.if.not.empty}
	If it is the case that $\ft_0\neq\emptyset$ in the complete family $\{\ft_n\}_n$, then necessarily one has $\ftg{J}_{|\alpha|}=g_{(\alpha)\eword}(\ft_0)$. This is true because, as mentioned above, $\ftg{J}_{|\alpha|}=f_{\alpha_{1,|\alpha|}[\alpha_{|\alpha|+1,|\alpha|}\beta_1]}(\ftg{J}_{|\alpha|+1})=f_{\alpha[\beta_1]}(\ftg{J}_{|\alpha|+1})$, and therefore
	\dmal{\ftg{J}_{|\alpha|}=f_{\alpha[\beta_1]}(\ftg{J}_{|\alpha|+1})&=f_{\alpha\eword[\beta_1]}\circ g_{(\alpha)\eword\beta_1}(\ft_1) \\ &= g_{(\alpha)\eword}\circ f_{\eword[\beta_1]}(\ft_1)=g_{(\alpha)\eword}(\ft_0),} using Lemma \ref{lemma:g-class.properties} (\ref{item:f-class.and.g-class.compatible}).
\end{remark}

\begin{proposition}
	\label{prop:G-class.definition.enabler}
	Suppose the labelled space $\lspace$ is weakly left-resolving, and let $\alpha\in\awplus$.
	\begin{enumerate}[(i)]
		\item\label{item:G-class.definition.enabler.eword} If $\ft_0\in X_{(\alpha)\eword}$, then $\{\ftg{J}_i\}_i$, $0\leq i\leq |\alpha|$ as above is a complete family of ultrafilters for $\alpha$.
		\item\label{item:G-class.definition.enabler.not.eword} If $\beta\in\awleinf\backslash\{\eword\}$ is such that $\alpha\beta\in\awleinf$ and $\{\ft_n\}_n$ is a complete family of ultrafilters for $\beta$ such that $\ft_1\in X_{(\alpha)\beta_1}$, then $\{\ftg{J}_i\}_i$ as above is a complete family of ultrafilters for $\alpha\beta$.
	\end{enumerate} 
\end{proposition}
\begin{proof}
	The proofs of both items are similar. We prove (ii): for $0\leq i<|\alpha|$, \dmal{\ftg{J}_{i}&= f_{\alpha_{1,i}[\alpha_{i+1,|\alpha|}\beta_1]}(\ftg{J}_{|\alpha|+1})\\&=f_{\alpha_{1,i}[\alpha_{i+1}]}\circ f_{\alpha_{1,i+1}[\alpha_{i+2,|\alpha|}\beta_1]}(\ftg{J}_{|\alpha|+1})=f_{\alpha_{1,i}[\alpha_{i+1}]}(\ftg{J}_{i+1}),} and additionally $\ftg{J}_{|\alpha|}=f_{\alpha[\beta_1]}(\ftg{J}_{|\alpha|+1})$ by definition. On the other hand for $i>|\alpha|$, say $i=|\alpha|+n$ for $n\geq 1$, the definitions above coupled with Lemma \ref{lemma:g-class.properties}.(\ref{item:f-class.and.g-class.compatible}) and the fact the family $\{\ft_n\}_n$ is complete for $\beta$ give \dmal{\ftg{J}_i&=g_{(\alpha)\beta_{1,n}}(\ft_n)=g_{(\alpha)\beta_{1,n}}(f_{\beta_{1,n}[\beta_{n+1}]}(\ft_{n+1}))\\
		&=f_{\alpha\beta_{1,n}[\beta_{n+1}]}\circ g_{(\alpha)\beta_{1,n+1}}(\ft_{n+1})=f_{\alpha\beta_{1,n}[\beta_{n+1}]}(\ftg{J}_{i+1}),} and thus the family $\{\ftg{J}_i\}_i$ is complete for $\alpha\beta$, as claimed.
\end{proof}

Continuing the discussion above, let us now concentrate on the tight filters in $E(S)$: for $\beta\in\awleinf$ let $\ftightw{(\alpha)\beta}$ denote the subset of $\ftightw{\beta}$ given by \[ \ftightw{(\alpha)\beta}=\{\xi\in\ftightw{\beta}\ |\ \xi_0\in X_{(\alpha)\eword}\}. \]

Observe that, for $\beta\neq\eword$, $\xi_0\in X_{(\alpha)\eword}$ is equivalent to $\xi_1\in X_{(\alpha)\beta_1}$, and $r(\alpha)\in\xi_0$ is equivalent to $r(\alpha\beta_1)\in\xi_1$.

For a given tight filter $\xi\in\ftightw{(\alpha)\beta}$, Proposition \ref{prop:G-class.definition.enabler} says the complete family $\{\xi_n\}_n$ for $\beta$ gives rise to a complete family $\{\ftg{J}_i\}_i$ of ultrafilters for $\alpha\beta$, and therefore to a filter $\eta\in\filt_{\alpha\beta}$ associated with this family. The purpose of the next theorem is to show this resulting filter is also tight.

\begin{theorem}
	\label{thm:G-class.preserves.tight}
	Suppose the labelled space $\lspace$ is weakly left-resolving, that $\acf$ is closed under relative complements, and let $\alpha\in\awplus$ and $\beta\in\awleinf$ be such that $\alpha\beta\in\awleinf$. If $\xi\in\ftightw{(\alpha)\beta}$ and $\{\ftg{J}_i \}_i$ is the complete family for $\alpha\beta$ constructed as above from $\{\xi_n\}_n$, then the filter $\eta\in\filtw{\alpha\beta}$ associated with $\{\ftg{J}_i\}_i$ is tight.
\end{theorem}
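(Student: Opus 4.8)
The plan is to split the argument according to whether $\beta$ (equivalently $\alpha\beta$) is finite or infinite, since tight filters of these two types are described in very different ways.

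\emph{Infinite type.} If $\beta\in\awinf$ then $\alpha\beta\in\awinf$ and $\eta$ is of infinite type. Since every ultrafilter is tight, it is enough to show $\eta$ is an ultrafilter; as $\acf$ is closed under relative complements, Theorem~\ref{thm:ultrafilters} reduces this to checking that $\ftg{J}_i$ is an ultrafilter for every $i>0$ and that $\ftg{J}_0$ is an ultrafilter or empty. Now $\xi$ is tight of infinite type, hence an ultrafilter of infinite type (recalled just before Proposition~\ref{prop:tight.filters.of.finite.type}), so each $\xi_n$ with $n\geq1$ is an ultrafilter; moreover $\xi\in\ftightw{(\alpha)\beta}$, propagated up the complete family (cf.\ Remark~\ref{remark:G-class.ft1.enough}), gives $\xi_n\in X_{(\alpha)\beta_{1,n}}$ for all $n$. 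Then $\ftg{J}_{|\alpha|+n}=g_{(\alpha)\beta_{1,n}}(\xi_n)$ is an ultrafilter by Lemma~\ref{lemma:glue.alpha.ok}; for $1\leq i\leq|\alpha|$, $\ftg{J}_i=f_{\alpha_{1,i}[\alpha_{i+1,|\alpha|}\beta_1]}(\ftg{J}_{|\alpha|+1})$ is an ultrafilter because $f_{\alpha_{1,i}[\,\cdot\,]}$ is dual to a morphism of Boolean algebras (end of Section~\ref{section:preliminaries}); and $\ftg{J}_0=f_{\eword[\alpha\beta_1]}(\ftg{J}_{|\alpha|+1})$ is an ultrafilter or empty. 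Hence $\eta$ is an ultrafilter of infinite type, so tight.

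\emph{Finite type.} If $\beta\in\awstar$ then $\alpha\beta\in\awplus$ and $\eta$ is of finite type with largest word $\alpha\beta$, and directly from the construction $\eta_{|\alpha\beta|}=g_{(\alpha)\beta}(\xi_{|\beta|})$. By Proposition~\ref{prop:tight.filters.of.finite.type} applied to $\xi$, $\xi_{|\beta|}$ is an ultrafilter, and $\xi\in\ftightw{(\alpha)\beta}$ forces $r(\alpha\beta)\in\xi_{|\beta|}$ (propagating $\xi_0\in X_{(\alpha)\eword}$ up the complete family, cf.\ Remark~\ref{remark:G-class.ft1.enough}), so $\xi_{|\beta|}\in X_{(\alpha)\beta}$ and $\eta_{|\alpha\beta|}=g_{(\alpha)\beta}(\xi_{|\beta|})$ is an ultrafilter by Lemma~\ref{lemma:glue.alpha.ok}. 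By the other direction of Proposition~\ref{prop:tight.filters.of.finite.type} it then suffices to exhibit a net witnessing condition (a) or (b) for $\eta_{|\alpha\beta|}$, and the plan is to obtain it by pushing forward, through the gluing maps, a net witnessing the corresponding condition for $\xi_{|\beta|}$ — which exists because $\xi$ is tight.

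In case (a) there is a net $\{\ftg{F}_\lambda\}\subseteq X_{\beta}^{sink}$ with $\ftg{F}_\lambda\to\xi_{|\beta|}$; since $r(\alpha\beta)\in\xi_{|\beta|}$ and $X_{(\alpha)\beta}$ is open in $X_\beta$ (Lemma~\ref{lemma:g-class.properties2}.(\ref{item:Xalphabeta.open})), a tail of this net lies in $X_{(\alpha)\beta}\cap X_{\beta}^{sink}$, and applying $g_{(\alpha)\beta}$ — continuous by Lemma~\ref{lemma:g-class.properties2}.(\ref{item:g-class.continuous}) and carrying $X_{(\alpha)\beta}\cap X_{\beta}^{sink}$ into $X_{\alpha\beta}^{sink}$ by Lemma~\ref{lemma:g-class.properties2}.(\ref{item:g-class.preserves.sinks}) — produces a net in $X_{\alpha\beta}^{sink}$ converging to $\eta_{|\alpha\beta|}$. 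In case (b) there is a net $\{(t_\lambda,\ftg{F}_\lambda)\}$ with $\ftg{F}_\lambda\in X_{\beta t_\lambda}$, $f_{\beta[t_\lambda]}(\ftg{F}_\lambda)\to\xi_{|\beta|}$ and $t_\lambda\to\infty$; for $\lambda$ large one has $r(\alpha\beta)\in f_{\beta[t_\lambda]}(\ftg{F}_\lambda)$, hence $r(\alpha\beta t_\lambda)\in\ftg{F}_\lambda$, so $\ftg{F}_\lambda\in X_{(\alpha)\beta t_\lambda}$ and $f_{\beta[t_\lambda]}(\ftg{F}_\lambda)\in X_{(\alpha)\beta}$. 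Setting $\ftg{G}_\lambda=g_{(\alpha)\beta t_\lambda}(\ftg{F}_\lambda)\in X_{\alpha\beta t_\lambda}$, the commuting square of Lemma~\ref{lemma:g-class.properties}.(\ref{item:f-class.and.g-class.compatible}) (with $\gamma=t_\lambda$) together with the continuity of $g_{(\alpha)\beta}$ give
\[ f_{\alpha\beta[t_\lambda]}(\ftg{G}_\lambda)=g_{(\alpha)\beta}\bigl(f_{\beta[t_\lambda]}(\ftg{F}_\lambda)\bigr)\longrightarrow g_{(\alpha)\beta}(\xi_{|\beta|})=\eta_{|\alpha\beta|}, \]
while $\{t_\lambda\}$ still converges to infinity, so $\{(t_\lambda,\ftg{G}_\lambda)\}$ witnesses (b) for $\eta$. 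In either case $\eta$ is tight.

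\emph{Where the difficulty lies.} There is no single hard step; the whole point is the bookkeeping — checking that the nets produced for $\xi$, once translated by $\alpha$, eventually sit inside the ``parenthesized'' sets $X_{(\alpha)\,\cdot\,}$ so that the gluing maps apply, and that these maps intertwine the appropriate $f$-maps, which is precisely what Remark~\ref{remark:G-class.ft1.enough}, Lemma~\ref{lemma:g-class.properties}.(\ref{item:f-class.and.g-class.compatible}) and Lemma~\ref{lemma:g-class.properties2} are for. The only mildly delicate points are the degenerate case $\beta=\eword$ (where $f_{\eword[\,\cdot\,]}$ might a priori return $\emptyset$ instead of an ultrafilter, which is ruled out here because $r(\alpha)$ lies eventually in the relevant preimage) and the index matching $i=|\alpha|+n$; neither is a genuine obstacle.
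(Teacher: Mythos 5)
Your proof is correct and follows essentially the same route as the paper's: the infinite case is handled by showing the glued family consists of ultrafilters (so $\eta$ is an ultrafilter, hence tight), and the finite case by pushing the nets from Proposition~\ref{prop:tight.filters.of.finite.type} through $g_{(\alpha)\beta}$ using the openness of $X_{(\alpha)\beta}$, continuity, sink preservation, and the commuting square of Lemma~\ref{lemma:g-class.properties}.(\ref{item:f-class.and.g-class.compatible}). Your explicit check that $\eta_{|\alpha\beta|}$ is an ultrafilter is a point the paper leaves implicit, but it is not a different argument.
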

\begin{proof}
	Suppose first that $\beta\in\awinf$. In this case $\{\xi_n\}_n$ is a complete family of ultrafilters for $\beta$ (by Theorems \ref{thm:tight.filters.in.es}.(\ref{item:tight.filters.in.es.infinite.type}) and \ref{thm:ultrafilters}.(\ref{item:ultrafilters.infinite.type.special.case})'), and $\xi_1\in X_{(\alpha)\beta_1}$ since $\xi\in\ftightw{(\alpha)\beta}$. Proposition \ref{prop:G-class.definition.enabler} then says the family $\{\ftg{J}_i\}_i$ constructed from $\{\xi_n\}_n$ is a complete family of ultrafilters for $\alpha\beta$, and thus the filter $\eta\in\filtw{\alpha\beta}$ associated with this family is tight, again by Theorems \ref{thm:tight.filters.in.es}.(\ref{item:tight.filters.in.es.infinite.type}) and \ref{thm:ultrafilters}.(\ref{item:ultrafilters.infinite.type.special.case})'.
	
	Next, suppose $\beta\in\awstar$; from Proposition \ref{prop:tight.filters.of.finite.type}, $\xi$ may be of one of two kinds of tight filters, and we consider each in turn: for the first kind, there exists a net $\{\ft_\lambda\}_\lambda\subseteq X_{\beta}^{sink}$ that converges to $\xi_{|\beta|}\in X_{(\alpha)\beta}$. The set $X_{(\alpha)\beta}$ is open in $X_\beta$ by Lemma \ref{lemma:g-class.properties2}.(\ref{item:Xalphabeta.open}), so there is an index $\lambda_0$ such that $\lambda\geq\lambda_0$ implies $\ft_\lambda\in X_{(\alpha)\beta}$. Now $\{g_{(\alpha)\beta}(\ft_\lambda)\}_{\lambda\geq\lambda_0}$ is a net in $X_{\alpha\beta}^{sink}$ due to Lemma \ref{lemma:g-class.properties2}.(\ref{item:g-class.preserves.sinks}), and it converges to $\ftg{J}_{|\alpha\beta|}=g_{(\alpha)\beta}(\xi_{|\beta|})$, by the continuity of $g_{(\alpha)\beta}$ established with Lemma \ref{lemma:g-class.properties2}.(\ref{item:g-class.continuous}), whence $\eta$ is tight.
	
	For the second kind, there is a net $\{(t_\lambda,\ft_\lambda)\}_\lambda$ with $t_\lambda\in\alf$ and $\ft_\lambda\in X_{\beta t_\lambda}$ for all $\lambda$ such that $\{t_\lambda\}_\lambda$ converges to infinity in $\alf$ and $\{f_{\beta[t_\lambda]}(\ft_\lambda)\}_\lambda$ converges to $\xi_{|\beta|}$ in $X_\beta$. Again there is an index $\lambda_0$ such that $\lambda\geq\lambda_0$ implies $f_{\beta[t_\lambda]}(\ft_\lambda)\in X_{(\alpha)\beta}$, and for these $\lambda$ one must then have $\ft_\lambda\in X_{(\alpha)\beta t_\lambda}$, by Lemma \ref{lemma:f-class.preserves.Xalphabetas}. The net $\{g_{(\alpha)\beta t_\lambda}(\ft_\lambda)\}_{\lambda\geq\lambda_0}$ satisfies, as given by Lemma \ref{lemma:g-class.properties}.(\ref{item:f-class.and.g-class.compatible}), \[ f_{\alpha\beta[t_\lambda]}(g_{(\alpha)\beta t_\lambda}(\ft_\lambda))= g_{(\alpha)\beta}(f_{\beta[t_{\lambda}]}(\ft_\lambda)),\] and this converges to $g_{(\alpha)\beta}(\xi_{|\beta|})=\ftg{J}_{|\alpha\beta|}$, again showing that $\eta$ is tight, for $\{t_\lambda\}_{\lambda\geq\lambda_0}$ still converges to infinity in $\alf$.
	
\end{proof}

Under the hypotheses of Theorem \ref{thm:G-class.preserves.tight} above, it is then possible to define a function \[\newf{G_{(\alpha)\beta}}{\ftightw{(\alpha)\beta}}{\ftightw{\alpha\beta}}\] taking a tight filter $\xi\in\ftightw{(\alpha)\beta}$ to the tight filter $\eta\in\ftightw{\alpha\beta}$ given by the theorem. Also, for $\alpha=\eword$ define $\ftightw{(\eword)\beta}=\ftightw{\beta}$ and let $G_{(\eword)\beta}$ be the identity function on $\ftightw{\beta}$.

\begin{lemma}
	\label{lemma:G-class.properties}
	Suppose the labelled space $\lspace$ is weakly left-resolving, that $\acf$ is closed under relative complements, and let $\alpha,\beta\in\awplus$ and $\gamma\in\awleinf$ with $\alpha\beta\gamma\in\awleinf$ be given. Then,
	\begin{enumerate}[(i)]
		\item\label{item:G-class.domain.containment}
		$\ftightw{(\alpha\beta)\gamma}\subseteq\ftightw{(\beta)\gamma}$;
		\item\label{item:G-class.composition.rule}
		$G_{(\alpha\beta)\gamma}=G_{(\alpha)\beta\gamma}\circ G_{(\beta)\gamma}$.
	\end{enumerate}
\end{lemma}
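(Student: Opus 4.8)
The two claims concern the domain containment $\ftightw{(\alpha\beta)\gamma}\subseteq\ftightw{(\beta)\gamma}$ and the composition rule $G_{(\alpha\beta)\gamma}=G_{(\alpha)\beta\gamma}\circ G_{(\beta)\gamma}$. Item (i) should be essentially immediate: by definition $\xi\in\ftightw{(\alpha\beta)\gamma}$ means $\xi_0\in X_{(\alpha\beta)\eword}$, i.e. (working at level $1$ if $\gamma\neq\eword$, or directly at level $0$) $r(\alpha\beta)\in\xi_{|\cdot|}$ for the appropriate bottom filter; since $r(\alpha\beta)\subseteq r(\beta)$ and the bottom filter is in particular upward-closed in its Boolean algebra, $r(\beta)$ also belongs to it, so $\xi_0\in X_{(\beta)\eword}$, i.e. $\xi\in\ftightw{(\beta)\gamma}$. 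This is just the tight-filter analogue of Lemma \ref{lemma:g-class.properties}.(\ref{item:g-class.domain.containment}), and I would state it that way. One should also note that item (i) guarantees the right-hand side $G_{(\alpha)\beta\gamma}\circ G_{(\beta)\gamma}$ in (ii) is well-defined: $G_{(\beta)\gamma}$ sends $\ftightw{(\alpha\beta)\gamma}\subseteq\ftightw{(\beta)\gamma}$ into $\ftightw{\beta\gamma}$, and one must check the image actually lands in $\ftightw{(\alpha)\beta\gamma}$ so that $G_{(\alpha)\beta\gamma}$ can be applied — this is the tight version of Lemma \ref{lemma:g-class.properties}.(\ref{item:g-class.composition.defined}), proved the same way from $r(\alpha\beta\gamma)\in g_{(\beta)\gamma}(\xi_{|\cdot|})$.

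For item (ii), the strategy is to reduce the equality of the two tight filters to the level of complete families and then invoke the already-proven composition rule for the $g$-maps. Given $\xi\in\ftightw{(\alpha\beta)\gamma}$ with complete family $\{\xi_n\}_n$ for $\gamma$, let $\{\ftg{K}_i\}_i$ be the complete family for $\beta\gamma$ defining $\eta=G_{(\beta)\gamma}(\xi)$, let $\{\ftg{J}_i\}_i$ be the complete family for $\alpha\beta\gamma$ defining $G_{(\alpha)\beta\gamma}(\eta)$, and let $\{\ftg{L}_i\}_i$ be the complete family for $\alpha\beta\gamma$ defining $G_{(\alpha\beta)\gamma}(\xi)$. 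Because filters in $E(S)$ are in bijection with pairs (labelled path, complete family) by Theorem \ref{thm:filters.in.E(S)}, it suffices to show $\ftg{J}_i=\ftg{L}_i$ for all $i$. Both families have the same top-most "new" filters obtained by translating $\xi_n$; concretely, at a level of the form $|\alpha\beta|+n$ (with $n\geq 1$, or the $\eword$-analogue handled via Remark \ref{remark:G-class.ft0.enough.if.not.empty}) one has on one side $\ftg{L}_{|\alpha\beta|+n}=g_{(\alpha\beta)\gamma_{1,n}}(\xi_n)$ and on the other $\ftg{J}_{|\alpha\beta|+n}=g_{(\alpha)\beta\gamma_{1,n}}(\ftg{K}_{|\beta|+n})=g_{(\alpha)\beta\gamma_{1,n}}\bigl(g_{(\beta)\gamma_{1,n}}(\xi_n)\bigr)$, and these agree by Lemma \ref{lemma:g-class.properties}.(\ref{item:g-class.composition.rule}). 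Since a complete family for a path is completely determined by any one of its filters together with the $f$-maps (the defining property of completeness recalled after Theorem \ref{thm:filters.in.E(S)}), agreement of $\ftg{J}$ and $\ftg{L}$ at one common level $i_0$ beyond $|\alpha\beta|$ forces agreement at all levels $i\leq i_0$, and for $i>i_0$ one continues level by level using that both $\{\ftg{J}_i\}_i$ and $\{\ftg{L}_i\}_i$ are built from the same tail $\{\xi_n\}_n$ via the $g$-maps, again collapsed by Lemma \ref{lemma:g-class.properties}.(\ref{item:f-class.and.g-class.compatible})--(\ref{item:g-class.composition.rule}).

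The only place where genuine care is needed — and what I expect to be the main obstacle — is bookkeeping the index shifts and the boundary case $\gamma=\eword$. When $\gamma=\eword$ the "bottom" filter $\ftg{F}_0$ may or may not be empty, and the construction of the glued family branches (Remark \ref{remark:G-class.ft0.enough.if.not.empty} versus the $\beta\neq\eword$ recipe); one must check that, whichever branch applies, $G_{(\alpha\beta)\gamma}$ and $G_{(\alpha)\beta\gamma}\circ G_{(\beta)\gamma}$ produce the same $\ftg{J}_{|\alpha\beta|}$, which again comes down to Lemma \ref{lemma:g-class.properties}.(\ref{item:g-class.composition.rule}) with $\gamma=\eword$ there, i.e. $g_{(\alpha\beta)\eword}=g_{(\alpha)\beta}\circ g_{(\beta)\eword}$. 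Once the indexing is set up cleanly, no new ideas are required beyond the $g$-map composition and $f$–$g$ compatibility lemmas already established, plus the uniqueness half of Theorem \ref{thm:filters.in.E(S)}. I would therefore keep the written proof short: prove (i) in one or two lines, remark that it makes the composite in (ii) well-defined (citing the $g$-analogue), and then for (ii) say "comparing the complete families constructed on both sides level by level, the equality follows from Lemma \ref{lemma:g-class.properties}.(\ref{item:f-class.and.g-class.compatible})--(\ref{item:g-class.composition.rule}) together with the bijection of Theorem \ref{thm:filters.in.E(S)}," spelling out only the representative level computation displayed above.
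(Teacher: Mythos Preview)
Your proposal is correct and follows essentially the same route as the paper: (i) is reduced to the $g$-level containment of Lemma \ref{lemma:g-class.properties}.(\ref{item:g-class.domain.containment}), and (ii) is proved by comparing the complete families of $G_{(\alpha\beta)\gamma}(\xi)$ and $G_{(\alpha)\beta\gamma}\circ G_{(\beta)\gamma}(\xi)$ at levels $|\alpha\beta|+n$ via Lemma \ref{lemma:g-class.properties}.(\ref{item:g-class.composition.rule}), then invoking that a complete family is determined by any one of its members. Your write-up is in fact somewhat more careful than the paper's, which does not explicitly address the well-definedness of the composite or the case $\gamma=\eword$.
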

\begin{proof}
	For (i), Lemma \ref{lemma:g-class.properties}.(\ref{item:g-class.domain.containment}) states $X_{(\alpha\beta)\gamma_1}\subseteq X_{(\beta)\gamma_1}$, at once giving $\ftightw{(\alpha\beta)\gamma}\subseteq\ftightw{(\beta)\gamma}$. As for (ii), consider an arbitrary $\xi\in\ftightw{_{(\alpha\beta)\gamma}}$, and let $\eta=G_{(\alpha\beta)\gamma}(\xi)$, $\sigma=G_{(\beta)\gamma}(\xi)$ and $\rho=G_{(\alpha)\beta\gamma}(\sigma)$; $\eta$ and $\rho$ are both tight filters in $\acfrg{\alpha\beta\gamma}$. For any given $n\geq 1$, note that \dmal{ \eta_{|\alpha\beta|+n}&=g_{(\alpha\beta)\gamma_{1,n}}(\xi_n)=g_{(\alpha)\beta\gamma_{1,n}}\circ g_{(\beta)\gamma_{1,n}}(\xi_n)\\
		&=g_{(\alpha)\beta\gamma_{1,n}}(\sigma_{|\beta|+n})=\rho_{|\alpha|+|\beta|+n}=\rho_{|\alpha\beta|+n},
	}
	and from this one sees that $\eta$ and $\rho$ are associated with the same complete family of ultrafilters (remember, each ultrafilter in a complete family determines uniquely the ones preceding it). This means $\eta=\rho$, and (ii) follows.
\end{proof}

Next is a function for removing labelled paths from a filter $\xi\in E(S)$. The problem now is to construct, from a complete family of ultrafilters for $\alpha\beta$, a new complete family of ultrafilters for $\beta$. This is achieved with the following result.

\begin{lemma}
	\label{prop:H-class.definition.enabler}
	Let $\alpha\in\awplus$ and $\beta\in\awleinf$ be such that $\alpha\beta\in\awleinf$. If $\{\ft_n\}_n$ is a complete family of ultrafilters for $\alpha\beta$, then \[\{h_{[\alpha]\beta_{1,n}}(\ft_{|\alpha|+n})\}_n\] is a complete family of ultrafilters for $\beta$.
\end{lemma}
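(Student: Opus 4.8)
The plan is to verify directly that the family $\{\ftg{G}_n\}_n$ given by $\ftg{G}_n = h_{[\alpha]\beta_{1,n}}(\ft_{|\alpha|+n})$ meets the two defining conditions of a complete family for $\beta$: each $\ftg{G}_n$ is an ultrafilter in $\acfrg{\beta_{1,n}}$, and $\ftg{G}_n = f_{\beta_{1,n}[\beta_{n+1}]}(\ftg{G}_{n+1})$ for every admissible $n$ (indices run over $0 \le n \le |\beta|$, read as $n \ge 0$ when $\beta$ is infinite; the case $\beta = \eword$ is trivial, since there is then a single member and no relation to check). For the first condition, observe that since $\{\ft_k\}_k$ is complete for $\alpha\beta$, its member $\ft_{|\alpha|+n}$ is an ultrafilter in $\acfrg{(\alpha\beta)_{1,|\alpha|+n}} = \acfrg{\alpha\beta_{1,n}}$, i.e.\ $\ft_{|\alpha|+n} \in X_{\alpha\beta_{1,n}}$; Proposition \ref{prop:h-class.definition.enabler}, applied with $\beta_{1,n}$ in place of $\beta$ (with the convention $\beta_{1,0} = \eword$ when $n = 0$), then shows $\ftg{G}_n = h_{[\alpha]\beta_{1,n}}(\ft_{|\alpha|+n}) = \usetr{\ft_{|\alpha|+n}}{\acfrg{\beta_{1,n}}}$ lies in $X_{(\alpha)\beta_{1,n}}$, hence is an ultrafilter in $\acfrg{\beta_{1,n}}$ — and, for $n = 0$, an ultrafilter in $\acf$, which is a legitimate first member of a complete family.

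For the completeness relation, fix $n$ with $0 \le n < |\beta|$. Since $(\alpha\beta)_{1,|\alpha|+n} = \alpha\beta_{1,n}$ and $(\alpha\beta)_{|\alpha|+n+1} = \beta_{n+1}$, completeness of $\{\ft_k\}_k$ gives $\ft_{|\alpha|+n} = f_{\alpha\beta_{1,n}[\beta_{n+1}]}(\ft_{|\alpha|+n+1})$. Applying $h_{[\alpha]\beta_{1,n}}$ to both sides and using the commutative square of Lemma \ref{lemma:h.class.properties}.(\ref{item:f-class.and.h-class.compatible}), with its $\beta$ and $\gamma$ specialized to $\beta_{1,n}$ and $\beta_{n+1}$ — that is, $h_{[\alpha]\beta_{1,n}} \circ f_{\alpha\beta_{1,n}[\beta_{n+1}]} = f_{\beta_{1,n}[\beta_{n+1}]} \circ h_{[\alpha]\beta_{1,n+1}}$ — one obtains
\[
\ftg{G}_n = h_{[\alpha]\beta_{1,n}}(\ft_{|\alpha|+n}) = f_{\beta_{1,n}[\beta_{n+1}]}\bigl(h_{[\alpha]\beta_{1,n+1}}(\ft_{|\alpha|+n+1})\bigr) = f_{\beta_{1,n}[\beta_{n+1}]}(\ftg{G}_{n+1}),
\]
which is precisely the desired relation, so $\{\ftg{G}_n\}_n$ is a complete family for $\beta$.

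There is no real obstacle here: the argument is a short diagram chase built on Proposition \ref{prop:h-class.definition.enabler} and Lemma \ref{lemma:h.class.properties}. The two points that need a little care are the index bookkeeping — aligning the shift by $|\alpha|$ with the truncations $(\alpha\beta)_{1,k}$ and their letters — and the degenerate index $n = 0$, where $\beta_{1,0} = \eword$ and the map $f_{\eword[\beta_1]}$ a priori takes values in $X_\eword \cup \{\emptyset\}$. For that case one checks directly that $\ftg{G}_0 = \usetr{\ft_{|\alpha|}}{\acf}$ contains $r(\alpha) \neq \emptyset$ (being the top element of $\acfrg{\alpha}$, $r(\alpha)$ belongs to the ultrafilter $\ft_{|\alpha|}$), so $\ftg{G}_0$ is a genuine ultrafilter; the $n = 0$ instance of the completeness relation then follows exactly as above, Lemma \ref{lemma:f-class.preserves.Xalphabetas}.(\ref{item:f-class.preserves.parentheses}) guaranteeing that $f_{\eword[\beta_1]}(\ftg{G}_1)$ stays inside $X_{(\alpha)\eword}$.
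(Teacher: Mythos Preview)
Your proof is correct and follows essentially the same approach as the paper: both arguments reduce the completeness relation to the commutative square of Lemma \ref{lemma:h.class.properties}.(\ref{item:f-class.and.h-class.compatible}) applied with $\beta_{1,n}$ and $\beta_{n+1}$ in place of $\beta$ and $\gamma$. Your version is somewhat more explicit than the paper's about verifying that each $\ftg{G}_n$ is indeed an ultrafilter (via Proposition \ref{prop:h-class.definition.enabler}) and about handling the index $n=0$, whereas the paper dispatches these points more tersely.
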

\begin{proof}
	If $\beta=\eword$ then $h_{[\alpha]\eword}(\ft_{|\alpha|})\in X_{(\alpha)\eword}\subseteq X_{\eword}$, so there is nothing to do. 
	Suppose then $\beta\neq\eword$; for any given $n\geq 1$, from Lemma \ref{lemma:h.class.properties}.(\ref{item:f-class.and.h-class.compatible}) one has \dmal{h_{[\alpha]\beta_{1,n}}(\ft_{|\alpha|+n})&=h_{[\alpha]\beta_{1,n}}(f_{\alpha\beta_{1,n}[\beta_{n+1}]}(\ft_{|\alpha|+n+1}))\\
		&=f_{\beta_{1,n}[\beta_{n+1}]}\circ h_{[\alpha]\beta_{1,n+1}}(\ft_{|\alpha|+n+1}),} which is precisely the desired completeness.
\end{proof}

\begin{theorem}
	\label{thm:H-class.preserves.tight}
	Suppose that the labelled space $\lspace$ is weakly left-resolving, that $\acf$ is closed under relative complements, and let $\alpha\in\awplus$ and $\beta\in\awleinf$ be such that $\alpha\beta\in\awleinf$. If $\xi\in\ftightw{\alpha\beta}$, then the filter $\eta\in\filtw{\beta}$ associated with the complete family $\{h_{[\alpha]\beta_{1,n}}(\xi_{n+|\alpha|})\}_{n}$ for $\beta$ is a tight filter.
\end{theorem}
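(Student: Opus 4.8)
The plan is to follow the pattern of the proof of Theorem~\ref{thm:G-class.preserves.tight}, using the maps $h_{[\alpha]\beta}$ in place of the maps $g_{(\alpha)\beta}$, and exploiting that the complete family associated with $\eta$ is, term by term, the image under these maps of the complete family associated with $\xi$. The structural facts I would rely on are: these maps carry ultrafilters to ultrafilters (Proposition~\ref{prop:h-class.definition.enabler}), preserve sinks (Lemma~\ref{lemma:h-class.properties2}.(\ref{item:h-class.preserves.sinks})), are continuous (Lemma~\ref{lemma:h-class.properties2}.(\ref{item:h-class.continuous})), and fit into the commutative square of Lemma~\ref{lemma:h.class.properties}.(\ref{item:f-class.and.h-class.compatible}) relating them to the maps $f_{\alpha[\beta]}$. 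Since $\alpha\in\awplus$, every index $n+|\alpha|$ occurring in the family $\{h_{[\alpha]\beta_{1,n}}(\xi_{n+|\alpha|})\}_n$ is at least $1$, which is what makes the boundary term $n=0$ (where $\beta_{1,0}=\eword$) behave well. I would then split into the cases $\beta\in\awinf$ and $\beta\in\awstar$, corresponding to $\xi$ being of infinite, respectively finite, type.

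Suppose first $\beta\in\awinf$, so $\alpha\beta\in\awinf$. Under the standing hypotheses a tight filter of infinite type is an ultrafilter (\cite{Boava2016}, Corollary~6.2; equivalently Theorems~\ref{thm:tight.filters.in.es}.(\ref{item:tight.filters.in.es.infinite.type}) and \ref{thm:ultrafilters}.(\ref{item:ultrafilters.infinite.type.special.case})'), so $\xi_m$ is an ultrafilter for every $m\geq 1$. As $n+|\alpha|\geq 1$ for all $n\geq 0$, Proposition~\ref{prop:h-class.definition.enabler} shows each $h_{[\alpha]\beta_{1,n}}(\xi_{n+|\alpha|})$ is an ultrafilter (in $\acfrg{\beta_{1,n}}$, or in $\acf$ when $n=0$); in particular the empty alternative for $\eta_0$ in Theorem~\ref{thm:ultrafilters}.(\ref{item:ultrafilters.infinite.type.special.case})' does not occur. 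Hence the complete family of $\eta$ consists entirely of ultrafilters, so $\eta$ is an ultrafilter of infinite type by Theorem~\ref{thm:ultrafilters}.(\ref{item:ultrafilters.infinite.type.special.case})', and therefore tight by Theorem~\ref{thm:tight.filters.in.es}.(\ref{item:tight.filters.in.es.infinite.type}).

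Suppose now $\beta\in\awstar$, so $\xi$ is of finite type with largest word $\alpha\beta$ and $\xi_{|\alpha\beta|}$ is an ultrafilter in $\acfrg{\alpha\beta}$ satisfying condition~(a) or~(b) of Proposition~\ref{prop:tight.filters.of.finite.type}. First, $\eta_{|\beta|}=h_{[\alpha]\beta}(\xi_{|\alpha\beta|})$ is an ultrafilter in $\acfrg{\beta}$ by Proposition~\ref{prop:h-class.definition.enabler}, so it only remains to produce the appropriate net. If~(a) holds, a net $\{\ft_\lambda\}_\lambda\subseteq X_{\alpha\beta}^{sink}$ converges to $\xi_{|\alpha\beta|}$; then $\{h_{[\alpha]\beta}(\ft_\lambda)\}_\lambda$ lies in $X_{\beta}^{sink}$ by Lemma~\ref{lemma:h-class.properties2}.(\ref{item:h-class.preserves.sinks}) and converges to $h_{[\alpha]\beta}(\xi_{|\alpha\beta|})=\eta_{|\beta|}$ by Lemma~\ref{lemma:h-class.properties2}.(\ref{item:h-class.continuous}), so $\eta$ meets condition~(a). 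If~(b) holds, a net $\{(t_\lambda,\ft_\lambda)\}_\lambda$ with $t_\lambda\in\alf$, $\ft_\lambda\in X_{\alpha\beta t_\lambda}$ and $\{t_\lambda\}_\lambda$ converging to infinity is such that $\{f_{\alpha\beta[t_\lambda]}(\ft_\lambda)\}_\lambda$ converges to $\xi_{|\alpha\beta|}$; I would set $\ft'_\lambda=h_{[\alpha]\beta t_\lambda}(\ft_\lambda)\in X_{\beta t_\lambda}$ (noting $\beta t_\lambda\in\awplus$, being a suffix of a representative of $\alpha\beta t_\lambda\in\awplus$), so that the commutative square of Lemma~\ref{lemma:h.class.properties}.(\ref{item:f-class.and.h-class.compatible}) with $\gamma=t_\lambda$ gives $f_{\beta[t_\lambda]}(\ft'_\lambda)=h_{[\alpha]\beta}(f_{\alpha\beta[t_\lambda]}(\ft_\lambda))$, which converges to $h_{[\alpha]\beta}(\xi_{|\alpha\beta|})=\eta_{|\beta|}$ by continuity; as $\{t_\lambda\}_\lambda$ still converges to infinity, $\eta$ meets condition~(b). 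In either case $\eta$ is tight by Proposition~\ref{prop:tight.filters.of.finite.type}.

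Most of the work is done by the results of Section~\ref{section:filter.surgery}, so I do not expect any deep difficulty. The points needing care are the boundary index $n=0$ (where one must observe that $|\alpha|\geq 1$ rules out the empty-$\xi_0$ alternative, and, in case~(b) with $\beta=\eword$, that the maps $f_{\eword[t_\lambda]}$ occurring here take genuine ultrafilter values) and, throughout case~(b), the bookkeeping of indices and of which words ($\beta t_\lambda$, $\alpha\beta t_\lambda$) lie in $\awplus$, so that Lemma~\ref{lemma:h.class.properties}.(\ref{item:f-class.and.h-class.compatible}) and Proposition~\ref{prop:tight.filters.of.finite.type} apply verbatim. This index bookkeeping is, I expect, the main — if minor — obstacle.
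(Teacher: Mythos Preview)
Your proposal is correct and follows essentially the same route as the paper's own proof: split into the infinite and finite cases, in the former use that the $h_{[\alpha]\beta_{1,n}}$ send ultrafilters to ultrafilters so that Theorem~\ref{thm:ultrafilters}.(\ref{item:ultrafilters.infinite.type.special.case})' applies, and in the latter push the net in condition~(a) or~(b) of Proposition~\ref{prop:tight.filters.of.finite.type} through $h_{[\alpha]\beta}$ (respectively $h_{[\alpha]\beta t_\lambda}$), using sink-preservation, continuity, and the commutative square of Lemma~\ref{lemma:h.class.properties}.(\ref{item:f-class.and.h-class.compatible}). Your attention to the $n=0$ and $\beta=\eword$ boundary issues is more explicit than the paper's, but the argument is otherwise the same.
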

\begin{proof}
	Again the proof is split into cases, using Theorem \ref{thm:tight.filters.in.es} and Proposition \ref{prop:tight.filters.of.finite.type}. Begin with the case $\beta\in\awinf$, so that $\xi$ is an ultrafilter of infinite type; Lemma \ref{prop:H-class.definition.enabler} says the family $\{h_{[\alpha]\beta_{1,n}}(\xi_{n+|\alpha|})\}_{n}$ of ultrafilters is indeed complete for $\beta$, hence the filter $\eta\in\filtw{\beta}$ associated with it is an ultrafilter and thus tight.
	
	Next, consider $\beta\in\awstar$, and suppose there exists a net $\{\ft_\lambda\}_\lambda\subseteq X_{\alpha\beta}^{sink}$ that converges to $\xi_{|\alpha\beta|}\in X_{\alpha\beta}$. The map $h_{[\alpha]\beta}$ is continuous and preserves sinks by Lemma \ref{lemma:h-class.properties2}, meaning that $\{h_{[\alpha]\beta}(\ft_\lambda)\}_\lambda$ is a net in $X_\beta^{sink}$ that converges to $h_{[\alpha]\beta}(\xi_{|\alpha\beta|})$, implying in turn that $\eta$ is tight.
	
	Finally, suppose that $\beta\in\awstar$ and that there exists a net $\{(t_\lambda,\ft_\lambda)\}_\lambda$ with $t_\lambda\in\alf$ and $\ft_\lambda\in X_{\alpha\beta t_\lambda}$ for all $\lambda$ such that $\{t_\lambda\}_\lambda$ converges to infinity in $\alf$ and $\{f_{\alpha\beta[t_\lambda]}(\ft_\lambda)\}_\lambda$ converges to $\xi_{|\alpha\beta|}$ in $X_{\alpha\beta}$. The commutativity of the diagram in Lemma \ref{lemma:h.class.properties}.(\ref{item:f-class.and.h-class.compatible}) ensures that		
	\[ h_{[\alpha]\beta}(f_{\alpha\beta[t_\lambda]}(\ft_\lambda))=f_{\beta[t_\lambda]}(h_{[\alpha]\beta t_\lambda}(\ft_\lambda)), \] and from here it can be seen, using the continuity of $h_{[\alpha]\beta}$, that the net $\{(t_\lambda,h_{[\alpha]\beta t_\lambda}(\ft_\lambda))\}_\lambda$ is such that $\{t_\lambda\}_\lambda$ converges to infinity in $\alf$ and 	 the net $\{f_{\beta[t_\lambda]}(h_{[\alpha]\beta t_\lambda}(\ft_\lambda))\}_\lambda$ in $X_\beta$ converges to $h_{[\alpha]\beta}(\xi_{|\alpha\beta|})$, whence $\eta$ is tight.
\end{proof}

Under the hypotheses of Theorem \ref{thm:H-class.preserves.tight} one can therefore define a function \[\newf{H_{[\alpha]\beta}}{\ftightw{\alpha\beta}}{\ftightw{(\alpha)\beta}}\] by $H_{[\alpha]\beta}(\xi^{\alpha\beta})=\eta^\beta$ where, for all $n$ with $0\leq n\leq |\beta|$, \[ \eta^\beta_n=h_{[\alpha]\beta_{1,n}}(\xi_{n+|\alpha|})\in X_{(\alpha)\beta_{1,n}}. \] For $\alpha=\eword$ define $H_{[\eword]\beta}$ to be the identity function over $\ftightw{\beta}$.

\begin{lemma}
	\label{lemma:H-class.composition.rule}
	Suppose the labelled space $\lspace$ is weakly left-resolving, that $\acf$ is closed under relative complements, and let $\alpha,\beta\in\awplus$ and $\gamma\in\awleinf$ with $\alpha\beta\gamma\in\awleinf$ be given. Then \[ H_{[\beta]\gamma}\circ H_{[\alpha]\beta\gamma}=H_{[\alpha\beta]\gamma}. \]
\end{lemma}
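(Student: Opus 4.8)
The plan is to verify the identity by applying both sides to an arbitrary tight filter $\xi\in\ftightw{\alpha\beta\gamma}$ and comparing the resulting complete families of ultrafilters for $\gamma$; by Theorem \ref{thm:filters.in.E(S)} a filter in $E(S)$ is completely determined by its associated word together with its complete family, and all three expressions involved have associated word $\gamma$, so it suffices to match the families entry by entry. The whole argument should come down to the level-by-level composition rule $h_{[\beta]\gamma}\circ h_{[\alpha]\beta\gamma}=h_{[\alpha\beta]\gamma}$ from Lemma \ref{lemma:h.class.properties}.(\ref{item:h-class.composition.rule}).

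First I would record that the composite $H_{[\beta]\gamma}\circ H_{[\alpha]\beta\gamma}$ is meaningful: $H_{[\alpha]\beta\gamma}(\xi)$ lies in $\ftightw{(\alpha)\beta\gamma}\subseteq\ftightw{\beta\gamma}$, so in particular it is a tight filter with associated word $\beta\gamma$ and $H_{[\beta]\gamma}$ may be applied to it. Writing $\{\xi_m\}_{0\le m\le|\alpha\beta\gamma|}$ for the complete family of $\xi$, the definition of $H_{[\alpha]\beta\gamma}$ and then of $H_{[\beta]\gamma}$ gives, for each $n$ with $0\le n\le|\gamma|$,
\[
\bigl(H_{[\beta]\gamma}\circ H_{[\alpha]\beta\gamma}(\xi)\bigr)_n
= h_{[\beta]\gamma_{1,n}}\Bigl(\bigl(H_{[\alpha]\beta\gamma}(\xi)\bigr)_{n+|\beta|}\Bigr)
= h_{[\beta]\gamma_{1,n}}\Bigl(h_{[\alpha](\beta\gamma)_{1,n+|\beta|}}\bigl(\xi_{n+|\beta|+|\alpha|}\bigr)\Bigr).
\]
Then I would invoke the elementary word identities $(\beta\gamma)_{1,n+|\beta|}=\beta\gamma_{1,n}$ and $n+|\beta|+|\alpha|=n+|\alpha\beta|$, together with Lemma \ref{lemma:h.class.properties}.(\ref{item:h-class.composition.rule}) applied with $\gamma_{1,n}$ in place of $\gamma$ (its hypotheses hold since $\alpha\beta\gamma_{1,n}$ is a beginning of $\alpha\beta\gamma\in\awleinf$, and the $\eword$ conventions cover $n=0$), to rewrite the right-hand side as $h_{[\alpha\beta]\gamma_{1,n}}(\xi_{n+|\alpha\beta|})$. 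This is exactly $\bigl(H_{[\alpha\beta]\gamma}(\xi)\bigr)_n$ by definition of $H_{[\alpha\beta]\gamma}$, so the two complete families coincide and hence so do the two filters.

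The computation is essentially routine once the conventions are in place; the only point demanding care is the index and subword bookkeeping — in particular keeping track of which Boolean algebra $\acfrg{\gamma_{1,n}}$ each filter lives in, and checking that every application of $h_{[\cdot]\cdot}$ and of Lemma \ref{lemma:h.class.properties} is legitimate for the relevant subwords of $\alpha\beta\gamma$, both when $\gamma\in\awstar$ and when $\gamma\in\awinf$. I would also note in passing that the common value lands in $\ftightw{(\alpha\beta)\gamma}$, consistent with the stated codomain of $H_{[\alpha\beta]\gamma}$, and that the cases $\gamma=\eword$ and $\gamma$ infinite require no separate treatment beyond restricting the range of $n$.
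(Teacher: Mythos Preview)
Your proof is correct and follows exactly the approach the paper intends: the paper's own proof is the single line ``Immediate from Lemma~\ref{lemma:h.class.properties}.(\ref{item:h-class.composition.rule}),'' and your argument is precisely the unpacking of that immediacy, comparing the complete families of $H_{[\beta]\gamma}\circ H_{[\alpha]\beta\gamma}(\xi)$ and $H_{[\alpha\beta]\gamma}(\xi)$ entry by entry via $h_{[\beta]\gamma_{1,n}}\circ h_{[\alpha]\beta\gamma_{1,n}}=h_{[\alpha\beta]\gamma_{1,n}}$.
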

\begin{proof}
	Immediate from Lemma \ref{lemma:h.class.properties}.(\ref{item:h-class.composition.rule}).
\end{proof}

\begin{theorem}
	\label{thm:H-class.G-class.inverses}
	Suppose the labelled space $\lspace$ is weakly left-resolving, that $\acf$ is closed under relative complements, and let $\alpha\in\awplus$ and $\beta\in\awleinf$ be such that $\alpha\beta\in\awleinf$. Then $H_{[\alpha]\beta}\circ G_{(\alpha)\beta}$ and $G_{(\alpha\beta)}\circ H_{[\alpha]\beta}$ are the identity maps over $\ftightw{(\alpha)\beta}$ and $\ftightw{\alpha\beta}$, respectively.
\end{theorem}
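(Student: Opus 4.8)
The plan is to pass from filters to their associated complete families of ultrafilters via Theorem~\ref{thm:filters.in.E(S)} and then invoke, level by level, that $g_{(\alpha)\gamma}$ and $h_{[\alpha]\gamma}$ are mutual inverses (Lemma~\ref{lemma:h-class.properties2}.(\ref{item:h-class.g-class.inverses})). Both $G_{(\alpha)\beta}$ and $H_{[\alpha]\beta}$ are already known to land in the correct tight spectra by Theorems~\ref{thm:G-class.preserves.tight} and~\ref{thm:H-class.preserves.tight}, so tightness is not an issue and the entire argument reduces to a comparison of complete families.

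First I would fix $\xi\in\ftightw{(\alpha)\beta}$, set $\eta=G_{(\alpha)\beta}(\xi)$ and $\zeta=H_{[\alpha]\beta}(\eta)$, and show $\zeta=\xi$. By construction of $G_{(\alpha)\beta}$ the complete family $\{\ftg{J}_i\}_i$ of $\eta$ satisfies $\ftg{J}_{|\alpha|+n}=g_{(\alpha)\beta_{1,n}}(\xi_n)$ for $n\geq1$, and $\ftg{J}_{|\alpha|}=g_{(\alpha)\eword}(\xi_0)$ — the latter holding by Remark~\ref{remark:G-class.ft0.enough.if.not.empty} when $\beta\neq\eword$ (here $\xi_0$ is a genuine ultrafilter, not $\emptyset$, because $\xi\in\ftightw{(\alpha)\beta}$ forces $\xi_0\in X_{(\alpha)\eword}$) and by definition when $\beta=\eword$. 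Applying $H_{[\alpha]\beta}$ and using $h_{[\alpha]\gamma}\circ g_{(\alpha)\gamma}=\mathrm{id}$ from Lemma~\ref{lemma:h-class.properties2}.(\ref{item:h-class.g-class.inverses}), the complete family of $\zeta$ is $\zeta_n=h_{[\alpha]\beta_{1,n}}(\ftg{J}_{|\alpha|+n})=\xi_n$ for every $n$ with $0\leq n\leq|\beta|$ (the case $n=0$ being the seam $\zeta_0=h_{[\alpha]\eword}(g_{(\alpha)\eword}(\xi_0))=\xi_0$). Since $\zeta$ and $\xi$ both lie in $\filtw{\beta}$ and carry the same complete family, Theorem~\ref{thm:filters.in.E(S)} gives $\zeta=\xi$; hence $H_{[\alpha]\beta}\circ G_{(\alpha)\beta}=\mathrm{id}_{\ftightw{(\alpha)\beta}}$.

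For the other composition I would fix $\xi\in\ftightw{\alpha\beta}$, set $\eta=H_{[\alpha]\beta}(\xi)$ — so $\eta_n=h_{[\alpha]\beta_{1,n}}(\xi_{n+|\alpha|})$ — and $\zeta=G_{(\alpha)\beta}(\eta)$, with complete family $\{\ftg{J}_i\}_i$ for $\alpha\beta$. Then $\ftg{J}_{|\alpha|+n}=g_{(\alpha)\beta_{1,n}}(\eta_n)=g_{(\alpha)\beta_{1,n}}\bigl(h_{[\alpha]\beta_{1,n}}(\xi_{n+|\alpha|})\bigr)=\xi_{n+|\alpha|}$ for $n\geq1$ by Lemma~\ref{lemma:h-class.properties2}.(\ref{item:h-class.g-class.inverses}), and likewise $\ftg{J}_{|\alpha|}=g_{(\alpha)\eword}(\eta_0)=g_{(\alpha)\eword}\bigl(h_{[\alpha]\eword}(\xi_{|\alpha|})\bigr)=\xi_{|\alpha|}$, using Remark~\ref{remark:G-class.ft0.enough.if.not.empty} and that $\eta_0=h_{[\alpha]\eword}(\xi_{|\alpha|})\in X_{(\alpha)\eword}$ is an ultrafilter. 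Thus $\{\ftg{J}_i\}_i$ agrees with the complete family of $\xi$ at level $|\alpha|$ and at all higher levels; since within a complete family each ultrafilter determines all the ones preceding it, agreement propagates to the levels $i<|\alpha|$ as well, so $\zeta=\xi$ by Theorem~\ref{thm:filters.in.E(S)}, that is, $G_{(\alpha)\beta}\circ H_{[\alpha]\beta}=\mathrm{id}_{\ftightw{\alpha\beta}}$.

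The only point requiring genuine care is the ``seam'' at level $|\alpha|$ together with the degenerate case $\beta=\eword$: one must be sure that the bottom filter of the glued, respectively cut, family is exactly the one produced by $g_{(\alpha)\eword}$ / $h_{[\alpha]\eword}$ (this is precisely the content of Remark~\ref{remark:G-class.ft0.enough.if.not.empty}), and that all the filters in play are honest ultrafilters rather than $\emptyset$, so that Lemma~\ref{lemma:h-class.properties2}.(\ref{item:h-class.g-class.inverses}) applies verbatim — which is guaranteed since membership in $\ftightw{(\alpha)\beta}$ forces $\xi_0\in X_{(\alpha)\eword}\subseteq X_{\eword}$ and $h_{[\alpha]\eword}$ always outputs an element of $X_{(\alpha)\eword}$. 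Everything else is routine unwinding of the definitions of $G_{(\alpha)\beta}$ and $H_{[\alpha]\beta}$.
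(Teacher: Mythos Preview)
Your proof is correct and follows essentially the same approach as the paper: reduce to the complete families and apply Lemma~\ref{lemma:h-class.properties2}.(\ref{item:h-class.g-class.inverses}) level by level. The only difference is cosmetic: the paper checks the identity $\eta_n=\xi_n$ (respectively $\sigma_{n+|\alpha|}=\xi_{n+|\alpha|}$) only for $n\geq1$ and then invokes completeness to propagate downward, whereas you also handle the seam $n=0$ explicitly via Remark~\ref{remark:G-class.ft0.enough.if.not.empty}; both are fine, and your observation that $\xi_0\in X_{(\alpha)\eword}$ is automatically a genuine ultrafilter for $\xi\in\ftightw{(\alpha)\beta}$ is a nice clarification.
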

\begin{proof}
	We consider the case $\beta\neq\eword$, as the case $\beta=\eword$ is analogous. Let $\xi\in\ftightw{(\alpha)\beta}$ be arbitrary, and denote $\sigma^{\alpha\beta}=G_{(\alpha)\beta}(\xi)$, $\eta^\beta=H_{[\alpha]\beta}(\sigma)$. For any given integer $n$ with $1\leq n\leq |\beta|$ (note that $\xi_0$ may be empty), \[ \eta_n=h_{[\alpha]\beta_{1,n}}(\sigma_{n+|\alpha|})=h_{[\alpha]\beta_{1,n}}(g_{(\alpha)\beta_{1,n}}(\xi_n))=\xi_n, \] as a consequence of Lemma \ref{lemma:h-class.properties2}.(\ref{item:h-class.g-class.inverses}), giving immediately $\eta=\xi$, from where it can be concluded that $H_{[\alpha]\beta}\circ G_{(\alpha)\beta}$ is the identity map over $\ftightw{(\alpha)\beta}$.
	
	On the other hand, begin with an arbitrary $\xi\in\ftightw{\alpha\beta}$ and let $\eta^\beta=H_{[\alpha]\beta}(\xi)$, $\sigma^{\alpha\beta}=G_{(\alpha)\beta}(\eta)$. If $n$ is an integer with $1\leq n\leq |\beta|$ then \[ \sigma_{n+|\alpha|}=g_{(\alpha)\beta_{1,n}}(\eta_n)=g_{(\alpha)\beta_{1,n}}(h_{[\alpha]\beta_{1,n}}(\xi_{n+|\alpha|}))=\xi_{n+|\alpha|}, \] again from Lemma \ref{lemma:h-class.properties2}.(\ref{item:h-class.g-class.inverses}), giving $\sigma_{n+|\alpha|}=\xi_{n+|\alpha|}$ and thus $\sigma=\xi$ (for $\sigma_{1+|\alpha|}=\xi_{1+|\alpha|}$ implies $\sigma_i=\xi_i$ for all $i\leq |\alpha|$), that is, $G_{(\alpha\beta)}\circ H_{[\alpha]\beta}$ is the identity map over $\ftightw{\alpha\beta}$.
\end{proof}

\section{Representations}
\label{section:representations}

For this section, let $\lspace$ be a weakly left-resolving labelled space such that $\acf$ is closed under relative complements, and let $S$ be its inverse semigroup, as in Subsection \ref{subsection:inverse.semigroup}. Our goal is to build a representation of $C^*\lspace$ that shows that an element $s_{\alpha}p_A s_{\beta}^*\in C^*\lspace$ is non-zero whenever $(\alpha,A,\beta)\in S\setminus\{0\}$. To achieve this goal, we make use of the functions $G$ and $H$ defined in Section \ref{section:filter.surgery}.

Let $\hs=\ell^2(\ftight)$. Throughout this section we make an abuse of notation in that $\xi$ represents both an element of $\ftight$ and of $\hs$. For each $A\in\acf$, define
\[\hs_A=\overline{\text{span}}\left\{\xi\in\ftight\,|\,(\eword,A,\eword)\in\xi\right\}=\overline{\text{span}}\left\{\xi\in\ftight\,|\,A\in\xi_0\right\}.\]
Also, for $a\in\alf$, define
\[\hs_a=\overline{\text{span}}\left\{\xia\in\ftight\,|\,\alpha_1=a\right\}.\]

For a given $A\in\acf$, let $P_A\in \la{B}(\hs)$ be the orthogonal projection onto $\hs_A$; additionally, for $a\in\alf$, we can use Theorem \ref{thm:H-class.G-class.inverses} to define a partial isometry $S_a\in\la{B}(\hs)$ with initial space $\hs_{r(a)}$ and final space $\hs_a$, given by $S_a(\xi^{\beta})=G_{(a)\beta}(\xi)$, for all $\xi=\xi^{\beta}\in\hs_{r(a)}$. It follows that $S_a^*$ is given by $S_a^*(\xi^{a\beta})=H_{[a]\beta}(\xi)$, for all $\xi=\xi^{a\beta}\in\hs_a$. If $\alpha=\alpha_1\cdots\alpha_n\in\awplus$, set $S_{\alpha}=S_{\alpha_1}\cdots S_{\alpha_n}$.

\begin{proposition}
	The family $\{P_A,S_a\}$ satisfies the relations defining $C^*\lspace$ in Definition \ref{def:c*-algebra.labelled.space.v3}.
\end{proposition}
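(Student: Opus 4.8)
The plan is to verify the four families of relations of Definition~\ref{def:c*-algebra.labelled.space.v3} one at a time. Two structural facts drive everything. First, in the orthonormal basis $\{\xi\}_{\xi\in\ftight}$ of $\hs$, each $P_A$ is the diagonal projection whose $\xi$-entry equals $1$ when $A\in\xi_0$ and $0$ otherwise. Second, Theorem~\ref{thm:H-class.G-class.inverses} guarantees that $S_a$ is a genuine partial isometry with initial space $\hs_{r(a)}$ and final space $\hs_a$: it sends a basis vector $\xi=\xi^\beta$ with $r(a)\in\xi_0$ to the basis vector $G_{(a)\beta}(\xi)$, whose associated labelled path begins with $a$, and $S_a^*$ restricted to $\hs_a$ acts as $H_{[a]\beta}$.

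Relations (i) and (iii) are quick. For (i), $P_AP_B$ is diagonal with $\xi$-entry $1$ exactly when both $A\in\xi_0$ and $B\in\xi_0$, i.e.\ (since $\xi_0$ is a filter) when $A\cap B\in\xi_0$, so $P_AP_B=P_{A\cap B}$; no filter contains $\emptyset$, so $P_\emptyset=0$; and the union relation follows from the fact that for a tight filter $\xi$ the set $\xi_0$ is empty or an ultrafilter in the generalized Boolean algebra $\acf$ (by Theorems~\ref{thm:ultrafilters} and~\ref{thm:tight.filters.in.es} together with $\xi_0=f_{\eword[\alpha]}(\xi_{|\alpha|})$), hence a prime filter, so that $A\cup B\in\xi_0$ iff $A\in\xi_0$ or $B\in\xi_0$, giving $P_{A\cup B}=P_A+P_B-P_{A\cap B}$ entrywise. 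For (iii), $S_a^*S_a$ is the projection onto the initial space $\hs_{r(a)}=\mathrm{ran}\,P_{r(a)}$; and for $b\neq a$ the operator $S_a$ maps into $\hs_a$ while $S_b^*$ has initial space $\hs_b$, which is orthogonal to $\hs_a$ because the associated labelled paths begin with different letters, so $S_b^*S_a=0$.

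For (ii), I would evaluate $P_AS_a$ and $S_aP_{r(A,a)}$ on a basis vector $\xi$. If $r(a)\notin\xi_0$ then also $r(A,a)\notin\xi_0$, since $r(A,a)\subseteq r(a)$ and $\xi_0$ is a filter, and both sides are $0$. Otherwise write $\eta=S_a\xi$; the crux is the equivalence $A\in\eta_0 \Leftrightarrow r(A,a)\in\xi_0$. By the construction of $G_{(a)\beta}$ together with Remark~\ref{remark:G-class.ft0.enough.if.not.empty} one has $\eta_0=\{A'\in\acf \mid r(A',a)\in\ftg{J}_1\}$ with $\ftg{J}_1=g_{(a)\eword}(\xi_0)=\{C\cap r(a)\mid C\in\xi_0\}$, and since $r(A,a)=r(A,a)\cap r(a)$ and $\usetr{\ftg{J}_1}{\acf}=\xi_0$ by Lemma~\ref{lemma:h-class.properties2}.(\ref{item:h-class.g-class.inverses}), the equivalence follows. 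Hence both $P_AS_a\xi$ and $S_aP_{r(A,a)}\xi$ equal $\eta$ when $r(A,a)\in\xi_0$ and equal $0$ otherwise, proving (ii).

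Relation (iv) is where the real work is, and I expect it to be the main obstacle. Since $r(A,a)\subseteq r(a)$ we have $P_{r(A,a)}\leq S_a^*S_a$, so $S_aP_{r(A,a)}S_a^*$ is the orthogonal projection onto $S_a(\hs_{r(A,a)})$, which by the computation for (ii) is $\overline{\mathrm{span}}\{\eta\in\ftight\mid A\in\eta_0\text{ and the word of }\eta\text{ begins with }a\}$. As $a$ ranges over the finite set $\lbf(A\dgraph^1)$ these ranges are pairwise orthogonal and contained in $\hs_A$, so $\sum_{a\in\lbf(A\dgraph^1)}S_aP_{r(A,a)}S_a^*$ is the projection onto the closed span of those $\eta$ with $A\in\eta_0$ whose word begins with a letter of $\lbf(A\dgraph^1)$. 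It therefore suffices to show that, under the hypotheses on $A$, \emph{every} $\eta\in\ftight$ with $A\in\eta_0$ has non-empty word whose first letter lies in $\lbf(A\dgraph^1)$; then this set of $\eta$'s is exactly the index set of the basis of $\hs_A$ and we conclude $P_A=\sum_{a\in\lbf(A\dgraph^1)}S_aP_{r(A,a)}S_a^*$. To prove the claim: if $\eta$ had empty word it would be a tight filter of finite type with largest word $\eword$, so by Theorem~\ref{thm:tight.filters.in.es}.(\ref{item:tight.filters.in.es.finite.type}) the element $A$ of the ultrafilter $\eta_0$ would satisfy either $\card{\lbf(A\dgraph^1)}=\infty$ or $\emptyset\neq B\subseteq A\cap\dgraph^0_{sink}$ for some $B\in\acf$, both excluded by hypothesis; hence the word of $\eta$ is non-empty. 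If $a$ is its first letter, completeness of $\{\eta_n\}$ gives $r(A,a)\in\eta_1$, and $\eta_1$ is an ultrafilter in $\acfrg{a}$, so $r(A,a)\neq\emptyset$, i.e.\ $a\in\lbf(A\dgraph^1)$. This last implication — precisely where the form of condition~(iv) and the tight-filter classification of \cite{Boava2016} are used — is the delicate point of the argument.
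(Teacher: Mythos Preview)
Your proof is correct and follows essentially the same route as the paper's: relations (i) and (iii) via the filter/ultrafilter structure of $\xi_0$, relation (ii) via Remark~\ref{remark:G-class.ft0.enough.if.not.empty} to identify $\eta_1=g_{(a)\eword}(\xi_0)$, and relation (iv) by using Theorem~\ref{thm:tight.filters.in.es} to rule out the empty word and completeness to force the first letter into $\lbf(A\dgraph^1)$. You are slightly more careful than the paper in two places---explicitly treating the case $\xi_0=\emptyset$ for the union relation, and giving an explicit description of the range of $S_aP_{r(A,a)}S_a^*$ rather than reducing to $\hs_A\subseteq\bigoplus_a\hs_a$---but these are cosmetic differences, not alternative approaches.
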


\begin{proof}
	Note that $\hs_{\emptyset}=\{0\}$ since $(\eword,\emptyset,\eword)\notin S$ and therefore $(\eword,\emptyset,\eword)\notin \xi$ for all $\xi\in\ftight$. It follows that $P_{\emptyset}=0$.
	
	Now, for $A,B\in\acf$ and $\xi\in\ftight$
	\[P_AP_B(\xi)=[A\in\xi_0\wedge B\in\xi_0]\xi=[A\cap B\in\xi_0]\xi=P_{A\cap B}(\xi)\]
	where $[\ ]$ represents the boolean function that returns $1$ if the argument is true and $0$ otherwise; and the second equality follows from $\xi_0$ being a filter. Also, using that $\xi_0$ is an ultrafilter and therefore a prime filter, we have
	\dmal{P_{A\cup B}(\xi) & =[A\cup B\in\xi_0]\xi \\ &=([A\in\xi_0]+[B\in\xi_0]-[A\cap B\in\xi_0])\xi \\ &=(P_A+P_B-P_{A\cap B})(\xi).}
	
	Let $a,b\in\alf$. It is easy to see that $S_a^*S_a=P_{r(a)}$ and $S_a^*S_b=0$ if $a\neq b$. We check that $P_AS_a=S_aP_{r(A,a)}$. On one hand
	\dmal{P_AS_a(\xi^{\beta}) & =P_A([r(a)\in\xi_0]G_{(a)\beta}(\xi^{\beta})) \\ & =[A\in G_{(a)\beta}(\xi^{\beta})_0\wedge r(a)\in\xi_0]G_{(a)\beta}(\xi^{\beta}).}
	On the other hand
	\[S_aP_{r(A,a)}(\xi^{\beta})=S_a([r(A,a)\in\xi_0]\xi^\beta)=[r(A,a)\in\xi_0]G_{(a)\beta}(\xi^{\beta})\]
	where the last equality makes sense since $r(A,a)\subseteq r(a)$. If $r(a)\notin \xi_0$ then $r(A,a)\notin \xi_0$, so that both expressions above are zero. Suppose then that $r(a)\in\xi_0$. From  Remark \ref{remark:G-class.ft0.enough.if.not.empty} we have that $\left(G_{(a)\beta}(\xi^{\beta})\right)_1=g_{(a)\eword}(\xi_0)$; hence,
	\[A\in \left(G_{(a)\beta}(\xi^{\beta})\right)_0\Leftrightarrow r(A,a)\in \left(G_{(a)\beta}(\xi^{\beta})\right)_1\Leftrightarrow \exists C\in\xi_0:r(A,a)=C\cap r(a),\]
	so that
	\[[r(A,a)\in\xi_0]=[A\in G_{(a)\beta}(\xi^{\beta})_0\wedge r(a)\in\xi_0]\]
	and $P_AS_a=S_aP_{r(A,a)}$.
	
	For the last relation, let $A\in\acf$ be such that $0<\card{\lbf(A\dgraph^1)}<\infty$, and such that there is no $C\in\acf$ such that $\emptyset\neq C\subseteq A\cap \dgraph^0_{sink}$. We need to verify that
	\[P_A=\sum_{a\in\lbf(A\dgraph^1)}S_aP_{r(A,a)}S_a^*=P_A\sum_{a\in\lbf(A\dgraph^1)}S_aS_a^*.\]
	Since $\hs_a$ and $\hs_b$ are orthogonal if $a\neq b$, the rightmost sum above is a sum of orthogonal projections and therefore a projection itself. It is then sufficient to show that $H_A\subseteq\bigoplus_{a\in\lbf(A\dgraph^1)} H_a$. Suppose that $\xi^\beta\in H_A$, that is, $\xi^{\beta}\in\ftight$ is such that $(\eword,A,\eword)\in\xi$. That $\beta\neq\eword$ follows from the above condition on $A$ and Theorem \ref{thm:tight.filters.in.es}. Now, $(\eword,A,\eword)\in\xi$ is equivalent to $A\in\xi_0$, and in this case $\beta_1\in\lbf(A\dgraph^1)$ because $r(A,\beta_1)\in\xi_1$ and $\xi_1$ is a filter. Therefore, $\xi^\beta\in\bigoplus_{a\in\lbf(A\dgraph^1)} H_a$.
\end{proof}

\begin{proposition}
	If $(\alpha,A,\beta)\in S\setminus\{0\}$, then $S_{\alpha}P_AS_{\beta}^*\neq 0$.
\end{proposition}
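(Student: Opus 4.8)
The plan is to exhibit a single basis vector of $\hs=\ell^2(\ftight)$ that the operator $S_\alpha P_A S_\beta^*$ does not annihilate. Since $(\alpha,A,\beta)\in S\setminus\{0\}$ forces $A\in\acfrg{\alpha}\cap\acfrg{\beta}$ with $A\neq\emptyset$, the idempotent $(\beta,A,\beta)$ is a non-zero element of $E(S)$, and its principal filter $\uset{(\beta,A,\beta)}$ is then a (non-zero) filter in $E(S)$, which by Zorn's lemma I extend to an ultrafilter $\xi$. Ultrafilters are tight, so $\xi\in\ftight$. Because $(\beta,A,\beta)\in\xi$ and the words of any two elements of a filter are comparable, $\beta$ is a beginning of the labelled path of $\xi$; write that path as $\beta\gamma$ with $\gamma\in\awleinf$, and note that $A\in\xi_{|\beta|}$ directly from the description of $\xi_{|\beta|}$ recalled just after Theorem \ref{thm:filters.in.E(S)}. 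This $\xi$ is the vector I would feed into the operator.

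Next I would push $\xi$ through the three factors in turn. Applying $S_\beta^*=S_{\beta_n}^*\cdots S_{\beta_1}^*$ and iterating the defining formula $S_a^*(\zeta^{a\delta})=H_{[a]\delta}(\zeta)$ together with the composition rule of Lemma \ref{lemma:H-class.composition.rule}, one gets $S_\beta^*\xi = H_{[\beta]\gamma}(\xi)=:\eta\in\ftightw{(\beta)\gamma}$ (reading $S_\eword^*=\mathrm{id}$ when $\beta=\eword$); by the defining formula for the $h$-class maps, its zeroth filter is $\eta_0 = h_{[\beta]\eword}(\xi_{|\beta|}) = \usetr{\xi_{|\beta|}}{\acf}$, so $A\in\eta_0$ since $A\in\xi_{|\beta|}$. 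Hence $P_A\eta = \eta\neq 0$. Finally, $A\in\acfrg{\alpha}$ gives $A\subseteq r(\alpha)$, and $\eta_0$ being a filter containing $A$ forces $r(\alpha)\in\eta_0$, i.e. $\eta\in\ftightw{(\alpha)\gamma}$; iterating $S_a(\zeta^\delta)=G_{(a)\delta}(\zeta)$ and using the composition and domain-containment statements of Lemma \ref{lemma:G-class.properties} yields $S_\alpha\eta = G_{(\alpha)\gamma}(\eta)\in\ftight$. Therefore $S_\alpha P_A S_\beta^*\,\xi = G_{(\alpha)\gamma}(\eta)$ is a basis vector of $\hs$, in particular non-zero, which proves the proposition.

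I do not expect a genuinely hard step here; the only real care is bookkeeping. The two points to get right are: that the ultrafilter extending $\uset{(\beta,A,\beta)}$ truly has a labelled path beginning with $\beta$ and with $A$ lying in the corresponding Boolean-algebra filter $\xi_{|\beta|}$; and that $\xi$ remains in the domain of each successive factor, which reduces precisely to the two membership checks $A\in\eta_0$ and $r(\alpha)\in\eta_0$ made above. If anything is the main obstacle, it is identifying how $S_\beta^*$ and $S_\alpha$ — a priori only products of single-letter partial isometries — act on the relevant basis vectors, namely as $H_{[\beta]\gamma}$ and $G_{(\alpha)\gamma}$; but this is exactly what the composition rules of Lemmas \ref{lemma:H-class.composition.rule} and \ref{lemma:G-class.properties}, together with the mutual-inverse statement of Theorem \ref{thm:H-class.G-class.inverses}, are designed to supply.
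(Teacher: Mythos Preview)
Your proof is correct and follows essentially the same approach as the paper's. The only cosmetic difference is the order of construction of the test vector: the paper first builds the ultrafilter $\eta^{\gamma}$ extending $\uset{(\eword,A,\eword)}$ and then sets $\xi=G_{(\beta)\gamma}(\eta)$, whereas you build $\xi$ directly as an ultrafilter extending $\uset{(\beta,A,\beta)}$ and obtain $\eta=H_{[\beta]\gamma}(\xi)$; the resulting computation $S_\alpha P_A S_\beta^*(\xi)=G_{(\alpha)\gamma}(\eta)\neq 0$ is identical.
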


\begin{proof}
	Observe that if $\alpha\in\awplus$ then for $\xi\in\ftight$
	\[S_{\alpha}(\xi^{\beta})=S_{\alpha_1}\cdots S_{\alpha_{|\alpha|}}(\xi^{\beta})=[r(\alpha)\in\xi_0]G_{(\alpha)\beta}(\xi)\]
	and
	\[S_{\alpha}^*=H_{[\alpha]\beta}(\xi).\]
	The above equalities are also true for $\alpha=\eword$ if we define $S_{\eword}=\mbox{Id}_{\hs}$.
	
	Let $(\alpha,A,\beta)\in S$ be given so that $\emptyset\neq A\subseteq r(\alpha)\cap r(\beta)$. Let us verify that $S_{\alpha}P_AS_{\beta}^*\neq 0$. Since $A\neq 0$, the set $\widetilde{\eta}=\{(\eword,C,\eword)\,|\,C\in\usetr{A}{\acf}\}$ is a filter in $E(S)$ and so it is contained in a ultrafilter $\eta^{\gamma}$ in $E(S)$, which is also an element of $\ftight$. From $A\subseteq r(\beta)$, it follows that $ \xi^{\beta\gamma}=G_{(\beta)\gamma}(\eta^{\gamma})$ is well defined. Then $\eta=H_{[\beta]\gamma}(\xi)$ and $A\in H_{[\beta]\gamma}(\xi)_0$. It follows that
	\dmal{S_{\alpha}P_AS_{\beta}^*(\xi) &=S_{\alpha}P_A(H_{[\beta]\gamma}(\xi))\\
		&=S_{\alpha}(\eta)\\
		&=G_{(\alpha)\gamma}(\eta)\neq 0}
	where the third equality follows from $A\subseteq r(\alpha)$ so that $G_{(\alpha)\gamma}(\eta)$ is a well defined element of $\ftight$.
\end{proof}

Joining the results from this section we obtain the following:

\begin{theorem}\label{thm:representation.general.case}
	Let $\lspace$ be a weakly left-resolving labelled space whose accommodating family $\acf$ is closed under relative complements. There exists a  representation of $C^*\lspace$ such that the image of $s_{\alpha}p_As_{\beta}^*$ is not zero for all $(\alpha,A,\beta)\in S\setminus\{0\}$.
\end{theorem}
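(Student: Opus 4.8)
The plan is to assemble the two preceding propositions via the universal property of $C^*\lspace$. First I would invoke Definition \ref{def:c*-algebra.labelled.space.v3}: since $C^*\lspace$ is the \emph{universal} C*-algebra generated by projections $\{p_A\}_{A\in\acf}$ and partial isometries $\{s_a\}_{a\in\alf}$ subject to relations (i)--(iv), and since the first proposition of this section shows that the concrete family $\{P_A,S_a\}\subseteq\la{B}(\hs)$ (with $\hs=\ell^2(\ftight)$) satisfies exactly those relations, there is a (unique) $*$-homomorphism $\pi\colon C^*\lspace\to\la{B}(\hs)$ with $\pi(p_A)=P_A$ for all $A\in\acf$ and $\pi(s_a)=S_a$ for all $a\in\alf$. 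This $\pi$ is the representation claimed in the theorem.

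Next I would unwind what $\pi$ does to the spanning elements of Proposition \ref{prop:properties.of.c*-labelled.space}.(viii). Since $\pi$ is multiplicative and $*$-preserving, for $\alpha=\alpha_1\cdots\alpha_m\in\awplus$ we get $\pi(s_\alpha)=\pi(s_{\alpha_1})\cdots\pi(s_{\alpha_m})=S_{\alpha_1}\cdots S_{\alpha_m}=S_\alpha$, and similarly $\pi(s_\beta^*)=S_\beta^*$; together with $\pi(p_A)=P_A$ this yields $\pi(s_\alpha p_A s_\beta^*)=S_\alpha P_A S_\beta^*$ for all $\alpha,\beta\in\awstar$ and $A\in\acfrg{\alpha}\cap\acfrg{\beta}$ (the case $\alpha=\eword$ or $\beta=\eword$ being handled by the convention $S_{\eword}=\mathrm{Id}_{\hs}$). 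Now I would simply apply the second proposition of this section: for every $(\alpha,A,\beta)\in S\setminus\{0\}$ one has $\emptyset\neq A\subseteq r(\alpha)\cap r(\beta)$ with $A\in\acfrg{\alpha}\cap\acfrg{\beta}$, hence $\pi(s_\alpha p_A s_\beta^*)=S_\alpha P_A S_\beta^*\neq 0$, which is precisely the assertion of the theorem.

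In this formulation the proof is short because the real work has already been carried out in the two propositions above. The genuinely substantive step is the non-vanishing: given $(\alpha,A,\beta)\in S\setminus\{0\}$ one must exhibit at least one tight filter on which $S_\alpha P_A S_\beta^*$ acts nontrivially. The construction does this by first enlarging $\uset{A}$ (restricted to $\acf$) to an ultrafilter $\eta^\gamma$ in $E(S)$ — which is tight, being an ultrafilter — then gluing $\beta$ onto it using $G_{(\beta)\gamma}$ (legitimate because $A\subseteq r(\beta)$ guarantees $\eta^\gamma\in\ftightw{(\beta)\gamma}$, by Theorem \ref{thm:G-class.preserves.tight}), applying $S_\beta^*=H_{[\beta]\gamma}$ to return to $\eta$, recording that $A\in\eta_0$, and finally gluing $\alpha$ via $G_{(\alpha)\gamma}$ (legitimate because $A\subseteq r(\alpha)$); the invertibility of $G$ and $H$ established in Theorem \ref{thm:H-class.G-class.inverses} is what keeps the composite from collapsing to zero. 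So the only delicate points are the well-definedness of these gluing/cutting operations on \emph{tight} filters — exactly the content of Section \ref{section:filter.surgery} — and these are already available. The remaining verification, that $\{P_A,S_a\}$ really satisfies relation (iv) of Definition \ref{def:c*-algebra.labelled.space.v3}, hinges on the description of tight filters of finite type in Theorem \ref{thm:tight.filters.in.es}, and has likewise been dispatched above; no further obstacle remains.
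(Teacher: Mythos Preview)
Your proposal is correct and follows exactly the paper's approach: the theorem is stated immediately after the two propositions with the one-line justification ``Joining the results from this section we obtain the following,'' which is precisely your argument of invoking the universal property (using the first proposition) and then applying the second proposition for non-vanishing. Your additional commentary unpacking the mechanics of the two propositions is accurate but goes beyond what the paper records for this theorem itself.
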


\section{The diagonal C*-subalgebra}
\label{section:diagonal.c*-algebra}

Let $\lspace$ be a weakly left-resolving labelled space such that $\acf$ is closed under relative complements, and let $\Delta:=\Delta\lspace$ be the diagonal C*-subalgebra of $C^*\lspace$, as in definitions \ref{def:diagonal.subalgebra} and \ref{def:c*-algebra.labelled.space.v3}. The main goal of this section is to show that the spectrum of this C*-subalgebra, $\widehat{\Delta}$, is homeomorphic to $\ftight$.

\begin{proposition}\label{prop:filter.from.spectre}
	For each $\varphi\in\widehat{\Delta}$, the set \[\xi=\{(\alpha,A,\alpha)\in E(S)\,|\,\varphi(s_{\alpha}p_As_{\alpha}^*)=1\}\] is a tight filter. In particular, the map $\Phi:\widehat{\Delta}\to\ftight$ given by
	\[\Phi(\varphi)=\{(\alpha,A,\alpha)\in E(S)\,|\,\varphi(s_{\alpha}p_As_{\alpha}^*)=1\}\]
	is well defined.
\end{proposition}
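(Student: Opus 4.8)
The plan is to verify the three filter axioms for $\xi$, then upgrade to tightness. Let $\varphi\in\widehat{\Delta}$ be a character of the commutative C*-algebra $\Delta$. Since $\Delta=\overline{\mathrm{span}}\{s_\alpha p_A s_\alpha^*\}$ is generated by the commuting projections $e_{(\alpha,A,\alpha)}:=s_\alpha p_A s_\alpha^*$, the character $\varphi$ is determined by its values on these projections, and $\varphi(e_p)\in\{0,1\}$ for each $p\in E(S)\setminus\{0\}$. The multiplication rule in Proposition \ref{prop:properties.of.c*-labelled.space}.(ix) shows that $p\mapsto e_p$ is a (zero-preserving) semilattice homomorphism from $E(S)$ onto the set of generating projections: $e_p e_q = e_{pq}$ whenever $pq\neq 0$ in $E(S)$, and $e_p e_q$ is again one of the generators (or we must check it is nonzero exactly when $pq\neq0$ — see the obstacle below). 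Granting this, define $\phi\colon E(S)\to\{0,1\}$ by $\phi(p)=\varphi(e_p)$ for $p\neq0$ and $\phi(0)=0$. First I would check $\phi$ is a character of the semilattice $E(S)$: $\phi(0)=0$ by $p_\emptyset=0$; and $\phi(p\wedge q)=\phi(p)\phi(q)$ follows from $e_{p\wedge q}=e_p e_q$ when $pq\neq0$, while if $pq=0$ one needs $\varphi(e_p e_q)=0$, i.e.\ $e_p e_q=0$ in $C^*\lspace$. Then $\xi=\xi_\phi=\{p\in E(S)\mid\phi(p)=1\}$ is, by the correspondence recalled at the end of Subsection \ref{subsection:filters.and.characters}, a filter in $E(S)$.

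Next I would show $\phi$ is a \emph{tight} character of $E(S)$, so that $\xi$ is a tight filter. Fix $p=(\alpha,A,\alpha)\in E(S)$ and a finite cover $Z\subseteq\lset{p}$ for $p$; I must show $\bigvee_{z\in Z}\phi(z)=\phi(p)$. The inequality $\phi(z)\leq\phi(p)$ for $z\leq p$ is immediate from $e_z=e_z e_p$, so $\bigvee_{z\in Z}\phi(z)\leq\phi(p)$. For the reverse, the key point is that a finite cover in $E(S)$ is transported by $p\mapsto e_p$ to a relation among projections in $C^*\lspace$ that forces $e_p=\bigvee_{z\in Z}e_z$ (the supremum of commuting projections), using the defining relations of $C^*\lspace$ — in particular relation (iv) of Definition \ref{def:c*-algebra.labelled.space.v3}, which is exactly the ``cover'' relation $p_A=\sum_{a}s_a p_{r(A,a)}s_a^*$ corresponding to the canonical finite cover $\{(a,r(A,a),a)\}$ of $(\eword,A,\eword)$ when $\lbf(A\dgraph^1)$ is finite and $A$ has no sink part. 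Applying $\varphi$ to $e_p=\bigvee_{z\in Z}e_z$ and using that $\varphi$ is a character (hence $\varphi$ of a finite join of commuting projections is the join of the values) yields $\phi(p)=\bigvee_{z\in Z}\phi(z)$. I would either cite the general fact (essentially Exel's: characters of $E(S)$ coming from characters of a C*-algebra in which $E(S)$ sits and where cover relations hold are automatically tight) or reprove it in this concrete setting by reducing an arbitrary finite cover of $(\alpha,A,\alpha)$ to the basic covers encoded by relation (iv) together with the Boolean relations (i).

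Finally, $\Phi$ is well defined because $\Phi(\varphi)=\xi$ is a tight filter by the above, i.e.\ $\Phi(\varphi)\in\ftight$, and the assignment $\varphi\mapsto\xi$ is unambiguous since $\xi$ is literally read off from the values $\varphi(s_\alpha p_A s_\alpha^*)$. I expect the main obstacle to be the verification that $e_p e_q=0$ in $C^*\lspace$ \emph{precisely} when $pq=0$ in $E(S)$ — the ``only if'' direction is the delicate one, since distinct idempotents of $E(S)$ can collapse to the same projection (as the authors warn, e.g.\ $s_{a^n}p_{\dgraph^0}s_{a^n}^*=1$ in their Example). Here is where the representation constructed in Section \ref{section:representations} (Theorem \ref{thm:representation.general.case}) is essential: it shows $s_\alpha p_A s_\beta^*\neq0$ whenever $(\alpha,A,\beta)\in S\setminus\{0\}$, and more to the point one checks, using the product formula of Proposition \ref{prop:properties.of.c*-labelled.space}.(ix), that if $pq=0$ then the product $e_p e_q$ is a linear combination forced to vanish — or, cleanly, that $e_p e_q = e_{pq}$ holds \emph{including} the case $pq=0$ (interpreting $e_0=0$), which is exactly what relation structure plus faithfulness of the Section 5 representation on elements of $R$ gives. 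Once that identity is secured, the rest is the routine filter/character bookkeeping sketched above.
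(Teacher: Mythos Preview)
Your filter argument is fine and matches the paper, but your detour through ``$e_pe_q=0$ precisely when $pq=0$'' is unnecessary: only the forward implication is needed, and it is immediate from Proposition~\ref{prop:properties.of.c*-labelled.space}(ix) together with $p_\emptyset=0$ (if $\alpha,\beta$ are not comparable the product is zero outright; if $\beta=\alpha\beta'$ with $r(A,\beta')\cap B=\emptyset$ then the product is $s_\beta p_\emptyset s_\beta^*=0$). Theorem~\ref{thm:representation.general.case} plays no role here.

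The gap is in your tightness argument. You assert that every finite cover $Z$ of $p$ in $E(S)$ yields $e_p=\bigvee_{z\in Z}e_z$ in $C^*\lspace$, and propose to ``reduce an arbitrary finite cover to the basic covers encoded by relation (iv) together with the Boolean relations (i).'' That reduction is the entire content of the proposition and you do not supply it; covers in $E(S)$ can mix Boolean refinements in several $\acfrg{\alpha_{1,n}}$ with edge-extensions at different depths, and there is no obvious inductive mechanism to collapse a general cover to the two ``basic'' types. Proving that $p\mapsto e_p$ is a tight representation in Exel's sense is essentially equivalent to identifying $C^*\lspace$ with the tight C*-algebra of $S$, which is not established in this paper.

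The paper avoids this by never touching arbitrary covers. It uses the structural characterization of tight filters, Theorem~\ref{thm:tight.filters.in.es}: once $\xi$ is a filter with associated word $\alpha$, one shows each $\xi_n$ is an ultrafilter in $\acfrg{\alpha_{1,n}}$ using only relation~(i) (write $p_{r(\alpha_{1,n})}=p_A+p_{r(\alpha_{1,n})\setminus A}$ and apply $\varphi$), which handles the infinite-type case. For finite type, if $\xi$ failed to be tight then Theorem~\ref{thm:tight.filters.in.es}(ii) would produce a single $A\in\xi_{|\alpha|}$ with $\lbf(A\dgraph^1)$ finite and no $B\in\acf$ with $\emptyset\neq B\subseteq A\cap\dgraph^0_{sink}$; relation~(iv) then gives $p_A=\sum_b s_bp_{r(A,b)}s_b^*$, and applying $\varphi$ forces some $(\alpha b,r(A,b),\alpha b)\in\xi$, contradicting that $\alpha$ is the associated word. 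This uses relation~(iv) exactly once, on one specific cover, rather than on all covers.
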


\begin{proof}
	Observe that $(\alpha,A,\alpha)\leq (\beta,B,\beta)$ in $E(S)$ if and only if $s_{\alpha}p_As_{\alpha}^*\leq s_{\beta}p_Bs_{\beta}^*$ in $C^*\lspace$. Using that $\varphi$ is a *-homomorphism, it then follows that $\xi$ is a filter in $E(S)$. We have to prove that $\xi$ is tight. Let $\alpha$ be the labelled path associated with $\xi$.
	
	First, let us consider the case that $\xi$ is of infinite type. By Theorem \ref{thm:ultrafilters}, it is sufficient to show that $\xi_n$ is an ultrafilter in the Boolean algebra $\acfrg{\alpha_{1,n}}$ for each $n>0$. In order to establish this, observe that $(\alpha_{1,n},r(\alpha_{1,n}),\alpha_{1,n})\in\xi$, so that $\varphi(s_{\alpha_{1,n}}p_{r(\alpha_{1,n})}s_{\alpha_{1,n}})=1$. If $A\in\acfrg{\alpha_{1,n}}$, then $p_{r(\alpha_{1,n})}=p_A+p_{r(\alpha_{1,n})\setminus A}$. It follows that
	\[1=\varphi(s_{\alpha_{1,n}}p_{r(\alpha_{1,n})}s_{\alpha_{1,n}})=\varphi(s_{\alpha_{1,n}}p_As_{\alpha_{1,n}})+\varphi(s_{\alpha_{1,n}}p_{r(\alpha_{1,n})\setminus A}s_{\alpha_{1,n}})\]
	and hence $\varphi(s_{\alpha_{1,n}}p_{r(\alpha_{1,n})\setminus A}s_{\alpha_{1,n}})=1$ or $\varphi(s_{\alpha_{1,n}}p_{A}s_{\alpha_{1,n}})=1$. That means that $A\in\xi_n$ or $r(\alpha_{1,n})\setminus A\in\xi_n$, that is, $\xi_n$ is an ultrafilter.
	
	For the case that $\xi$ is of finite type, we use (ii) of Theorem \ref{thm:tight.filters.in.es}. If $|\alpha| > 0 $, the same argument as above shows that $\xi_{|\alpha|}$ is an ultrafilter. If $|\alpha|=0$, suppose by contradiction that $\xi_0$ is not an ultrafilter. Then there exists $C\in\acf\setminus\xi_0$ such that $C\cap A\neq\emptyset$ for all $A\in\xi_0$. For a fixed $A\in\xi_0$, since $A=(A\setminus C)\cup(A\cap C)$ and this union is disjoint, then
	\[1=\varphi(p_A)=\varphi(p_{A\setminus C}+p_{A\cap C})=\varphi(p_{A\setminus C})+\varphi(p_{A\cap C})=\varphi(p_{A\setminus C}),\]
	where the last equality holds because $A\cap C\subseteq C\notin\xi_0$. Hence $A\setminus C\in\xi_0$, but $(A\setminus C)\cap C=\emptyset$ which is a contradiction. So, in all cases, $\xi_{|\alpha|}$ is an ultrafilter. Assume now that $\xi$ is not tight; then, by (ii) of Theorem \ref{thm:tight.filters.in.es}, there exists $A\in\xi_{|\alpha|}$ such that $\lbf(A\dgraph^1)$ is finite and there is no $B\in\acfrg{\alpha}$ with $\emptyset\neq B\subseteq A\cap \dgraph^0_{sink}$. In particular $A\cap \dgraph^0_{sink}=\emptyset$, so that $\card{\lbf(A\dgraph^1)}>0$ and
	\[p_A=\sum_{b\in\lbf(A\dgraph^1)}s_bp_{r(A,b)}s_b^*\]
	holds. Since $(\alpha,A,\alpha)\in\xi$,
	\[1=\varphi(s_{\alpha}p_As_{\alpha}^*)=\sum_{b\in\lbf(A\dgraph^1)}\varphi(s_{\alpha}s_bp_{r(A,b)}s_b^*s_{\alpha}^*),\]
	which implies that $(\alpha b,r(A,b),\alpha b)\in\xi$ for some $b\in \lbf(A\dgraph^1)$; but this contradicts the fact that $\alpha$ is the word associated to $\xi$. Therefore, $\xi$ is tight.
\end{proof}

To construct the inverse of $\Phi$ from the above proposition, we have to show that if $\xi\in\ftight$ then there exists an element $\varphi\in\widehat{\Delta}$ such that
\[\varphi(s_{\alpha}p_As_{\alpha}^*)=[(\alpha,A,\alpha)\in\xi].\]
We would like to simply extend the above expression linearly to $\text{span}\{s_\alpha p_A s_\alpha^* \ | \ \alpha\in\awstar \ \mbox{and} \ A\in\acfrg{\alpha}\}$ but, in doing so, care must be taken to ensure the result is indeed a well-defined linear map. In order to show that this can be done, and that the resulting map extends to an element of $\widehat{\Delta}$, we control the norm of a finite linear combination on elements of the form $s_{\alpha}p_As_{\alpha}^*$ by rewriting the sum as a finite linear combination of orthogonal projections. We follow some of the ideas of \cite{MR3119197}.

\begin{lemma}\label{lemma:projection.q}
	Let $F\subseteq E(S)\setminus\{0\}$ be a finite set such that for all $u,v\in F$, $uv=0$, $u\leq v$ or $v\leq u$. For each $u=(\alpha,A,\alpha)\in F$, define
	\[q_u^F=q_{(\alpha,A,\alpha)}^F:=s_{\alpha}p_As_{\alpha}^*\prod_{\mathclap{\substack{(\beta,B,\beta)\in F \\ (\beta,B,\beta)< (\alpha,A,\alpha)}}}(s_{\alpha}p_As_{\alpha}^*-s_{\beta}p_Bs_{\beta}^*).\]
	Then, for all $u,v\in F$ with $u\neq v$, the projections $q^F_u$ and $q^F_v$ are mutually orthogonal projections in $\mathrm{span}\{s_{\beta}p_Bs_{\beta}^*\,|\,(\beta,B,\beta)\in F\}$. Also for $(\alpha,A,\alpha)\in F$
	\begin{equation}\label{eqn:sapasastar}
		s_{\alpha}p_As_{\alpha}^*=\sum_{\mathclap{\substack{u\in F \\ u\leq(\alpha,A,\alpha)}}}q_{u}^F.
	\end{equation}
\end{lemma}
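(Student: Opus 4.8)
The plan is to work entirely inside the abelian C*-subalgebra $\Delta\lspace$ and to abbreviate, for $u=(\alpha,A,\alpha)\in E(S)\setminus\{0\}$, the projection $e_u:=s_\alpha p_A s_\alpha^*$. Before anything else I would record three facts. (1) The family $\{e_u : u\in E(S)\setminus\{0\}\}$ consists of pairwise commuting projections, by Proposition \ref{prop:properties.of.c*-labelled.space}(ix). (2) If $u\le v$ in $E(S)$ then $e_ue_v=e_u$, i.e.\ $e_u\le e_v$ --- this follows from Proposition \ref{prop:order.in.es} together with the product formula in Proposition \ref{prop:properties.of.c*-labelled.space}(ix). (3) If $uv=0$ in $E(S)$ then $e_ue_v=0$, since comparing the multiplication rule in $S$ with the product formula in Proposition \ref{prop:properties.of.c*-labelled.space}(ix) shows that the meet being $0$ forces either non-comparable words or a factor $p_\emptyset=0$. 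From (2) and (3), for distinct $u,v\in F$ exactly one of $uv=0$, $u<v$, $v<u$ holds, and $F$ enjoys a ``forest'' property: two incomparable elements of $F$ have no common lower bound $w\neq 0$ in $E(S)$ (such a $w$ would satisfy $w\le uv=0$).

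I would then dispatch the easy half of the first assertion. Each factor $e_u-e_v$ with $v<u$ is a projection because $e_v\le e_u$, so $q_u^F$ is a product of commuting projections and hence a projection; and expanding $q_u^F=e_u\prod_{v<u}(e_u-e_v)$ into a sum of monomials $\pm\,e_u\prod_{v\in T}e_v$ with $T\subseteq\{v\in F: v<u\}$, each monomial either vanishes (if two indices in $\{u\}\cup T$ have zero product) or collapses --- because $\{u\}\cup T$ is then a chain --- to $\pm e_x$ with $x$ the least element of that chain, and $x\in F$; hence $q_u^F\in\mathrm{span}\{s_\beta p_B s_\beta^*:(\beta,B,\beta)\in F\}$. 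For orthogonality of $q_u^F$ and $q_v^F$ with $u\neq v$ I use the trichotomy: if $uv=0$ then $e_ue_v=0$ and, since $e_u$ is a factor of $q_u^F$, $e_v$ a factor of $q_v^F$, and all factors commute, $q_u^Fq_v^F=0$; if say $v<u$, then $e_u-e_v$ is one of the factors of $q_u^F$, and $(e_u-e_v)e_v=e_ue_v-e_v=0$ (using $e_v\le e_u$), so again $q_u^Fq_v^F=0$.

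The substance is the identity \eqref{eqn:sapasastar}, which I would prove by induction on $\#\{w\in F: w\le u_0\}$ for $u_0=(\alpha,A,\alpha)\in F$; this is essentially Möbius inversion on the forest $F$. If $u_0$ is minimal in $F$, the right-hand side is just $q_{u_0}^F=e_{u_0}$ (empty product). Otherwise let $M$ be the set of maximal elements of $\{w\in F: w<u_0\}$. The elements of $M$ are pairwise incomparable, hence pairwise of zero product, so $\{e_w:w\in M\}$ are pairwise orthogonal projections, each $\le e_{u_0}$; and by the forest property the down-set $\{w\in F:w\le u_0\}$ is the disjoint union of $\{u_0\}$ and the down-sets $\{w\in F:w\le w'\}$, $w'\in M$. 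Applying the induction hypothesis to each $w'\in M$ reduces the claim to showing $q_{u_0}^F=e_{u_0}-\sum_{w'\in M}e_{w'}$. Here the key step is the absorption identity $(e_{u_0}-e_{w'})(e_{u_0}-e_v)=e_{u_0}-e_{w'}$ valid whenever $v\le w'\le u_0$: grouping the factors $(e_{u_0}-e_v)$ of $\prod_{v<u_0}(e_{u_0}-e_v)$ according to the unique $w'\in M$ with $v\le w'$ (uniqueness again by the forest property), the product telescopes to $\prod_{w'\in M}(e_{u_0}-e_{w'})$, which equals $e_{u_0}-\sum_{w'\in M}e_{w'}$ because the $e_{w'}$ are orthogonal and dominated by $e_{u_0}$; multiplying by $e_{u_0}$ gives $q_{u_0}^F=e_{u_0}-\sum_{w'\in M}e_{w'}$, and the induction closes.

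I expect the routine parts to be the three opening facts and the monomial collapse; the genuine work, and the main obstacle, is the combinatorics inside \eqref{eqn:sapasastar} --- the disjoint decomposition of the down-set of $u_0$ over $M$ and the telescoping of $\prod_{v<u_0}(e_{u_0}-e_v)$ down to $\prod_{w'\in M}(e_{u_0}-e_{w'})$. One harmless subtlety worth noting is that distinct $u,v\in F$ may give $e_u=e_v$; but then if $v<u$ the factor $e_u-e_v$ of $q_u^F$ vanishes, so $q_u^F=0$, consistently with all the formulas above.
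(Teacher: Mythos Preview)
Your proof is correct. The orthogonality argument and the verification that each $q_u^F$ is a projection in the required span match the paper's proof essentially verbatim. The difference is in how you establish \eqref{eqn:sapasastar}.

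The paper proves \eqref{eqn:sapasastar} by induction on $\#F$: it removes a \emph{minimal} element $(\gamma,C,\gamma)$ from $F$, sets $G=F\setminus\{(\gamma,C,\gamma)\}$, derives the relation $q_{(\alpha,A,\alpha)}^F=q_{(\alpha,A,\alpha)}^G-q_{(\alpha,A,\alpha)}^G\,s_\gamma p_C s_\gamma^*$ when $(\gamma,C,\gamma)\le(\alpha,A,\alpha)$, and feeds this back into the sum. You instead fix $u_0$ and induct on the size of its down-set in $F$, exploiting the forest structure to partition $\{w\in F:w<u_0\}$ over the maximal elements $M$ just below $u_0$, and then telescope $\prod_{v<u_0}(e_{u_0}-e_v)$ to $\prod_{w'\in M}(e_{u_0}-e_{w'})=e_{u_0}-\sum_{w'\in M}e_{w'}$. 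Your route is a M\"obius-inversion style argument that yields the explicit closed form $q_{u_0}^F=e_{u_0}-\sum_{w'\in M}e_{w'}$ as a byproduct; the paper's route is a global reduction on $F$ that handles all $u_0$ simultaneously without isolating this formula. Both are short and self-contained; yours makes the combinatorics of $F$ more visible, while the paper's avoids having to verify the disjointness of the down-sets over $M$.
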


\begin{proof}
	For all $u\in F$,  $q_u^F$ is a product of commuting projections and therefore a projection in $\text{span}\{s_{\beta}p_Bs_{\beta}^*\,|\,(\beta,B,\beta)\in F\}$. Let $u=(\alpha,A,\alpha)$ and $v=(\beta,B,\beta)$ be elements of $F$ such that $u\neq v$. If $uv=0$, then $s_{\alpha}p_As_{\alpha}^*s_{\beta}p_Bs_{\beta}^*=0$, so that $q_u^Fq_v^F=0$. If $u< v$, then $(s_{\beta}p_Bs_{\beta}^*-s_{\alpha}p_As_{\alpha}^*)$ is a factor of $q_v^F$. Since $s_{\alpha}p_As_{\alpha}^*$ is a factor of $q_u^F$ and $s_{\alpha}p_As_{\alpha}^*s_{\beta}p_Bs_{\beta}^*=s_{\alpha}p_As_{\alpha}^*$, we have that $q_u^Fq_v^F=0$. The case $v< u$ is analogous.
	
	To prove (\ref{eqn:sapasastar}), we use induction on $\card{F}$. The result is immediate if $\card{F}=1$. Let $n > 1$ and suppose that the result is true for all $F$ with $\card{F}<n$. Let $F\subseteq E(S)$ be as in the hypothesis of the lemma, and with $\card{F}=n$. Chose a minimal element $(\gamma,C,\gamma)$ in $F$ and define $G=F\setminus\{(\gamma,C,\gamma)\}$. Since $(\gamma,C,\gamma)$ is minimal in $F$ then
	\[\sum_{\mathclap{\substack{u\in F \\ u\leq(\gamma,C,\gamma)}}}q_{u}^F=q_{(\gamma,C,\gamma)}^F=s_{\gamma}p_Cs_{\gamma}^*,\] that is, (\ref{eqn:sapasastar}) holds for $(\gamma,C,\gamma)$.
	
	Observe that, for a given $(\alpha,A,\alpha)\in G$,
	\[q_{(\alpha,A,\alpha)}^F=\left\{\begin{array}{ll}
	q_{(\alpha,A,\alpha)}^G, & \text{if }(\alpha,A,\alpha)(\gamma,C,\gamma)=0; \\
	q_{(\alpha,A,\alpha)}^G-q_{(\alpha,A,\alpha)}^Gs_{\gamma}p_Cs_{\gamma}^*, & \text{if }(\gamma,C,\gamma)\leq (\alpha,A,\alpha).
	\end{array}\right.\]
	Indeed, the above equality is trivially true if $(\alpha,A,\alpha)(\gamma,C,\gamma)=0$ and, in the case $(\gamma,C,\gamma)\leq (\alpha,A,\alpha)$, it holds since
	
	\dmal{q_{(\alpha,A,\alpha)}^F &=s_{\alpha}p_As_{\alpha}^*\prod_{\mathclap{\substack{(\beta,B,\beta)\in F \\ (\beta,B,\beta)< (\alpha,A,\alpha)}}}(s_{\alpha}p_As_{\alpha}^*-s_{\beta}p_Bs_{\beta}^*) \\ &= s_{\alpha}p_As_{\alpha}^*(s_{\alpha}p_As_{\alpha}^*-s_{\gamma}p_Cs_{\gamma}^*)\prod_{\mathclap{\substack{(\beta,B,\beta)\in G \\ (\beta,B,\beta)< (\alpha,A,\alpha)}}}(s_{\alpha}p_As_{\alpha}^*-s_{\beta}p_Bs_{\beta}^*) \\
		&= q_{(\alpha,A,\alpha)}^G-q_{(\alpha,A,\alpha)}^Gs_{\gamma}p_Cs_{\gamma}^*.}
	
	Now, if $(\alpha,A,\alpha)(\gamma,C,\gamma)=0$, then
	\[\sum_{\mathclap{\substack{u\in F \\ u\leq(\alpha,A,\alpha)}}}q_{u}^F=\sum_{\mathclap{\substack{u\in G \\ u\leq(\alpha,A,\alpha)}}}q_{u}^G=s_{\alpha}p_As_{\alpha}^*,\]
	where the last equality follows from the induction hypothesis. If $(\gamma,C,\gamma)\leq(\alpha,A,\alpha)$, then
	\dmal{\sum_{\mathclap{\substack{u\in F \\ u\leq(\alpha,A,\alpha)}}}q_{u}^F &=s_{\gamma}p_Cs_{\gamma}^*+\sum_{\mathclap{\substack{u\in G \\ u\leq(\alpha,A,\alpha)}}}q_{u}^F \\
		&=s_{\gamma}p_Cs_{\gamma}^*+\sum_{\mathclap{\substack{u\in G \\ u\leq(\alpha,A,\alpha)}}}(q_{u}^G-q_{u}^Gs_{\gamma}p_Cs_{\gamma}^*) \\
		&=s_{\gamma}p_Cs_{\gamma}^*+s_{\alpha}p_As_{\alpha}^*-s_{\alpha}p_As_{\alpha}^*s_{\gamma}p_Cs_{\gamma}^*=s_{\alpha}p_As_{\alpha}^*,}
	where the second equality follows since $q_u^F=q_{u}^G-q_{u}^Gs_{\gamma}p_Cs_{\gamma}^*$ even when $u\cdot(\gamma,C,\gamma)=0$ and the third equality follows from the induction hypothesis. Thus, (\ref{eqn:sapasastar}) holds.
\end{proof}

\begin{lemma}\label{lemma:correct.subset}
	For all finite $F\subseteq E(S)\setminus\{0\}$, there exists $F'\subseteq E(S)\setminus\{0\}$ such that $F'$ satisfies the hypothesis of Lemma \ref{lemma:projection.q} and the following conditions:
	\begin{enumerate}[(i)]
		\item for all $(\alpha,A,\alpha)\in F$, there exist $(\alpha,A_1,\alpha),\ldots,(\alpha,A_n,\alpha)\in F'$ such that $A$ is the union of $A_1,\ldots,A_n$;
		\item the labelled paths that appear in elements of $F'$ are the same as those that appear in elements of $F$;
		\item\label{lemma:item.fprime.empty.intersections} if $(\alpha,A,\alpha),(\alpha,B,\alpha)\in F'$ and $A\neq B$ then $A\cap B=\emptyset$.
	\end{enumerate}
\end{lemma}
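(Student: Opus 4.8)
The plan is to split $F'$ according to the labelled paths that occur in $F$, attaching over each such path the atoms of a suitably enlarged finite Boolean algebra. For a labelled path $\alpha$ occurring in $F$ (meaning $(\alpha,A,\alpha)\in F$ for some $A$), set
\[ \mathcal{G}_\alpha=\{\,r(C,\mu)\ :\ \alpha=\nu\mu \ \text{and}\ (\nu,C,\nu)\in F\,\}\subseteq\acfra; \]
thus $\mathcal{G}_\alpha$ contains the sets $A$ with $(\alpha,A,\alpha)\in F$ (the case $\mu=\eword$) and also the relative ranges $r(C,\mu)$ of the sets attached in $F$ to the prefixes $\nu$ of $\alpha$. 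Since $F$ is finite, $\mathcal{G}_\alpha$ is finite, and — using that $\acf$ is closed under relative complements — it generates a finite Boolean algebra $\mathcal{B}_\alpha$ inside $\acfra$; let $\mathrm{At}(\alpha)$ be its set of atoms, which is nonempty whenever $\alpha$ occurs in $F$. Define
\[ F'=\{\,(\alpha,D,\alpha)\ :\ \alpha \text{ occurs in } F,\ D\in\mathrm{At}(\alpha)\,\}. \]
Atoms are nonempty and lie in $\acfra$, so $F'\subseteq E(S)\setminus\{0\}$ and $F'$ is finite.

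Conditions (i), (ii) and (iii) will then be immediate. For (i), if $(\alpha,A,\alpha)\in F$ then $A\in\mathcal{G}_\alpha\subseteq\mathcal{B}_\alpha$, so $A$ is the disjoint union of those atoms of $\mathcal{B}_\alpha$ it dominates, each of which gives an element of $F'$. For (ii), only paths occurring in $F$ are used in $F'$, and conversely every such path $\alpha$ contributes at least one element since $\mathcal{G}_\alpha$ contains a nonempty set and hence $\mathrm{At}(\alpha)\neq\emptyset$. For (iii), distinct atoms of the same $\mathcal{B}_\alpha$ are disjoint.

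The substantive step is verifying that $F'$ satisfies the hypothesis of Lemma \ref{lemma:projection.q}: any two elements $u=(\alpha,A,\alpha)$, $v=(\beta,B,\beta)$ of $F'$ satisfy $uv=0$, $u\le v$ or $v\le u$. If $\alpha$ and $\beta$ are incomparable as words, the multiplication rule in $S$ gives $uv=0$ at once. If $\alpha=\beta$, then $A,B\in\mathrm{At}(\alpha)$, so either $A=B$ (and $u=v$) or $A\cap B=\emptyset$ (and $uv=0$). The remaining case, up to the symmetry $uv=vu$ (recall $E(S)$ is a semilattice), is $\alpha=\beta\beta'$ with $\beta'\neq\eword$. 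By Proposition \ref{prop:order.in.es}, $u\le v$ is equivalent to $A\subseteq r(B,\beta')$, while the multiplication rule gives $uv=(\alpha,A\cap r(B,\beta'),\alpha)$ if $A\cap r(B,\beta')\neq\emptyset$ and $uv=0$ otherwise; so it suffices to show that $A\cap r(B,\beta')\neq\emptyset$ forces $A\subseteq r(B,\beta')$. This is where the extra generators in $\mathcal{G}_\alpha$ are used: the weakly left-resolving hypothesis makes $r(\,\cdot\,,\beta')\colon\acfrg{\beta}\to\acfrg{\alpha}$ a morphism of Boolean algebras, and $r(\mathcal{G}_\beta,\beta')\subseteq\mathcal{G}_\alpha$, because a generator $r(C,\mu)\in\mathcal{G}_\beta$ (so $\beta=\nu\mu$, $(\nu,C,\nu)\in F$) is sent to $r(C,\mu\beta')$ and $\alpha=\nu(\mu\beta')$ exhibits this as an element of $\mathcal{G}_\alpha$. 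Hence $r(\mathcal{B}_\beta,\beta')\subseteq\mathcal{B}_\alpha$, so in particular $r(B,\beta')\in\mathcal{B}_\alpha$; since $A$ is an atom of $\mathcal{B}_\alpha$, it is either below or disjoint from every member of $\mathcal{B}_\alpha$, and $A\cap r(B,\beta')\neq\emptyset$ therefore yields $A\subseteq r(B,\beta')$, i.e.\ $u\le v$.

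The main obstacle, and the reason one cannot simply take $\mathcal{G}_\alpha=\{A:(\alpha,A,\alpha)\in F\}$, is exactly this comparability requirement between elements of $F'$ with words of different lengths: one has to feed the sets $r(B,\beta')$ produced from the shorter words into the Boolean algebra attached to the longer word $\alpha$, so that the atoms over $\alpha$ are automatically comparable to — or disjoint from — anything reachable from a prefix. Once $\mathcal{G}_\alpha$ is closed under these relative ranges, the rest is routine finite-Boolean-algebra bookkeeping.
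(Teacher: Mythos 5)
Your construction is correct, and at its core it is the same refinement idea as the paper's: over each labelled path occurring in $F$ one places the atoms of a finite Boolean algebra generated by relative ranges of the sets that $F$ attaches to prefixes of that path. The difference is in the organization. The paper argues by induction on the maximal word length $m$: it first refines the shorter-word part $G_1$ of $F$ into $G_1'$, and then, for each word $\alpha$ of length $m$, atomizes the family $\mathcal{J}_\alpha$ of relative ranges of sets taken from $G_1'\cup G_2$; in this way, for $v=(\beta,B,\beta)\in G_1'$ the set $r(B,\alpha')$ is literally one of the generators, so comparability of the new atoms with $v$ is immediate from the construction. You avoid the induction by defining the generating families $\mathcal{G}_\alpha$ in one shot from $F$ itself, and you recover the same comparability from the observation that, since $\acf$ is closed under relative complements and the labelled space is weakly left-resolving, $r(\,\cdot\,,\beta')$ preserves unions, intersections and relative complements and carries $\mathcal{G}_\beta$ into $\mathcal{G}_\alpha$; hence $r(B,\beta')$ lies in the generated algebra $\mathcal{B}_\alpha$ (though not necessarily among its generators), and an atom of $\mathcal{B}_\alpha$ is either below it or disjoint from it. What your route buys is a uniform, non-inductive definition of $F'$ together with an explicit appeal to the morphism property of the relative range maps recorded at the end of Section 2; the paper's inductive bookkeeping is a little more hands-on but never needs that property by name. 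One small point of care: read ``the Boolean algebra generated by $\mathcal{G}_\alpha$'' consistently as the subalgebra generated by $\mathcal{G}_\alpha$ (with top $\bigcup\mathcal{G}_\alpha$), in particular at $\alpha=\eword$ where $\acfrg{\eword}=\acf$ may have no unit; with that uniform reading (or with the unital reading at every nonempty word) the image under $r(\,\cdot\,,\beta')$ of an atom attached to the shorter word always lands in the algebra attached to the longer word, and your verification of the hypothesis of Lemma \ref{lemma:projection.q}, as well as of conditions (i)--(iii), goes through as written.
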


\begin{proof}
	For each finite $F\subseteq E(S)\setminus\{0\}$, define $m=\max\{|\alpha|\,|\,(\alpha,A,\alpha)\in F\}$. We prove the lemma by induction on $m$. If $m=0$, then $F=\{(\eword,B_1,\eword),\ldots,(\eword,B_l,\eword)\}$. Define
	\[\la{I}=\left\{\bigcap_{i\in I_1}B_i\setminus\bigcup_{j\in I_2}B_j\,|\,I_1\cup I_2=\{1,\ldots,l\},I_1\cap I_2=\emptyset\right\}\setminus\{\emptyset\}\]
	and $F'=\{(\eword,B,\eword)\,|\,B\in\la{I}\}$. Clearly, $F'$ satisfies the conditions in the statement.
	
	For $m>0$, suppose that the result is true for all finite $G\subseteq E(S)$ with $\max\{|\alpha|\,|\,(\alpha,A,\alpha)\in G\}<m$. Let us write $F=G_1\cup G_2$ where $G_1=\{(\alpha,A,\alpha)\in F\,|\,|\alpha|<m\}$ and $G_2=\{(\alpha,A,\alpha)\in F\,|\,|\alpha|=m\}$. By the induction hypothesis there exists $G'_1$ associated to $G_1$ as in the statement. Denote by $L_2$ the set of all $\alpha\in\awstar$ such that $(\alpha,A,\alpha)\in G_2$ for some $A\in\acf$. Fix $\alpha \in L_2$, define
	\[\la{J}_{\alpha}=\{r(A,\alpha'')\,|\,(\alpha',A,\alpha')\in G'_1\cup G_2\text{ and }\alpha'\alpha''=\alpha\}\]
	and consider $\la{I}_{\alpha}$ constructed from $\la{J}_{\alpha}$ as $\la{I}$ was constructed from $\{B_1,\ldots,B_l\}$ in the case $m=0$. Finally, define $F'_2=\cup_{\alpha\in L_2}\{(\alpha,B,\alpha)\in E(S)\,|\,B\in\la{I}_{\alpha}\}$ and $F'=G'_1\cup F'_2$, observing that $0\notin F'$.
	
	Let $u=(\alpha,A,\alpha)$ and $v=(\beta,B,\beta)$ be elements of $F'$. If $u$ and $v$ are both elements of either $G'_1$ or both elements of $F'_2$ then, by the definitions of $G'_1$ and $F'_2$, $u$ and $v$ are such that $uv=0$, $u\leq v$ or $v\leq u$. Suppose, then, that $u\in F'_2$ and $v\in G'_1$ so that $|\beta|<|\alpha|$. If $\alpha$ and $\beta$ are not comparable then $uv=0$. Otherwise, $\alpha=\beta\alpha'$ and, by the construction of $\la{I}_{\alpha}$, $A\subseteq r(B,\alpha')$ or $A\cap r(B,\alpha')=\emptyset$ (observe that $r(B,\alpha')\in\la{J}_{\alpha}$). In this case $u\leq v$ or $uv=0$.
	
	The other conditions from the statement are easily verified.
\end{proof}

\begin{remark}
	If $\xi$ is a filter in $E(S)$ and $F$ is a set satisfying the hypothesis of Lemma \ref{lemma:projection.q}, then $\xi \cap F$ is totally ordered because for all $u,v\in\xi$, $uv\neq 0$.
\end{remark}

\begin{lemma}
	Let $\xi\in\ftight$ and a finite $F\subseteq E(S)\setminus\{0\}$ be such that $\xi\cap F\neq\emptyset$ and for all $u,v\in F$, $uv=0$, $u\leq v$ or $v\leq u$. Let also $w=\min(\xi\cap F)$. Then there exists a non-zero $z\in E(S)$ such that $z\leq w$ and $zu=0$ for all $u\in F$ with $u<w$.
\end{lemma}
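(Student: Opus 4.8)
The plan is to argue by contradiction, recognising the failure of the conclusion as the statement that $Z_0:=\{u\in F \ | \ u<w\}$ is a finite cover for $w$, which the tightness of $\xi$ rules out. First I would check the setup is sound: for any $u,v\in\xi$ one has $uv\neq 0$, so the hypothesis on $F$ forces $\xi\cap F$ to be totally ordered; being finite and non-empty, it has a least element $w=\min(\xi\cap F)$, and $w\in\xi$ while $w\neq 0$ since $F\subseteq E(S)\setminus\{0\}$. Note $Z_0$ is a finite subset of $\lset{w}$, because $u<w$ implies $u\leq w$.

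Suppose, towards a contradiction, that no non-zero $z\in E(S)$ with $z\leq w$ satisfies $zu=0$ for all $u\in Z_0$. Unwinding the quantifiers, this says that for every non-zero $y\leq w$ there is $u\in Z_0$ with $uy\neq 0$; since the product of idempotents in $E(S)$ is their meet, and a non-zero element below $w$ is exactly a non-zero element of $\lset{w}$, this is precisely the assertion that $Z_0$ is a cover for $w$ in the sense of Subsection~\ref{subsection:filters.and.characters}. (If $Z_0=\emptyset$ this cannot hold, as $w$ itself is a non-zero element of $\lset{w}$; in that degenerate case $z=w$ meets the requirements vacuously, so we may assume $Z_0\neq\emptyset$.)

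Now I would invoke tightness. As $\xi\in\ftight$, the character $\phi_\xi$ is tight, so for the finite cover $Z_0$ of $w$ we have $\bigvee_{u\in Z_0}\phi_\xi(u)=\phi_\xi(w)$; and $\phi_\xi(w)=1$ because $w\in\xi$. Since a join equal to $1$ in the Boolean algebra $\{0,1\}$ forces one of the joined elements to be $1$, there is $u\in Z_0$ with $\phi_\xi(u)=1$, i.e.\ $u\in\xi$. But then $u\in\xi\cap F$ with $u<w=\min(\xi\cap F)$, a contradiction; hence a $z$ as required exists. The only step that requires any care is the passage between the quantifier form of ``no valid $z$ exists'' and the definition of a cover, together with the identification of the semilattice meet in $E(S)$ with the product; otherwise the argument is immediate and there is no real obstacle.
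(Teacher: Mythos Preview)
Your proof is correct and takes a genuinely different route from the paper's. The paper argues constructively: writing $w=(\alpha_{1,l},A,\alpha_{1,l})$ and invoking the classification of tight filters (Theorem~\ref{thm:tight.filters.in.es}), it splits into cases according to whether $\xi$ is of infinite type, or of finite type satisfying condition (ii)(a) or (ii)(b), and in each case builds an explicit $z$ by hand---extending the word and/or intersecting with suitable sets pulled from the ultrafilters $\xi_n$. Your argument instead goes straight to the abstract definition of a tight character: the failure of the conclusion is exactly the statement that $Z_0=\{u\in F\mid u<w\}$ covers $w$, and tightness then forces some $u\in Z_0$ into $\xi$, contradicting minimality of $w$. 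Your approach is shorter and more conceptual, and it shows the lemma holds for tight filters in any inverse semigroup, not just this particular $E(S)$; the paper's approach, on the other hand, yields an explicit $z$ and illustrates concretely how the structural conditions in Theorem~\ref{thm:tight.filters.in.es} do their work. Since the only downstream use of the lemma (Remark~\ref{remark:q.neq.zero}) needs merely the existence of $z$, nothing is lost by your route.
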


\begin{proof}
	The result is trivial if there is no $u\in F$ with $u<w$ (take $z=w$). Suppose, then, that there exists at least one such $u$. Let $\alpha$ be the word associated to $\xi$. Since $w\in\xi$, $w=(\alpha_{1,l},A,\alpha_{1,l})$ for some $l\geq 0$ and $A\in \acf$. We consider the cases given by Theorem \ref{thm:tight.filters.in.es}.
	
	Case (i): $\xia$ is of infinite type. Let $n$ be the greatest of all lengths of labelled paths $\beta$ that appears in an element $v=(\beta,B,\beta)\in F$ with $v<w$ and observe that $l\leq n$. For an element $u\in F$ with $u<w$ of the form $u=(\alpha_{1,m},B,\alpha_{1,m})$, we have that $u\notin \xi$ and hence $r(B,\alpha_{m+1,n})\notin \xi_n$ since $\{\xi_n\}_n$ is a complete family. Since $\xi_n$ is an ultrafilter, there exists $C_u\in\xi_n$ such that $C_u\cap r(B,\alpha_{m+1,n})=\emptyset$. Let $C=\cap_{u\in F,u<w}C_u\in\xi_n$ and define $z=(\alpha_{1,n},C\cap r(A,\alpha_{l+1,n}),\alpha_{1,n})$, which is non-zero because $C\cap r(A,\alpha_{l+1,n})\in\xi_n$. Then $z\leq w$ and it is easily verified that $zu=0$ for all $u\in F$ with $u<w$.
	
	Case (ii): $\xia$ is of finite type. Using a similar argument as the previous case we find $C\in\xi_{|\alpha|}$ such that for all $u\in F$ with $u<w$ of the form $=(\alpha_{1,m},B,\alpha_{1,m})$, we have that $C\cap r(B,\alpha_{m+1,|\alpha|})=\emptyset$. Define $D=C\cap r(A,\alpha_{l+1,|\alpha|})\in\xi_{|\alpha|}$.
	
	Case (ii)(a): $\lbf(D\dgraph^1)$ is infinite. Choose $b\in\lbf(D\dgraph^1)$ be a letter that is different from $\beta_{|\alpha|+1}$ for all labelled paths $\beta$ such that $|\beta|\geq|\alpha|+1$ and that it appears in an element $v=(\beta,B,\beta)\in F$. Define $z=(\alpha b,r(D,b),\alpha b)$. By construction this $z$ satisfies all of the conditions in the statement.
	
	Case (ii)(b): there exists $G\in\acfra$ such that $\emptyset\neq G\subseteq D\cap\dgraph^0_{sink}$. Define $z=(\alpha,G,\alpha)$ so that $z$ is non-zero and $z\leq w$. Let $u=(\beta,B,\beta)\in F$ be such that $u<w$. If $\beta$ is not comparable with $\alpha$ then it is immediate that $zu=0$. If $\beta$ is a beginning of $\alpha$ then $zu=0$ by the construction of $z$. Finally, if $\beta=\alpha\gamma$ for some $\gamma\in\awplus$, ie $\gamma\neq\eword$, then $zu=0$ because $r(G,\gamma)=\emptyset$.
\end{proof}

\begin{remark}\label{remark:q.neq.zero}
	Let $\xi\in\ftight$ and $F\subseteq E(S)\setminus\{0\}$ be a finite subset as in the previous Lemma, and let $w=\min(\xi\cap F)$. We claim that $q_w^F\neq 0$. Indeed, if $z=(\gamma,C,\gamma)$ is as in the previous Lemma, then from the definition of $q_w^F$ in Lemma \ref{lemma:projection.q} and from $z\leq w$, it follows that $s_{\gamma}p_Cs^*_{\gamma}q_w^F=s_{\gamma}p_Cs^*_{\gamma}$. From Theorem \ref{thm:representation.general.case}, $s_{\gamma}p_Cs^*_{\gamma}\neq 0$ so that $q_w^F \neq 0$. 
\end{remark}

\begin{lemma}\label{lemma:norm.element.diagonal}
	Let $\xi\in\ftight$ be given and let $F\subseteq E(S)\setminus\{0\}$ be a finite set. For each $u\in F$, let $\lambda_u\in\mathbb{C}\setminus\{0\}$. Then
	\[\left|\sum_{u\in F\cap\xi}\lambda_u\right|\leq\left\|\sum_{(\alpha,A,\alpha)\in F}\lambda_{(\alpha,A,\alpha)}s_{\alpha}p_As^*_{\alpha}\right\|.\]
\end{lemma}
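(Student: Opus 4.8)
The plan is to turn the operator on the right into a linear combination of \emph{mutually orthogonal} projections, whose norm is simply the largest modulus of a coefficient, and then to identify one such coefficient with $\sum_{u\in F\cap\xi}\lambda_u$. If $F\cap\xi=\emptyset$ the left-hand side is $0$ and there is nothing to prove, so I assume $F\cap\xi\ne\emptyset$.

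First I would replace $F$ by the refined set $F'$ produced by Lemma~\ref{lemma:correct.subset}: it satisfies the hypotheses of Lemma~\ref{lemma:projection.q}, uses the same labelled paths as $F$, each $A$ with $(\alpha,A,\alpha)\in F$ is the union of the $B$'s with $(\alpha,B,\alpha)\in F'$ (and, by condition~(iii) of that lemma, these are \emph{exactly} the $(\alpha,B,\alpha)\in F'$ with $B\subseteq A$, since a non-empty $B$ disjoint from all of them cannot lie inside $A$), and for a fixed labelled path the sets occurring in $F'$ are pairwise disjoint. Relation~(i) of Definition~\ref{def:c*-algebra.labelled.space.v3} then gives $p_A=\sum_B p_B$ over those constituents, whence
\[\sum_{(\alpha,A,\alpha)\in F}\lambda_{(\alpha,A,\alpha)}\,s_\alpha p_A s_\alpha^*=\sum_{(\alpha,B,\alpha)\in F'}\nu_{(\alpha,B,\alpha)}\,s_\alpha p_B s_\alpha^*,\qquad \nu_{(\alpha,B,\alpha)}:=\!\!\sum_{\substack{(\alpha,A,\alpha)\in F\\ B\subseteq A}}\!\!\lambda_{(\alpha,A,\alpha)}.\]
Applying Lemma~\ref{lemma:projection.q} to $F'$, that is, substituting $s_\alpha p_B s_\alpha^*=\sum_{u\in F',\,u\le(\alpha,B,\alpha)}q_u^{F'}$ and interchanging the finite sums, rewrites this as $\sum_{u\in F'}c_u\,q_u^{F'}$ with $c_u=\sum_{v\in F',\,v\ge u}\nu_v$ and the $q_u^{F'}$ mutually orthogonal projections. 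Hence, for any $u_0\in F'$ with $q_{u_0}^{F'}\ne 0$, multiplying through by $q_{u_0}^{F'}$ gives $\bigl\|\sum_u c_u q_u^{F'}\bigr\|\ge|c_{u_0}|$.

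The right choice is $u_0=w':=\min(F'\cap\xi)$. To see $F'\cap\xi\ne\emptyset$: pick $(\alpha,A,\alpha)\in F\cap\xi$; then $A\in\xi_{|\alpha|}$, which is an ultrafilter in $\acfrg{\alpha}$ (by Theorem~\ref{thm:tight.filters.in.es} together with the completeness of $\{\xi_n\}_n$; note that $\xi_0\ne\emptyset$, hence is an ultrafilter, as soon as $\xi$ contains a triple with empty word), and since $A$ is the disjoint union of its constituents from $F'$ and $\xi_{|\alpha|}$ is a proper prime filter, exactly one constituent $B^\ast$ lies in $\xi_{|\alpha|}$, i.e.\ $(\alpha,B^\ast,\alpha)\in F'\cap\xi$. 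As $F'\cap\xi$ is totally ordered (products of elements of a filter are non-zero) its minimum $w'$ exists, and $q_{w'}^{F'}\ne 0$ by Remark~\ref{remark:q.neq.zero} applied to $F'$. It remains to compute $c_{w'}=\sum_{v\in F',\,v\ge w'}\nu_v$, which reduces to showing that for each $(\alpha,A,\alpha)\in F$ the number of $(\alpha,B,\alpha)\in F'$ with $B\subseteq A$ and $(\alpha,B,\alpha)\ge w'$ equals $1$ if $(\alpha,A,\alpha)\in\xi$ and $0$ otherwise. If $(\alpha,A,\alpha)\in\xi$, its unique constituent $B^\ast\in\xi_{|\alpha|}$ gives $(\alpha,B^\ast,\alpha)\in F'\cap\xi$, hence $\ge w'$, while any other constituent $B$ satisfies $B\cap B^\ast=\emptyset$, so $(\alpha,B,\alpha)(\alpha,B^\ast,\alpha)=0$ and the non-zero $w'\le(\alpha,B^\ast,\alpha)$ cannot also be $\le(\alpha,B,\alpha)$. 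If $(\alpha,A,\alpha)\notin\xi$, then $w'\le(\alpha,B,\alpha)$ with $B\subseteq A$ would force $(\alpha,B,\alpha)\in\xi$ and hence $A\in\xi_{|\alpha|}$, a contradiction. Therefore $c_{w'}=\sum_{u\in F\cap\xi}\lambda_u$, and
\[\Bigl|\sum_{u\in F\cap\xi}\lambda_u\Bigr|=|c_{w'}|\le\Bigl\|\sum_{u\in F'}c_u q_u^{F'}\Bigr\|=\Bigl\|\sum_{(\alpha,A,\alpha)\in F}\lambda_{(\alpha,A,\alpha)}s_\alpha p_A s_\alpha^*\Bigr\|.\]

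I expect the main difficulty to be organisational rather than analytic: carefully managing the passage $F\rightsquigarrow F'$ — the two disjointness-based counting statements and the non-emptiness of $F'\cap\xi$ — since the only estimate used is the trivial norm bound for a combination of orthogonal projections, and all the substantive structural facts are already contained in Lemmas~\ref{lemma:projection.q} and~\ref{lemma:correct.subset} and in Remark~\ref{remark:q.neq.zero}.
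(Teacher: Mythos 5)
Your proof is correct and takes essentially the same route as the paper: refine $F$ to $F'$ via Lemma~\ref{lemma:correct.subset}, expand everything in the mutually orthogonal projections $q_u^{F'}$ of Lemma~\ref{lemma:projection.q}, and bound the norm below by the coefficient attached to $q_{w'}^{F'}$ for $w'=\min(F'\cap\xi)$, which is non-zero by Remark~\ref{remark:q.neq.zero}. The only cosmetic differences are that you compress with $q_{w'}^{F'}$ instead of using the exact max-norm formula for a combination of orthogonal projections, and your count of constituents above $w'$ plays the role of the paper's multiplicities $\eta_{(\gamma,C,\gamma),(\alpha,A,\alpha)}$.
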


\begin{proof}
	The result is trivial if $F\cap\xi=\emptyset$, so suppose $F\cap\xi\neq\emptyset$. Let $F'$ be the set constructed from $F$ as in Lemma \ref{lemma:correct.subset}. 	Observe that $F'\cap\xi\neq\emptyset$; indeed, for a given $(\alpha,A,\alpha)\in F\cap\xi$ there exist $(\alpha,B_1,\alpha),\ldots,(\alpha,B_m,\alpha)\in F'$ such that $A$ is the disjoint union of $B_1,\ldots,B_m$. Since $A\in\xi_{|\alpha|}$ and $\xi_{|\alpha|}$ is a prime filter, there exists $i_0\in\{1,\ldots,m\}$ such that $B_{i_0}\in\xi_{|\alpha|}$ and therefore $(\alpha,B_{i_0},\alpha)\in F'\cap\xi$. If follows that $F'\cap\xi\neq\emptyset$, as claimed.
	
	For each $(\gamma,C,\gamma)\in F'$ and $(\alpha,A,\alpha)\in F$ with $(\alpha,A,\alpha)\geq (\gamma,C,\gamma)$, define
	\[\eta_{(\gamma,C,\gamma),(\alpha,A,\alpha)}=\card{\{(\alpha,B,\alpha)\in F'\,|\,(\alpha,A,\alpha)\geq(\alpha,B,\alpha)\geq(\gamma,C,\gamma)\}}.\]
	
	Let us prove that $\eta_{(\gamma,C,\gamma),(\alpha,A,\alpha)}\leq 1$: let $(\alpha,B,\alpha),(\alpha,B',\alpha)\in F'$ be such that $(\alpha,A,\alpha)\geq(\alpha,B,\alpha)\geq (\gamma,C,\gamma)$ and $(\alpha,A,\alpha)\geq(\alpha,B',\alpha)\geq (\gamma,C,\gamma)$. Then, in the semilattice $E(S)$, $(\alpha,B,\alpha)(\alpha,B',\alpha)=(\alpha,B\cap B',\alpha)\geq (\gamma,C,\gamma)\neq 0$ and hence $B\cap B'\neq\emptyset$. Therefore, by property  (\ref{lemma:item.fprime.empty.intersections}) of $F'$ in Lemma \ref{lemma:correct.subset}, we must have $B=B'$. It follows that $\eta_{(\gamma,C,\gamma),(\alpha,A,\alpha)}\leq 1$.

	Let $w=\min (F'\cap\xi)$. If $(\alpha,B_{i_0},\alpha)$ is as in the first paragraph of the proof, then $w\leq (\alpha,B_{i_0},\alpha)\leq (\alpha,A,\alpha)$ and so $\eta_{w,(\alpha,A,\alpha)}\geq 1$, hence $\eta_{w,(\alpha,A,\alpha)}=1$.
	
	Therefore, 
	\dmal{\sum_{(\alpha,A,\alpha)\in F} \lambda_{(\alpha,A,\alpha)}s_{\alpha}p_As_{\alpha}^* 
		&\stackrel{(1)}{=}\sum_{(\alpha,A,\alpha)\in F}\lambda_{(\alpha,A,\alpha)}s_{\alpha}\left(\sumtl{(\alpha,B,\alpha)\in F'}{B\subseteq A}{p_B}\right) s_{\alpha}^* \\
		&=\sum_{(\alpha,A,\alpha)\in F}\lambda_{(\alpha,A,\alpha)}\sumtl{(\alpha,B,\alpha)\in F'}{B\subseteq A}{s_{\alpha}p_Bs^*_{\alpha}} \\
		&\stackrel{(2)}{=}\sum_{(\alpha,A,\alpha)\in F}\lambda_{(\alpha,A,\alpha)}\sumtl{(\alpha,B,\alpha)\in F'}{B\subseteq A}{}\left(\sumtl{(\gamma,C,\gamma)\in F'}{(\gamma,C,\gamma)\leq(\alpha,B,\alpha)}{q^{F'}_{(\gamma,C,\gamma)}}\right) \\
		&\stackrel{(3)}{=}\sum_{(\alpha,A,\alpha)\in F}\lambda_{(\alpha,A,\alpha)}\sumtl{(\gamma,C,\gamma)\in F'}{(\gamma,C,\gamma)\leq(\alpha,A,\alpha)}{\eta_{(\gamma,C,\gamma),(\alpha,A,\alpha)}q^{F'}_{(\gamma,C,\gamma)}} \\
		&\stackrel{(4)}{=}\sum_{(\gamma,C,\gamma)\in F'}\left(\sumtl{(\alpha,A,\alpha)\in F}{(\alpha,A,\alpha)\geq(\gamma,C,\gamma)}{\eta_{(\gamma,C,\gamma),(\alpha,A,\alpha)}\lambda_{(\alpha,A,\alpha)}}\right)q^{F'}_{(\gamma,C,\gamma)}.}
	The equalities above are justified as follows: (1) is a consequence of Lemma \ref{lemma:correct.subset}, (2) follows from Lemma \ref{lemma:projection.q}, (3) is due to the definition of $\eta_{(\gamma,C,\gamma),(\alpha,A,\alpha)}$, and (4) is true since the sums on both sides are over all pairs $(\alpha,A,\alpha)\in F$, $(\gamma,C,\gamma)\in F'$ such that $(\gamma,C,\gamma)\leq(\alpha,A,\alpha)$.

	Hence
	\dmal{\left\|\sum_{(\alpha,A,\alpha)\in F} \lambda_{(\alpha,A,\alpha)}s_{\alpha}p_As_{\alpha}^*\right\| &=\left\|\sum_{(\gamma,C,\gamma)\in F'}\left(\sumtl{(\alpha,A,\alpha)\in F}{(\alpha,A,\alpha)\geq(\gamma,C,\gamma)}{\eta_{(\gamma,C,\gamma),(\alpha,A,\alpha)}\lambda_{(\alpha,A,\alpha)}}\right)q^{F'}_{(\gamma,C,\gamma)}\right\| \\
		&\stackrel{(5)}{=}\max_{\substack{(\gamma,C,\gamma)\in F' \\ q^{F'}_{(\gamma,C,\gamma)}\neq 0}}\left|\sumtl{(\alpha,A,\alpha)\in F}{(\alpha,A,\alpha)\geq(\gamma,C,\gamma)}{{\eta_{(\gamma,C,\gamma),(\alpha,A,\alpha)}\lambda_{(\alpha,A,\alpha)}}}\right| \\
		&\stackrel{(6)}{\geq}\left|\sumtl{(\alpha,A,\alpha)\in F}{(\alpha,A,\alpha)\geq w}{\eta_{w,(\alpha,A,\alpha)}\lambda_{(\alpha,A,\alpha)}}\right| \\
		&\stackrel{(7)}{=}\left|\sum_{(\alpha,A\alpha)\in F\cap\xi}\eta_{w,(\alpha,A,\alpha)}\lambda_{(\alpha,A,\alpha)}\right|\\
		&\stackrel{(8)}{=}\left|\sum_{(\alpha,A,\alpha)\in F\cap\xi}\lambda_{(\alpha,A,\alpha)}\right|,}
	where (5) is due to the fact that the projections $q^{F'}_{(\gamma,C,\gamma)}$ are pairwise orthogonal (Lemma \ref{lemma:projection.q}), (6) follows from Remark \ref{remark:q.neq.zero}, (7) is true since $\xi$ is a filter and (8) is a consequence of $\eta_{w,(\alpha,A,\alpha)}=1$.
\end{proof}

\begin{proposition}\label{prop:spectre.from.filter}
	For each $\xi\in\ftight$ there is a unique $\varphi\in\widehat{\Delta}$ such that $\varphi(s_{\alpha}p_As^*_{\alpha})=[(\alpha,A,\alpha)\in\xi]$ for all $(\alpha,A,\alpha)\in E(S)$.
\end{proposition}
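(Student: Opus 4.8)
The plan is to define $\varphi$ first on the dense $*$-subalgebra $\mathrm{span}\{s_\alpha p_A s_\alpha^* \mid \alpha\in\awstar,\ A\in\acfrg{\alpha}\}$ of $\Delta$ by the formula
\[
\varphi\!\left(\sum_{(\alpha,A,\alpha)\in F}\lambda_{(\alpha,A,\alpha)}\,s_\alpha p_A s_\alpha^*\right)=\sum_{(\alpha,A,\alpha)\in F\cap\xi}\lambda_{(\alpha,A,\alpha)},
\]
where $F\subseteq E(S)\setminus\{0\}$ is finite, and then to extend by continuity. First I would check that this is well defined as a linear functional: if a linear combination of the $s_\alpha p_A s_\alpha^*$ equals $0$ in $C^*\lspace$, I must show the corresponding scalar sum is $0$. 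This follows immediately from Lemma \ref{lemma:norm.element.diagonal}, since that lemma gives
\[
\Bigl|\sum_{(\alpha,A,\alpha)\in F\cap\xi}\lambda_{(\alpha,A,\alpha)}\Bigr|\le\Bigl\|\sum_{(\alpha,A,\alpha)\in F}\lambda_{(\alpha,A,\alpha)}s_\alpha p_A s_\alpha^*\Bigr\|;
\]
in particular if the operator is $0$ the scalar sum vanishes, so $\varphi$ is well defined, and the same inequality shows $\|\varphi\|\le 1$ on the dense subalgebra, so $\varphi$ extends uniquely to a bounded linear functional on $\Delta$.

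Next I would verify $\varphi$ is a nonzero $*$-homomorphism, i.e. a character. Nonzero is clear since $\varphi(p_{r(\alpha_1)})=1$ for $\alpha$ the word of $\xi$ (or $\varphi(p_A)=1$ for suitable $A\in\xi_0$). For multiplicativity and the $*$-property it suffices, by continuity and linearity, to check them on generators $s_\alpha p_A s_\alpha^*$. The $*$-property is trivial because each $s_\alpha p_A s_\alpha^*$ is self-adjoint and the $\lambda_u$ range over $\mathbb{C}$, so $\varphi$ sends adjoints to complex conjugates. For multiplicativity I would use item (ix) of Proposition \ref{prop:properties.of.c*-labelled.space}, which says $(s_\alpha p_A s_\alpha^*)(s_\beta p_B s_\beta^*)$ is again of the form $s_\gamma p_C s_\gamma^*$ (with $(\gamma,C,\gamma)=(\alpha,A,\alpha)(\beta,B,\beta)$ in $E(S)$) or $0$; so I need
\[
[(\alpha,A,\alpha)(\beta,B,\beta)\in\xi]=[(\alpha,A,\alpha)\in\xi]\cdot[(\beta,B,\beta)\in\xi].
\]
This is exactly the statement that $\xi$ is a filter: if both idempotents lie in $\xi$ then so does their product (filters are closed under meets and the product in $E(S)$ is the meet), giving $\le$ trivially and $\ge$ from filter-closure; conversely if the product lies in $\xi$ then, since the product is $\le$ both factors and filters are upward closed, both factors lie in $\xi$. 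Hence $\varphi$ is multiplicative on generators, and therefore a character of $\Delta$, that is, $\varphi\in\widehat{\Delta}$.

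Finally, uniqueness: any $\psi\in\widehat{\Delta}$ with $\psi(s_\alpha p_A s_\alpha^*)=[(\alpha,A,\alpha)\in\xi]$ agrees with $\varphi$ on the dense subalgebra $\mathrm{span}\{s_\alpha p_A s_\alpha^*\}=\overline{\mathrm{span}}\{s_\alpha p_A s_\alpha^*\}$ (using the description of $\Delta$ after Definition \ref{def:diagonal.subalgebra}), hence on all of $\Delta$ by continuity. The one genuine subtlety — the step I expect to be the real content — is the well-definedness via Lemma \ref{lemma:norm.element.diagonal}; everything else is a routine consequence of $\xi$ being a filter and of item (ix) of Proposition \ref{prop:properties.of.c*-labelled.space}. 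Note that Lemma \ref{lemma:norm.element.diagonal} is stated only for $\lambda_u\neq 0$, but this causes no trouble: a general linear combination can be rewritten dropping the zero coefficients before applying the lemma, and the resulting functional is unaffected.
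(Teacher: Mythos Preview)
Your proof is correct and follows essentially the same approach as the paper: define $\varphi$ on the dense span via the displayed formula, invoke Lemma~\ref{lemma:norm.element.diagonal} for well-definedness and boundedness, then extend by continuity. The paper's proof is terser---it simply states that ``it is easily verified that $\varphi$ preserves products'' and that ``uniqueness is immediate''---whereas you spell out the multiplicativity check via the filter axioms and item~(ix) of Proposition~\ref{prop:properties.of.c*-labelled.space}, which is exactly the routine verification the paper has in mind.
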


\begin{proof}
	An element $x\in\text{span}\{s_{\alpha}p_As^*_{\alpha}\,|\,(\alpha,A,\alpha)\in E(S)\}$ can be written as
	\[x=\sum_{(\alpha,A,\alpha)\in F}\lambda_{(\alpha,A,\alpha)}s_{\alpha}p_As^*_{\alpha}\]
	for some finite $F\subseteq E(S)$. By Lemma \ref{lemma:norm.element.diagonal},
	\[\varphi(x)=\sum_{(\alpha,A,\alpha)\in F\cap\xi}\lambda_{(\alpha,A,\alpha)}\]
	gives a well defined continuous linear map from $\text{span}\{s_{\alpha}p_As^*_{\alpha}\,|\,(\alpha,A,\alpha)\in E(S)\}$ into $\mathbb{C}$. It is easily verified that $\varphi$ preservers products so that it extends to an element $\varphi\in\widehat{\Delta}$ that satisfies the equality from the statement. The uniqueness is immediate.
\end{proof}

\begin{theorem}
	Let $\lspace$ be a weakly left-resolving labelled space such that $\acf$ is closed under relative complements. Then, there exists a homeomorphism between $\ftight$ and $\widehat{\Delta}$.
\end{theorem}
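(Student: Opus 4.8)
The plan is to show that the map $\Phi\colon\widehat{\Delta}\to\ftight$ of Proposition \ref{prop:filter.from.spectre} is a homeomorphism, with inverse the map $\Psi\colon\ftight\to\widehat{\Delta}$ sending $\xi$ to the unique character $\varphi$ of $\Delta$ satisfying $\varphi(s_\alpha p_A s_\alpha^*)=[(\alpha,A,\alpha)\in\xi]$, whose existence and uniqueness are exactly the content of Proposition \ref{prop:spectre.from.filter}. Since $\Phi$ and $\Psi$ are already known to be well defined, only two things remain: that they are mutually inverse, and that both are continuous.

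First I would verify $\Psi\circ\Phi=\mathrm{id}_{\widehat{\Delta}}$ and $\Phi\circ\Psi=\mathrm{id}_{\ftight}$. The key point is that, by item (ix) of Proposition \ref{prop:properties.of.c*-labelled.space}, each $s_\alpha p_A s_\alpha^*$ is a projection, so for any $\varphi\in\widehat{\Delta}$ the scalar $\varphi(s_\alpha p_A s_\alpha^*)$ is idempotent and hence lies in $\{0,1\}$; moreover $\{s_\alpha p_A s_\alpha^*\}$ has dense linear span in $\Delta$, so a character of $\Delta$ is completely determined by its values on these elements. For $\xi\in\ftight$ one then has $\Phi(\Psi(\xi))=\{(\alpha,A,\alpha)\mid\Psi(\xi)(s_\alpha p_A s_\alpha^*)=1\}=\{(\alpha,A,\alpha)\in E(S)\mid(\alpha,A,\alpha)\in\xi\}=\xi$, using that $0$ never belongs to a filter. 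For $\varphi\in\widehat{\Delta}$, the characters $\Psi(\Phi(\varphi))$ and $\varphi$ agree on every $s_\alpha p_A s_\alpha^*$, since $[(\alpha,A,\alpha)\in\Phi(\varphi)]=[\varphi(s_\alpha p_A s_\alpha^*)=1]=\varphi(s_\alpha p_A s_\alpha^*)$, and by the determination property just mentioned they coincide. Thus $\Phi$ is a bijection with inverse $\Psi$.

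Next I would check continuity of both maps. The topology on $\widehat{\Delta}$ is the weak-$*$ topology; since characters of $\Delta$ have norm at most $1$ and $\{s_\alpha p_A s_\alpha^*\}$ is total in $\Delta$, a net $(\varphi_\lambda)$ converges to $\varphi$ in $\widehat{\Delta}$ if and only if $\varphi_\lambda(s_\alpha p_A s_\alpha^*)\to\varphi(s_\alpha p_A s_\alpha^*)$ for every $(\alpha,A,\alpha)\in E(S)$. The topology on $\ftight$ is that of pointwise convergence of the associated characters of $E(S)$, so $\xi^\lambda\to\xi$ if and only if $[e\in\xi^\lambda]\to[e\in\xi]$ for every $e\in E(S)$. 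As all the quantities involved lie in the discrete set $\{0,1\}$, both convergences amount to eventual equality, and the defining relation $\varphi(s_\alpha p_A s_\alpha^*)=[(\alpha,A,\alpha)\in\Phi(\varphi)]$ (respectively $\Psi(\xi)(s_\alpha p_A s_\alpha^*)=[(\alpha,A,\alpha)\in\xi]$) matches the generating convergences on the two sides term by term. Hence both $\Phi$ and $\Psi=\Phi^{-1}$ are continuous, and $\Phi$ is a homeomorphism.

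I do not expect a serious obstacle to remain at this stage: the substantive work is already carried out in Propositions \ref{prop:filter.from.spectre} and \ref{prop:spectre.from.filter}, the latter resting on the norm estimate of Lemma \ref{lemma:norm.element.diagonal}, which in turn relies on the orthogonal-decomposition Lemmas \ref{lemma:projection.q} and \ref{lemma:correct.subset} together with the faithfulness statement of Theorem \ref{thm:representation.general.case}. The only point requiring mild care in the present proof is to express the topologies on $\widehat{\Delta}$ and on $\ftight$ in terms of the common indexing family $\{s_\alpha p_A s_\alpha^*\}$, $(\alpha,A,\alpha)\in E(S)$, and to exploit that these elements are projections, so that pointwise convergence on this family is simultaneously necessary and sufficient for convergence in each space.
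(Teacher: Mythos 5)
Your proposal is correct and follows essentially the same route as the paper: the bijection $\Phi$ comes from Propositions \ref{prop:filter.from.spectre} and \ref{prop:spectre.from.filter}, continuity of $\Phi$ is immediate from pointwise convergence on the projections $s_\alpha p_A s_\alpha^*$, and your appeal to the uniform norm bound on characters plus totality of these projections is precisely the ``standard $\varepsilon/3$ argument'' the paper uses for continuity of $\Phi^{-1}$. The extra details you supply (mutual inverses, values lying in $\{0,1\}$) are correct fillings-in of what the paper leaves implicit.
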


\begin{proof}
	Putting together Propositions \ref{prop:filter.from.spectre} and \ref{prop:spectre.from.filter}, we have a bijection $\Phi:\widehat{\Delta}\to\ftight$ given by
	\[\Phi(\varphi)=\{(\alpha,A,\alpha)\in E(S)\,|\,\varphi(s_{\alpha}p_As_{\alpha}^*)=1\}.\]
	
	Since the topologies on $\widehat{\Delta}$ and $\ftight$ are both given by pointwise convergence, it follows that $\Phi$ is continuous. A standard $\varepsilon/3$ argument shows that $\Phi^{-1}$ is continuous.		
\end{proof}

\section{An Example}
\label{section:example.definitions.different}

In this section, we present an example of a labelled space whose C*-algebra given by Definition \ref{def:c*-algebra.labelled.space.v3} is a non-trivial quotient of the C*-algebra considered in a preprint of \cite{ETS:9992065} (see Remark \ref{remark.condition.iv}). To see this, we construct a representation of the C*-algebra based on this alternative definition such that the images of the generating projections and partial isometries do not satisfy the additional relations given in Definition \ref{def:c*-algebra.labelled.space.v3}.

From now on, fix a labelled space $\lspace$ whose labelled graph is left-resolving. Let $Y$ be an infinite set such that $\card{Y}\geq\max\{\card{\dgraph^0},\card{\dgraph^1}\}$. For each $e\in\dgraph^1$, let $E_e$ be a copy of $Y$ and, for each $v\in\dgraph^0_{sink}$, let $D_v$ be also a copy of $Y$, so that all copies of $Y$ are pairwise disjoint. For each $v\in\dgraph^0\setminus\dgraph^0_{sink}$, define\footnote{Here, we use the square union symbol to stress that the union is disjoint.} $D_v = \bigsqcup_{e\in s^{-1}(v)}E_e$. It is easy to see that the $D_v$ are pairwise disjoint. Now, for each $e\in\dgraph^1$, choose a bijection $\newf{h_e}{D_{r(e)}}{E_e}$.\footnote{In the case $D_{r(e)}=E_e$ (that is, $s(e)=r(e)$ and $e$ is the unique arrow with source $r(e)$), $h_e$ cannot be the identity function.} For each letter $a\in\alf$, the union $\cup_{e\in \lbf^{-1}(a)}D_{r(e)}$ is disjoint, since the labelled graph is left-resolving. Thus, we can define a bijection $\newf{h_a}{\bigsqcup_{e\in \lbf^{-1}(a)}D_{r(e)}}{\bigsqcup_{e\in \lbf^{-1}(a)}E_{e}}$ by gluing the functions $h_e$, where $\lbf(e)=a$. Finally, set $X=\bigsqcup_{v\in\dgraph^0}D_v$.

Consider the Hilbert space $\ell^2(X)$ e let $\{\delta_x\}_{x\in X}$ be its canonical basis.

For each letter $a\in\alf$, define
\[\newfd{S_a}{\ell^2(X)}{\ell^2(X)}{\delta_x}{[x\in\bigsqcup_{e\in \lbf^{-1}(a)}D_{r(e)}]\delta_{h_a(x)},}\]
recalling that $[\ ]$ represents the boolean function that returns $1$ if the argument is true and $0$ otherwise. In this way, $S_a$ is a partial isometry with $\ell^2(\bigsqcup_{e\in \lbf^{-1}(a)}D_{r(e)})$ as its initial space and $\ell^2(\bigsqcup_{e\in \lbf^{-1}(a)}E_{e})$ as its final space.

For each $A\subseteq\dgraph^0$, define
\[\newfd{P_A}{\ell^2(X)}{\ell^2(X)}{\delta_x}{[x\in\bigsqcup_{v\in A}D_{v}]\delta_{x}.}\]
Thus, $P_A$ is the projection onto $\ell^2(\bigsqcup_{v\in A}D_{v})$.

\begin{proposition}\label{prop:representation.of.def.v2} The operators $\{S_a\}_{a\in\alf}$ and $\{P_A\}_{A\in\acf}$ satisfy the conditions of Definition \ref{def:c*-algebra.labelled.space.v3}, replacing item (iv) with
\begin{enumerate}[(iv)]
	\item For every $A\in\acf$ such that $0<\card{\lbf(A\dgraph^1)}<\infty$ and $A\cap\dgraph^0_{sink}=\emptyset$,
	\[p_A=\sum_{a\in\lbf(A\dgraph^1)}s_ap_{r(A,a)}s_a^*\]
\end{enumerate}
(see Remark \ref{remark.condition.iv}).
\end{proposition}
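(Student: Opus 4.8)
The plan is to verify the four families of relations of Definition~\ref{def:c*-algebra.labelled.space.v3} (with item~(iv) replaced as in Remark~\ref{remark.condition.iv}) by direct computation on the canonical basis $\{\delta_x\}_{x\in X}$, using throughout the pairwise disjointness of the sets $D_v$ and $E_e$ together with the left-resolving hypothesis on the labelled graph. First I would dispose of~(i): since the $D_v$ are pairwise disjoint, $\bigsqcup_{v\in A\cap B}D_v=\bigl(\bigsqcup_{v\in A}D_v\bigr)\cap\bigl(\bigsqcup_{v\in B}D_v\bigr)$, $\bigsqcup_{v\in A\cup B}D_v=\bigl(\bigsqcup_{v\in A}D_v\bigr)\cup\bigl(\bigsqcup_{v\in B}D_v\bigr)$ and $\bigsqcup_{v\in\emptyset}D_v=\emptyset$; as $P_A$ is exactly the orthogonal projection onto $\ell^2\bigl(\bigsqcup_{v\in A}D_v\bigr)$, the identities $P_{A\cap B}=P_AP_B$, $P_{A\cup B}=P_A+P_B-P_{A\cap B}$ and $P_\emptyset=0$ follow from the corresponding pointwise identities for indicator functions.

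The key structural observation, on which the remaining relations rest, is that because the labelled graph is left-resolving the map $e\mapsto r(e)$ restricts to a \emph{bijection} from $\lbf^{-1}(a)$ onto $r(a)$ for each $a\in\alf$: if $\lbf(e)=\lbf(e')=a$ and $r(e)=r(e')$, then $e,e'\in r^{-1}(r(e))$ forces $e=e'$, and the image is $\{r(e):\lbf(e)=a\}=r(a)$. Consequently $\bigsqcup_{e\in\lbf^{-1}(a)}D_{r(e)}=\bigsqcup_{v\in r(a)}D_v$, so the initial space of $S_a$ is $\ell^2\bigl(\bigsqcup_{v\in r(a)}D_v\bigr)=\mathrm{ran}\,P_{r(a)}$, giving $S_a^*S_a=P_{r(a)}$. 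For $b\neq a$, the final space of $S_a$ lies in $\ell^2\bigl(\bigsqcup_{e\in\lbf^{-1}(a)}E_e\bigr)$ and that of $S_b$ in $\ell^2\bigl(\bigsqcup_{e\in\lbf^{-1}(b)}E_e\bigr)$; since $\lbf^{-1}(a)\cap\lbf^{-1}(b)=\emptyset$ and the $E_e$ are pairwise disjoint, these spaces are orthogonal and $S_b^*S_a=0$. This settles~(iii).

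For~(ii), I would evaluate $P_AS_a$ and $S_aP_{r(A,a)}$ on a basis vector $\delta_x$. If $x$ lies outside the initial space $\bigsqcup_{v\in r(a)}D_v$ of $S_a$, both sides vanish; otherwise $x\in D_{r(e)}$ for the unique $e$ with $\lbf(e)=a$ and $x\in D_{r(e)}$, and $h_a(x)=h_e(x)\in E_e\subseteq D_{s(e)}$ (note $s(e)$ is not a sink). A direct computation then gives $P_AS_a\delta_x=[\,s(e)\in A\,]\,\delta_{h_a(x)}$ and $S_aP_{r(A,a)}\delta_x=[\,r(e)\in r(A,a)\,]\,\delta_{h_a(x)}$, so the relation amounts to the equivalence $s(e)\in A\iff r(e)\in r(A,a)$ for every edge $e$ with $\lbf(e)=a$. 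The forward implication is immediate; for the converse, any $f\in\dgraph^1$ with $\lbf(f)=a$, $r(f)=r(e)$ and $s(f)\in A$ must equal $e$ by left-resolving, whence $s(e)\in A$.

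Finally, for~(iv), fix $A\in\acf$ with $0<\card{\lbf(A\dgraph^1)}<\infty$ and $A\cap\dgraph^0_{sink}=\emptyset$. Using the description of the initial space of $S_a$ and the fact that $h_a$ carries each $D_{r(e)}$ onto $E_e$, one checks that $S_aP_{r(A,a)}S_a^*$ is the orthogonal projection onto $\ell^2\bigl(\bigsqcup_{e\,:\,\lbf(e)=a,\,s(e)\in A}E_e\bigr)$. As $a$ ranges over $\lbf(A\dgraph^1)$ these edge sets are pairwise disjoint (an edge has a unique label) and their union is exactly $\{e\in\dgraph^1:s(e)\in A\}$; hence the projections $S_aP_{r(A,a)}S_a^*$ are mutually orthogonal, and since no vertex of $A$ is a sink, $\bigsqcup_{e\,:\,s(e)\in A}E_e=\bigsqcup_{v\in A}\bigsqcup_{e\in s^{-1}(v)}E_e=\bigsqcup_{v\in A}D_v$. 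Therefore $\sum_{a\in\lbf(A\dgraph^1)}S_aP_{r(A,a)}S_a^*$ is the projection onto $\ell^2\bigl(\bigsqcup_{v\in A}D_v\bigr)=\mathrm{ran}\,P_A$, i.e.\ equals $P_A$. None of these steps is deep; the only real care needed is in matching up the index sets of edges, vertices and letters, and the two places where left-resolving is genuinely used are the bijection $e\mapsto r(e)$ and the equivalence in checking~(ii) --- that equivalence is the step I expect to be least mechanical. (The point of the construction, to be exploited afterwards, is precisely that these operators then \emph{fail} the stronger form of~(iv) in Definition~\ref{def:c*-algebra.labelled.space.v3} whenever $A$ meets $\dgraph^0_{sink}$ in a set containing no non-empty member of $\acf$, because of the extra copies $D_v$ attached to sinks.)
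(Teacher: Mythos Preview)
Your proof is correct and follows essentially the same approach as the paper's. The only minor difference is in the treatment of~(iv): the paper first uses~(ii) to rewrite $S_aP_{r(A,a)}S_a^*=P_AS_aS_a^*$ and then reduces the relation to the inequality $P_A\leq\sum_{a\in\lbf(A\dgraph^1)}S_aS_a^*$, whereas you compute each $S_aP_{r(A,a)}S_a^*$ directly as the projection onto $\ell^2\bigl(\bigsqcup_{e\,:\,\lbf(e)=a,\,s(e)\in A}E_e\bigr)$ and then identify the disjoint union; both routes amount to the same computation and rely on the same equivalence $s(e)\in A\iff r(e)\in r(A,a)$ from left-resolving.
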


\begin{proof}
	We already know that $S_a$ is a partial isometry and $P_A$ is a projection. We only show items (ii) and (iv), since (i) and (iii) are trivial.
	
	Observe that
	\[P_AS_a(\delta_x) = [x\in\bigsqcup_{e\in \lbf^{-1}(a)}D_{r(e)}][h_a(x)\in\bigsqcup_{v\in A}D_v]\delta_{h_a(x)}\]
	and
	\[S_aP_{r(A,a)}(\delta_x) = [x\in\bigsqcup_{v\in r(A,a)}D_{v}][x\in\bigsqcup_{e\in \lbf^{-1}(a)}D_{r(e)}]\delta_{h_a(x)}.\]
	Thus, to see (ii), it suffices to show that $[h_a(x)\in\bigsqcup_{v\in A}D_v]=[x\in\bigsqcup_{v\in r(A,a)}D_{v}]$ whenever $[x\in\bigsqcup_{e\in \lbf^{-1}(a)}D_{r(e)}]=1$. Indeed, if $x\in\bigsqcup_{v\in r(A,a)}D_{v}$, then there exists $e\in\lbf^{-1}(a)$ with $s(e)\in A$ such that $x\in D_{r(e)}$. Therefore, $h_a(x)\in E_e\subseteq D_{s(e)} \subseteq \bigsqcup_{v\in A}D_v$. On the other hand, since $x\in\bigsqcup_{e\in \lbf^{-1}(a)}D_{r(e)}$, then there exists $e\in\lbf^{-1}(a)$ such that $x\in D_{r(e)}$ and, hence, $h_a(x)\in E_e$. Thus, if $h_a(x)\in\bigsqcup_{v\in A}D_v$, then $E_e\subseteq \bigsqcup_{v\in A}D_v$, which says that $s(e)\in A$. In other words, $r(e)\in r(A,a)$, showing that $x\in D_{r(e)}\subseteq \bigsqcup_{v\in r(A,a)}D_{v}$.
	
	Let $A\in\acf$ be such that $\lbf(A\dgraph^1)$ is finite and $A\cap\dgraph^0_{sink}=\emptyset$. Now that (ii) has been established, proving (iv) is equivalent to showing that $S_aS_a^*$ and $S_bS_b^*$ are orthogonal projections for $a,b\in\lbf(A\dgraph^1)$ with $a\neq b$, and \[P_A\leq\sum_{a\in\lbf(A\dgraph^1)}S_aS_a^*.\]
	By item (iii), it is clear that $S_aS_a^*$ and $S_bS_b^*$ are orthogonal if $a\neq b$. The operator $P_A$ is the projection onto $\ell^2(\bigsqcup_{v\in A}D_{v})$ and the operator $\sum_{a\in\lbf(A\dgraph^1)}S_aS_a^*$ is the projection onto $\ell^2(\bigsqcup_{a\in\lbf(A\dgraph^1)}\bigsqcup_{e\in\lbf^{-1}(a)}E_{e})$. Since $A\cap\dgraph^0_{sink}=\emptyset$, then $\bigsqcup_{v\in A}D_{v} = \bigsqcup_{v\in A}\bigsqcup_{e\in s^{-1}(v)}E_e$ and it is clearly contained in $\bigsqcup_{a\in\lbf(A\dgraph^1)}\bigsqcup_{e\in\lbf^{-1}(a)}E_e$. Therefore,
	$$P_A\leq\sum_{a\in\lbf(A\dgraph^1)}S_aS_a^*.$$
\end{proof}

This Proposition ensures that there exists a homomorphism from $C^*\lspace$ (with the alternative item (iv)) to $\mathcal{B}(\ell^2(X))$ which sends $s_a$ to $S_a$ and $p_A$ to $P_A$. To simplify the writing, we refer to Definition  \ref{def:c*-algebra.labelled.space.v3} with the alternative item (iv) as the \emph{alternative} definition.

\begin{remark}
	We observe that there are more relations in Definition \ref{def:c*-algebra.labelled.space.v3} than in the alternative. Thus, every representation of the C*-algebra given by Definition \ref{def:c*-algebra.labelled.space.v3} is also a representation of the one given by the alternative (such as the representation in Section \ref{section:representations}, for instance). The representation stated above is not, in general, a representation of the C*-algebra given by Definition \ref{def:c*-algebra.labelled.space.v3}.
\end{remark}

Now, we are ready to see the promised example.

\begin{example} \label{example:different.definitions}
	Consider the graph below.
	
	\begin{center}
		\resizebox{6cm}{!}{
			\begin{tikzpicture}[->,>=stealth',shorten >=1pt,auto,thick,main node/.style={circle,draw,thick,font=\bfseries}]
			\node[main node] (1) {$v_1$};
			\node[main node] (2) [left=2.5cm of 1] {$v_2$};
			\node[main node] (3) [right=2.5cm of 1] {$v_3$};
			
			\draw[->] (2) to [bend right] node [below] {$e_2$} (1);
			\draw[->] (1) to [bend right] node [above] {$e_1$} (2);
			\draw[->] (1) to node [above] {$e_3$} (3);
			\end{tikzpicture}
		}
	\end{center}
	
	For each arrow, assign the label $a$ and consider the family \[\acf=\{\emptyset, \{v_1\}, \{v_2,v_3\}, \{v_1,v_2,v_3\}\},\] which is easily seen to be accommodating and closed under relative complements.
	
	The C*-algebra of this labelled space (under any of the two given definitions) is generated by four elements: $s_a$, $p_{\{v_1\}}$, $p_{\{v_2, v_3\}}$ and $p_{\{v_1, v_2, v_3\}}$. Clearly, using (i), (ii) and (iii), we see that $p_{\{v_1\}}+p_{\{v_2,v_3\}}=p_{\{v_1, v_2, v_3\}} = s_a^*s_a = 1$.
	
	Observe that $A=\{v_1,v_2,v_3\}$ does not satisfy $A\cap\dgraph^0_{sink}=\emptyset$, and there is no $B\in\acf$ such that $\emptyset\neq B\subseteq A\cap\dgraph^0_{sink}$. This means there is a relation in Definition \ref{def:c*-algebra.labelled.space.v3} that does not appear in the alternative definition.
	
	Now, suppose we are under Definition \ref{def:c*-algebra.labelled.space.v3}. Since $\{v_1,v_2,v_3\}$ satisfies the conditions of item (iv), we conclude that $s_as_a^*=1$. Furthermore, $\{v_1\}=r(\{v_1,v_2\},a)$, $\{v_1,v_2\} = r(\{v_1\},a)$ and $\{v_1,v_2,v_3\}=r(\{v_1,v_2,v_3\},a)$, which says that every set in $\acf$ is the relative range (by $a$) of another set in $\acf$. Thus, for every $A\in\acf$, $s_ap_A=p_Bs_a$ for some $B\in\acf$. Therefore, every element of the form $s_{\alpha}p_A s_{\alpha}^*$ is equal to $p_B$ for some $B\in\acf$, from where we conclude that $p_{\{v_1\}}$ and $p_{\{v_2,v_3\}}$ generate $\Delta\lspace$. Hence, the spectrum of $\Delta\lspace$ is a set with two points.
	
	On the other hand, under the alternative definition, we can consider the representation developed in this section applied to this labelled space to obtain $S_aS_a^*=P_{\{v_1,v_2\}}\neq P_{\{v_1,v_2,v_3\}} = 1$. In this way, we see that $p_{\{v_1\}}$, $p_{\{v_2,v_3\}}$, $p_{\{v_1,v_2,v_3\}}$ and $s_as_a^*$ are all different and non-zero. Therefore, in this case, the spectrum of $\Delta\lspace$ possesses more than two points.
	
	This shows that $C^*\lspace$ given by Definition \ref{def:c*-algebra.labelled.space.v3} is a non-trivial quotient of the C*-algebra given by the alternative definition. Furthermore, we conclude that the diagonal C*-algebras are different and, hence, the spectrum of the diagonal C*-algebra of the alternative definition is not homeomorphic to the tight spectrum of the inverse semigroup associated with the labelled space.
\end{example}

\bibliographystyle{abbrv}
\bibliography{labelledgraphs_ref}

\end{document}